\newtheorem{theorem}{Theorem}[section]
\newtheorem{lemma}{Lemma}[section]
\newtheorem{assumption}[theorem]{Assumption}
\numberwithin{equation}{section}
\newcommand{\al}{\alpha}
\newcommand{\fy}{\varphi}
\renewcommand{\d}{{\,\rm d}}
\def\Dal{{\partial_t^\al}}
\def\Om{\Omega}
\def\II{(\Om)}
\def\dH#1{\dot H^{#1}(\Omega)}
\def\bDal{\bar\partial_\tau^\alpha}
\def\L2Om{{L^2(\Omega)}}
\def \E{E}
\def\tu{{u_\gamma}}
\def\tud{{u_\gamma^\delta}}
\def\R{\mathbb{R}}
 \def\tU{\tilde{U}}
  \def\tuh{u_{\gamma,h}}
\title[Backward problem for subdiffusion with time-dependent coefficients]{Stability and numerical analysis of backward problem for subdiffusion with time-dependent coefficients}
 \author[Zhengqi Zhang]{$\,\,$Zhengqi Zhang$\,\,$}
\address{Department of Applied Mathematics,
The Hong Kong Polytechnic University, Kowloon, Hong Kong.}
\email {19076082r@connect.polyu.hk}
\author[Zhi Zhou]{$\,\,$Zhi Zhou$\,$}
\address{Department of Applied Mathematics,
The Hong Kong Polytechnic University, Kowloon, Hong Kong}
\email {{zhizhou{\it @\,}polyu.edu.hk}}
\keywords{backward subdiffusion, time-dependent coefficients, stability, quasi-boundary value method, 
finite element method, convolution quadrature, error analysis}
\begin{document}

\maketitle

\begin{abstract}
Our aim is to study the backward problem, i.e. recover the initial data from the terminal observation, of the 
subdiffusion with time dependent coefficients. First of all, by using the smoothing property of solution operators
and a perturbation argument of freezing the diffusion coefficients, we show a stability estimate in Sobolev spaces, under some smallness/largeness condition on the terminal time.
Moreover, in case of noisy observation, we apply a quasi-boundary value method to regularize the problem and then show the convergence of the regularization scheme.
Finally, to numerically reconstruct the initial data, we propose a completely discrete scheme by applying the finite element method in space and backward Euler 
convolution quadrature in time. 
An \textsl{a priori} error estimate is then established. The proof is heavily built on a 
perturbation argument dealing with time dependent coefficients
and some nonstandard error estimates for the direct problem. 
The error estimate gives a useful guide for balancing discretization parameters, regularization parameter 
and noise level. Some numerical experiments are presented to illustrate our theoretical results.


\end{abstract}

\section{Introduction}
Let $\Omega\subset\mathbb{R}^d $ ($d=1,2,3$) be a
convex polyhedral domain
with boundary $\partial\Omega$. We are interested in the fractional evolution model with time-dependent coefficient:
\begin{align}\label{eqn:fde-t}
\begin{aligned}
\partial_t^\alpha u (x,t)+ \nabla \cdot(a(x,t)\nabla u) &= f(x,t),\ &&\text{in}\ \Omega\times(0,T],\\
u(x,t) &= 0,\ &&\text{on}\ \partial\Omega, \\
u(x,0)& = u_0(x),\ &&\text{in}\ \Omega, \\
\end{aligned}
\end{align}
where $T>0$ is a fixed final time, $f \in L^\infty(0,T;L^2(\Omega))$ and $u_0\in L^2(\Omega)$ are given
source term and initial data, respectively. $a(x,t)\in\R^{d\times d}$ is a symmetric matrix-valued
diffusion coefficient such that for  constants $c_0\geq 1$ and $c_1>0$
\begin{align}
  &c_0^{-1}|\xi|^2\le a(x,t)\xi\cdot\xi \le c_0 |\xi|^2, && \forall\, \xi\in\R^d, \,\, \forall\, (x,t)\in \Omega\times\mathbb{R}^+, \label{Cond-1-t} \\
  &|\partial_t a(x,t)|+|\nabla_xa(x,t)|+|\nabla_x\partial_t a(x,t)|\le c_1, &&\forall\, (x,t)\in \Omega\times\mathbb{R}^+. \label{Cond-2-t}
\end{align}
Here $\cdot$ and $|\cdot|$ denote the standard Euclidean inner product and norm, respectively, and $\mathbb{R}^+ = [0,\infty)$.
In \eqref{eqn:fde-t},
$\Dal u(t)$ denotes the Caputo fractional derivative in time $t$ of order $\alpha\in(0,1)$ \cite[p. 70]{KilbasSrivastavaTrujillo:2006}
\begin{align*}
   \Dal u(t)= \frac{1}{\Gamma(1-\alpha)}\int_0^t(t-s)^{-\alpha} \tfrac{\partial}{\partial s} u(s)\d s.
\end{align*}
where $\Gamma(z) = \int_0^\infty s^{z-1}e^{-s}\d s$(for $\Re(z)>0$) denotes the Euler's Gamma function. As the order $\al\to 1^-$, the fractional derivative converge to the standard first derivative $u'$ for sufficiently smooth $u$. 
Due to its numerous applications in physics, engineering, biology, and finance, there has been a surge in interest in fractional / nonlocal models in recent years. In particular, the time-fractional diffusion equation models the mean squared displacement of particles grows only sublinearly with time, compared to classical diffusion process growing linearly in time. 
The model are extensively used to describe subdiffusion process in nature, such as highly
heterogeneous aquifers in media and fractal geometry.
For interested readers, see a long list of applications of fractional models discovered from biology and physics in \cite{Metzler:2014,MetzlerKlafter:2000}.

Inverse problems for subdiffusion models have been extensively studied and there has already been a vast literature; we recommend review papers \cite{JinRundell:2015, LiLiuYamamoto:2019b, LiYamamoto:2019a,LiuLiYamamoto:2019} and reference therein. In this paper, we focus on backward problem for the subdiffusion model \eqref{eqn:fde-t}: to recover
the  initial data $u_0(x)$ with $x\in\Omega$  from terminal observation 
\begin{equation*}
u(x,T) = g(x),~~\text{for all }x\in \Omega.
\end{equation*}

In recent years, the backward subdiffusion problem has received a lot of attention.
All existing works are for the case that the coefficients are independent of time, a.e. $a(x,t) \equiv a(x)$.
The uniqueness and some stability estimate can be found in the pioneer work \cite{SakamotoYamamoto:2011}. See  also
\cite{LiuYamamoto:2010,WangLiu:TV,YangLiu:2013,WeiWang:2014,Nho_H_o_2019,Tuan:2020,WeiXian:2019}
for various kinds of regularization methods, and \cite{ZhangZhou:2020} for error analysis of numerical reconstruction by fully discrete schemes.
The analysis in aforementioned works heavily relies on the asymptotic behaviors of Mittag--Leffler functions, or equivalently the smoothing properties
of solution operators. 
Unfortunately, this strategy cannot be directly extended to
subdiffusion models with time dependent coefficients. 
Note that the backward problem for the parabolic equation with time-dependent coefficient 
has been intensively studied. 
However, the studies of fractional diffusion model \eqref{eqn:fde-t} is much more challenging
since many useful mathematical tools, including product rule and chain rule, are not directly applicable. 

There have been some existing works for the direct problem of time-fractional model \eqref{eqn:fde-t}.
For time-dependent elliptic operators or nonlinear problems, energy arguments
\cite{VergaraZacher:2015} or perturbation arguments \cite{KimKimLim:2017} can be used to show
existence and uniqueness of the solution. However, more refined stability estimates, needed for
numerical analysis of nonsmooth problem data, often have to be derived separately.
Mustapha \cite{Mustapha:2018} analyzed the spatially semidiscrete Galerkin FEM approximation of problem \eqref{eqn:fde-t} 
using a novel energy argument, and established optimal-order convergence rates for both smooth and nonsmooth initial data. 
See also \cite{McLeanMustaphaAliKnio:2019fcaa,McLeanMustaphaAliKnio:2020,McLeanMustaphaAliKnio:2020b} for time-fractional advection diffusion equation. 
In \cite{JinLiZhou:2019}, a perturbation argument of freezing the diffusion coefficients was proposed to analyze the model \eqref{eqn:fde-t} and its numerical treatment.
The argument was then modified and adapted to the error analysis of high-order discretization scheme in \cite{JinLiZhou:2020nm}.
However, the analysis for the backward problem is still missing in the literature. 

The first contribution of the paper is to develop a conditional stability of the backward problem in Sobolev spaces. 
Under some assumptions on diffusion coefficients and some smallness/largeness conditions
on terminal time $T$, there holds the Lipschitz stability (Theorems \ref{thm:stab-small} and \ref{thm:stab-big})
\begin{equation}\label{eqn:stab-intro}
 \|  u_0  \|_{L^2\II} \le c(1+T^\alpha) \|  u(T) \|_{H^2\II}. 
\end{equation}
The proof heavily relies on several \textsl{a priori} estimates of the direct problem,
 the smoothing properties of solution operators with frozen diffusion coefficients,
and a perturbation argument. The stability estimate \eqref{eqn:stab-intro} plays a key role in the 
analysis of regularization scheme and completely discrete approximation.

In practice, the observational data often involves random noise. In this work, we denote the empirical observation by $g_\delta$
and assume that it is noisy with a level $\delta>0$ in the sense that
\begin{equation}\label{eqn:noisy data}
\|g_\delta-g\|_{L^2\II} = \delta.
\end{equation}
According to the stability estimate \eqref{eqn:stab-intro}, we know the backward problem is mildly ill-posed,
and it is necessary to apply some regularization in case of noisy data.
In this work, we propose a quasi-boundary method for regularization
and analyze convergence of the regularized solution.
Specifically, under the assumptions in the stability estimate \eqref{eqn:stab-intro},
if $\| u_0 \|_{\dH q}\le c$ with $q\in (0,2]$, we prove that (Theorem \ref{thm:err-reg})
\begin{equation*}
 \| \tud(0) - u_0  \|_{L^2\II} \le c \,\Big(\delta \gamma^{-1} +  \gamma^{\frac q2}\Big).
\end{equation*}
Here $\gamma$ denotes the regularization parameter and $\tud(0)$ denotes the reconstruction via the 
regularization scheme. Then the optimal convergence rate (in terms of noise level)
is $O(\delta^{\frac{q}{q+2}})$ provided that $\gamma \sim \delta^{\frac2{q+2}}$.
Moreover, for $u_0\in L^2\II$, there holds
\begin{equation*}
\| \tud(0) -  u_0\|_{L^2\II} \rightarrow 0\quad \text{as}~~\delta,~\gamma\rightarrow0 
~~\text{and}~~\frac\delta\gamma\rightarrow0.
\end{equation*}

The next contribution of this paper is to develop a fully discrete scheme with thorough error analysis.
To numerically recover the initial data, we discretize the proposed regularization scheme 
by using piecewise linear finite element method (FEM) in space with spatial mesh size $h$, 
and  backward Euler convolution quadrature scheme (CQ-BE) in time with temporal step size $\tau$.
Then the numerical discretization introduces additional discretization error.
We establish an \textsl{a priori} error bound for the fully discrete scheme.
This estimate provides a useful guideline to choose suitable discretization parameters $h$ and $\tau$ and regularization parameter $\gamma$
according to the \textsl{a priori} known noise level $\delta$.
In particular, let $U_{h,\gamma}^{0,\delta}$ be the numerical reconstruction of initial condition.
Under those assumptions in the stability estimate \eqref{eqn:stab-intro},
we show that (Theorem \ref{thm:fully-err})
\begin{equation*}
\|U_{h,\gamma}^{0,\delta}-u_0\|_{L^2\II}\le c\Big(\gamma^\frac q2+ \delta\gamma^{-1} +
h^2\gamma^{-1}+\tau |\log \tau| (h^2\gamma^{-1}+1)  \Big),
\end{equation*}
if $\| u_0 \|_{\dH q}\le c$ with $q\in (0,2]$. Then with the choice
$\gamma \sim \delta^{\frac{2}{q+2}}$, $h\sim \delta^\frac 12$ and $\tau |\log \tau|  \sim \delta^{\frac{q}{q+2}}$,
we obtain the optimal approximation error of order 
$O(\delta^{\frac{q}{q+2}})$.
Moreover, for $u_0\in L^2\II$, there holds
\begin{equation*}
\| U_{h,\gamma}^{0,\delta} -  u_0\|_{L^2\II} \rightarrow 0\quad \text{as}~~\delta,\gamma,h,\tau\rightarrow0,
~~\frac\delta\gamma\rightarrow0~~\text{and}~~ \frac{h^2}{\gamma} \rightarrow0.
\end{equation*}
In this proof, we combine several useful tools, including the error analysis of the direct 
problem with respect to the problem data \cite{JinLiZhou:2019,JinLiZhou:2020nm},
the smoothing properties of discrete solution operators with frozen diffusion coefficients \cite{ZhangZhou:2020},
and the analysis of the conditional stability estimate \eqref{eqn:stab-intro} in the discrete sense.

The rest of the paper is organized as follows. In section \ref{sec:2} we provide some preliminary results 
about solution regularity, smoothing properties of solution operators and derive conditional stability of the inverse problem. 
In section \ref{sec:3} we discuss the regularization scheme by quasi-boundary value method. In section \ref{sec:4} we propose and analyze a fully discrete scheme for solving the backward problem.
Finally, in section \ref{sec:5} we present some numerical examples to illustrate and complete the theoretical analysis. 

Here we introduce some notations used throughout the paper. Under  conditions \eqref{Cond-1-t}--\eqref{Cond-2-t}, we define the abstract time-dependent elliptic
operator: 
$$
A(t)\phi=- \nabla\cdot(a(x,t)\nabla \phi)\quad \text{with} ~~\text{Dom}(A(t))=H^1_0(\Omega)\cap H^2(\Omega)
$$
 for all $t\in[0,T]$. By the complex
interpolation method \cite{Triebel:1978}, this implies
$$
\text{Dom}(A(t)^\gamma)=\dot H^{2\gamma}(\Omega)=(L^2(\Omega),H^1_0(\Omega)\cap H^2(\Omega))_{[\gamma]} ,\quad\forall\, t\in[0,T], \,\,\,\forall\,\gamma\in[0,1],
$$
Equivalently, it can be defined via spectral decomposition of
the operator $A(t)$ \cite[Chapter 3]{Thomee:2006}. Let $\{(\lambda_j,\varphi_j)\}_{j=1}^n$ be the eigenpairs of $A(t_*)$ for a fixed $t_* \in [0,T]$ with multiplicity
counted and $\{\varphi_j\}_{j=1}^\infty$ be an orthonormal basis in $L^2(\Omega)$. Then the Hilbert space $\dot{H}^{\gamma}
(\Omega)$ can be equivalently defined as
\begin{equation*}
  \dot H^{\gamma}(\Omega) = \Big\{v\in L^2(\Omega): \sum_{j=1}^\infty \lambda_j^{\frac {\gamma}{2}}(v,\varphi_j)^2<\infty\Big\}.
\end{equation*}
For $\gamma\in[0,2]$ we also denote by $\dot H^{-\gamma}(\Omega)$ the dual space of $\dot H^{\gamma}(\Omega)$. Then the norm
of $\dot H^{-\gamma}(\Omega)$ satisfies
\begin{equation*}
\|v\|_{\dot H^{-\gamma}(\Omega)}
\sim \|A(t)^{-\frac{\gamma}{2}}v\|_{L^2(\Omega)}\quad\forall\, v\in\dot H^{-\gamma}(\Omega),\,\,\forall\, t\in[0,T].
\end{equation*}

\section{Stability of the backward subdiffusion in Sobolev spaces} \label{sec:2}
First we recall basic properties of the subdiffusion model with a time-independent
diffusion coefficient, i.e., $a(x,t_*)$ for some $t_*\ge0$. Accordingly, consider the problem
\begin{equation}\label{PDE-independent}
\Dal u(t) + A(t_*)u(t) = f(t)\ \,\,\, \forall t\in(0,T],
\quad \mbox{with }u(0)=u_0.
\end{equation}
By means of Laplace transform,
the solution $u(t)$ can be represented by \cite[eq. (2.13)]{JinLiZhou:2019}
\begin{align}\label{eqn:Sol-expr-u-const}
u(t)= F(t;t_*)u_0 + \int_0^t E(t-s;t_*) f(s) \d s ,
\end{align}
where the solution operators $F(t;t_*)$ and $E(t;t_*)$ are defined by
\begin{equation}\label{eqn:sol-op-1}
 F(t;t_*) = \frac{1}{2\pi\mathrm{i}} \int_{\Gamma_{\theta,\kappa}} e^{zt} z^{\alpha-1} (z^\alpha
 + A(t_*))^{-1}  \,\d z,~~\text{and}~~
E(t;t_*) = \frac{1}{2\pi\mathrm{i}} \int_{\Gamma_{\theta,\kappa}} e^{zt} (z^\alpha +A(t_*))^{-1}\,\d z.
\end{equation}
with integration over a contour $\Gamma_{\theta,\kappa}\subset\mathbb{C}$ (oriented with an increasing imaginary part):
\begin{equation*}
  \Gamma_{\theta,\kappa}=\left\{z\in \mathbb{C}: |z|=\kappa, |\arg z|\le \theta\right\}\cup
  \{z\in \mathbb{C}: z=\rho e^{\pm\mathrm{i}\theta}, \rho\ge \kappa\} .
\end{equation*}
Throughout, we fix $\theta \in(\frac{\pi}{2},\pi)$ so that $z^{\al} \in \Sigma_{\al\theta}\subset
\Sigma_{\theta}:=\{0\neq z\in\mathbb{C}: {\rm arg}(z)\leq\theta\},$ for all $z\in\Sigma_{\theta}$.

The next lemma gives smoothing properties and asymptotics of  $F(t;t_*)$ and $E(t;t_*)$. The proof follows from
the resolvent estimate\cite[Example 3.7.5 and Theorem 3.7.11]{ArendtBattyHieber:2011}:
\begin{equation} \label{eqn:resol}
  \| (z+A)^{-1} \|\le c_\phi (|z|^{-1},\lambda^{-1})   \quad \forall z \in \Sigma_{\phi},
  \,\,\,\forall\,\phi\in(0,\pi) ,
\end{equation}
where $\|\cdot\|$ denotes the operator norm from $L^2(\Omega)$ to $L^2(\Omega)$,
and $\lambda$ denotes the smallest eigenvalue of $-\Delta$ with homogeneous Dirichlet boundary condition.
The proof of (i) and (ii) were given in \cite[Theorems 6.4 and 3.2]{Jin:2021book},
and (iii) were proved by Sakamoto  and Yamamoto in \cite[Theorem 4.1]{SakamotoYamamoto:2011}.

\begin{lemma}\label{lem:op}
Let $F(t;t_*)$ and $E(t;t_*)$ be the solution operators defined in \eqref{eqn:sol-op-1} for any $t_*\ge0$
Then they satisfy the following properties for all $t>0$
\begin{itemize}
\item[$\rm(i)$] $\|A(t_*) F (t;t_*)v\|_{L^2\II} +   
t^{1-(2-k)\alpha}  \| A(t_*)^{k} E (t;t_*)v  \|_{L^2\II} \le c  t^{-\alpha} \|v\|_{L^2\II}$ with $k=1,2$;
\item[$\rm(ii)$] $\|F(t;t_*)v\|_{L^2\II}+ t^{1-\alpha}\|E(t;t_*)v\|_{L^2\II} \le c \min(1,  t^{-\alpha}) \|v\|_{L^2\II}$;
\item[$\rm(iii)$] $\| F(t; t_*)^{-1} v \|_{L^2\II} \le c (1+ t^\alpha) \| v \|_{\dH2}$ for all $v\in \dH 2$.
\end{itemize}
The constants in all above estimates are uniform in $t$, but they are only dependent of $t_*$ and $T$.
\end{lemma}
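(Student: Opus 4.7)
The whole lemma is a compendium of resolvent/Mittag--Leffler estimates; since the operators $F(t;t_*)$ and $E(t;t_*)$ are associated with the time-independent operator $A(t_*)$, the proof strategy is classical and I would split it into two independent arguments, one based on contour deformation of \eqref{eqn:sol-op-1} for (i) and (ii), the other based on spectral decomposition for (iii).

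For (i) and (ii) my plan is to parametrize $\Gamma_{\theta,\kappa}$ with $\kappa := 1/t$ (this is the standard choice for short-time smoothing estimates), and then bound the integrands pointwise. The key algebraic observation is the resolvent identity
\[
A(t_*)(z^\alpha+A(t_*))^{-1}=I-z^\alpha(z^\alpha+A(t_*))^{-1},
\]
which, iterated, yields $\|A(t_*)^k(z^\alpha+A(t_*))^{-1}\|\le c|z|^{(k-1)\alpha}$ for $k=1,2$, together with the basic resolvent bound \eqref{eqn:resol} applied to $z^\alpha\in\Sigma_{\alpha\theta}$ so that $\|(z^\alpha+A(t_*))^{-1}\|\le c\min(|z|^{-\alpha},\lambda^{-1})$. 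Plugging these into the contour integrals and using the elementary estimate
\[
\int_{\Gamma_{\theta,1/t}}|e^{zt}||z|^{\beta}\,|dz|\le c\,t^{-\beta-1},\qquad \beta>-1,
\]
produces the short-time bounds $\|A(t_*)F(t;t_*)\|\le ct^{-\alpha}$ and $\|A(t_*)^kE(t;t_*)\|\le c t^{(2-k)\alpha-1-\alpha}=ct^{\alpha(1-k)-1}$, i.e. (i). For (ii), the $\min$ in the resolvent bound splits into the short-time estimate (use $|z|^{-\alpha}$) and the large-time estimate (use $\lambda^{-1}$, then the $z^{\alpha-1}$ weight integrates to $t^{-\alpha}$), yielding the two regimes $1$ and $t^{-\alpha}$.

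For (iii) the plan is to exploit the spectral decomposition of $A(t_*)$. Since $\{\varphi_j\}$ form an orthonormal basis of $L^2(\Omega)$, the representation \eqref{eqn:sol-op-1} together with the Laplace transform identity $\int_0^\infty e^{-zt}t^{\alpha-1}E_{\alpha,1}(-\lambda t^\alpha)\,dt=z^{\alpha-1}/(z^\alpha+\lambda)$ gives the diagonalization
\[
F(t;t_*)v=\sum_{j=1}^{\infty}E_{\alpha,1}(-\lambda_j t^\alpha)(v,\varphi_j)\varphi_j,\qquad F(t;t_*)^{-1}v=\sum_{j=1}^{\infty}\frac{(v,\varphi_j)}{E_{\alpha,1}(-\lambda_j t^\alpha)}\varphi_j,
\]
so that (iii) is equivalent to a uniform estimate on the reciprocal of the Mittag--Leffler function. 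The two classical facts I will invoke are that $E_{\alpha,1}(-x)$ is strictly positive for $x\ge 0$ (since $E_{\alpha,1}(-x)$ is completely monotone for $\alpha\in(0,1)$), and that its large-argument asymptotic $E_{\alpha,1}(-x)=\frac{1}{\Gamma(1-\alpha)\,x}+O(x^{-2})$ together with $E_{\alpha,1}(0)=1$ and continuity yield the two-sided bound
\[
\frac{1}{E_{\alpha,1}(-x)}\le c(1+x),\qquad x\ge 0.
\]
Applying this with $x=\lambda_j t^\alpha$, using $(a+b)^2\le 2(a^2+b^2)$ and the equivalent norm $\|v\|_{\dH2}^2\sim\sum_j\lambda_j^2(v,\varphi_j)^2$, delivers $\|F(t;t_*)^{-1}v\|_{L^2}^2\le c(1+t^{2\alpha})\|v\|_{\dH2}^2$, which is (iii) after taking square roots.

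The only real obstacle is the positivity/asymptotic claim for $E_{\alpha,1}(-x)$; everything else is bookkeeping on contour integrals. Since both facts are classical (complete monotonicity is in Pollard's 1948 paper, and the asymptotic expansion is standard), I would simply cite Sakamoto--Yamamoto \cite{SakamotoYamamoto:2011} and the monograph \cite{Jin:2021book} for the detailed verifications, noting that all constants depend only on $\alpha$, $\theta$, $c_0$, $c_1$ and $T$ but are independent of the freezing point $t_*$, as required by the uniformity statement at the end.
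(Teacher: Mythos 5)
The paper does not actually write out a proof of this lemma: it cites \cite{Jin:2021book} (Theorems 6.4 and 3.2) for (i)--(ii) and \cite{SakamotoYamamoto:2011} (Theorem 4.1) for (iii), and your two-pronged route --- contour deformation of \eqref{eqn:sol-op-1} with $\kappa=1/t$ for the smoothing estimates, spectral decomposition plus the Mittag--Leffler bound $1/E_{\alpha,1}(-x)\le c(1+x)$ for the inverse estimate --- is exactly what those references do. Parts (ii) and (iii) of your sketch are correct as written, and the elementary contour estimate $\int_{\Gamma_{\theta,1/t}}|e^{zt}||z|^\beta\,|\d z|\le ct^{-\beta-1}$ is the right workhorse.

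There is, however, one step that fails as stated: the iterated resolvent bound $\|A(t_*)^k(z^\alpha+A(t_*))^{-1}\|\le c|z|^{(k-1)\alpha}$ is false for $k=2$. Indeed, iterating the identity gives
\begin{equation*}
A(t_*)^2(z^\alpha+A(t_*))^{-1}=A(t_*)-z^\alpha I+z^{2\alpha}(z^\alpha+A(t_*))^{-1},
\end{equation*}
which contains the unbounded operator $A(t_*)$, so its $L^2(\Omega)\to L^2(\Omega)$ norm is infinite and cannot be ``plugged into the contour integral'' pointwise. The estimate for $\|A(t_*)^2E(t;t_*)v\|_{L^2\II}\le ct^{-1-\alpha}\|v\|_{L^2\II}$ is nevertheless true; the standard repair is to integrate the three terms above separately over $\Gamma_{\theta,1/t}$: the $A(t_*)$-term drops out because $\frac{1}{2\pi\mathrm{i}}\int_{\Gamma_{\theta,1/t}}e^{zt}\,\d z=0$ for $t>0$, the $z^\alpha I$-term produces the explicit kernel $t^{-1-\alpha}/\Gamma(-\alpha)$, and only the remaining term $z^{2\alpha}(z^\alpha+A(t_*))^{-1}$, whose norm is bounded by $c|z|^{\alpha}$, is estimated by your contour inequality with $\beta=\alpha$; all three contributions are $O(t^{-1-\alpha})$, as required by (i) with $k=2$. (Equivalently, one may first prove the $k=1$ bound and then obtain $k=2$ from $A(t_*)^2E(t;t_*)=-\partial_t\bigl(A(t_*)F(t;t_*)\bigr)$ by differentiating under the contour integral.) With this correction the argument is complete and matches the cited proofs.
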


Next, we turn to the subdiffusion with a time-dependent coefficient.
The overall proof strategy is to employ a perturbation argument \cite{JinLiZhou:2019}.
and then to properly resolve the singularity. Specifically,
for any fixed $t_*\in(0,T]$, we rewrite problem \eqref{eqn:fde-t} into
\begin{align}\label{eqn:reform}
\left\{\begin{aligned}\Dal u(t) + A(t_*)u(t) &=  (A(t_*)-A(t))u(t) + f(t) ,\quad \forall t\in(0,T],\\
        u(0)&=u_0.
\end{aligned}\right.
\end{align}
By \eqref{eqn:Sol-expr-u-const}, the solution $u(t)$ of \eqref{eqn:reform} is given by
\begin{align}\label{eqn:Sol-expr-u}
u(t)&= F(t;t_*)u_0+ \int_0^t E(t-s;t_*)(f(s) + (A(t_*)-A(s))u(s))\d s.
\end{align}

The following perturbation estimate  will be used extensively.
See similar results in {\cite[Corollary 3.1]{JinLiZhou:2019}}.
\begin{lemma}\label{lem:conti-A}
Under conditions \eqref{Cond-1-t}--\eqref{Cond-2-t}, there holds that 
\begin{equation*}
\|(A(t)-A(s))v\|_{\dH p}\le c\min(1,|t-s|)\|v\|_{\dH{p+2}},~~p\in [-2,0].
\end{equation*}
\end{lemma}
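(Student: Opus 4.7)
The plan is to handle the endpoints $p=0$ and $p=-2$ directly, then fill in the intermediate range by interpolation.

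First I would treat $p=0$. Expanding the divergence gives
\begin{equation*}
(A(t)-A(s))v = -\nabla\cdot\bigl((a(x,t)-a(x,s))\nabla v\bigr) = -(a(x,t)-a(x,s)):D^2 v - \bigl(\nabla_x\cdot(a(x,t)-a(x,s))\bigr)\cdot\nabla v,
\end{equation*}
so the $L^2$ norm is bounded by $\|a(\cdot,t)-a(\cdot,s)\|_{L^\infty}\|v\|_{H^2}+\|\nabla_x(a(\cdot,t)-a(\cdot,s))\|_{L^\infty}\|\nabla v\|_{L^2}$. Using the fundamental theorem of calculus in time together with the pointwise bounds $|\partial_t a|+|\nabla_x\partial_t a|\le c_1$ from \eqref{Cond-2-t}, each of those $L^\infty$ factors is controlled by $c|t-s|$. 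Simultaneously the uniform bounds $|a|\le c_0$ and $|\nabla_x a|\le c_1$ from \eqref{Cond-1-t}--\eqref{Cond-2-t} give the same quantities bounded by an absolute constant. Combining with the elliptic regularity identification $\|v\|_{H^2}\sim \|v\|_{\dot H^2(\Omega)}$ for $v\in H^2\cap H^1_0$ then yields
\begin{equation*}
\|(A(t)-A(s))v\|_{L^2(\Omega)}\le c\min(1,|t-s|)\|v\|_{\dot H^2(\Omega)}.
\end{equation*}

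Next I would deduce the case $p=-2$ by a duality argument, which is clean because the matrix $a(x,t)$ is symmetric and the operators satisfy homogeneous Dirichlet conditions, so $A(t)-A(s)$ is formally self-adjoint on $H^1_0\cap H^2$. Pairing with a test function $w\in \dot H^2(\Omega)$ via the duality between $\dot H^{-2}$ and $\dot H^2$,
\begin{equation*}
\bigl|\langle (A(t)-A(s))v,w\rangle\bigr|=\bigl|\langle v,(A(t)-A(s))w\rangle\bigr|\le \|v\|_{L^2(\Omega)}\,\|(A(t)-A(s))w\|_{L^2(\Omega)},
\end{equation*}
and the $p=0$ estimate applied to $w$ gives the factor $c\min(1,|t-s|)\|w\|_{\dot H^2(\Omega)}$. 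Taking the supremum over such $w$ produces the $p=-2$ bound.

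Finally, the intermediate values $p\in(-2,0)$ follow from complex interpolation between $\dot H^{-2}(\Omega)$ and $L^2(\Omega)$ on the target side and between $L^2(\Omega)$ and $\dot H^2(\Omega)$ on the source side, using the characterisation $\dot H^{2\gamma}(\Omega)=(L^2(\Omega),H^1_0(\Omega)\cap H^2(\Omega))_{[\gamma]}$ stated in the introduction. The constant $c\min(1,|t-s|)$ survives interpolation, giving the claim.

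I do not anticipate a genuine obstacle; the only point that requires care is the first-order term in the expansion of $(A(t)-A(s))v$, where one must use the hypothesis on $\nabla_x\partial_t a$ (not just $\partial_t a$) to secure a factor of $|t-s|$ on $\nabla_x(a(x,t)-a(x,s))$. This is precisely why the third derivative appears in \eqref{Cond-2-t}.
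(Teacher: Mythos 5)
Your proof is correct and follows essentially the same route as the paper: the paper establishes $p=0$ directly from \eqref{Cond-2-t}, cites \cite[Corollary 3.1]{JinLiZhou:2019} for the duality-based $p=-2$ case (the same self-adjointness argument you give, which the paper spells out explicitly in Appendix A for the analogous Lemma \ref{lem:cond-AT}), and concludes by interpolation exactly as you do. Your write-up simply makes explicit the computations that the paper delegates to references.
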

\begin{proof}
The condition \eqref{Cond-2-t} implies the case that $p=0$. The case $p=-2$ 
is has been proved in  {\cite[Corollary 3.1]{JinLiZhou:2019}}. 
Then the intermediate case follows from the interpolation \cite[Section 2.5] {Lions:2011}.
\end{proof} 

Next, we state a few regularity results. The proof of these results can be found in, e.g.,
 \cite{Bajlekov:2001, SakamotoYamamoto:2011, JinLiZhou:2019}.
\begin{theorem} \label{thm:reg-u}
Let $u(t)$ be the solution to \eqref{eqn:fde-t}. Then the following statements hold.
\begin{itemize}
  \item[$\rm(i)$] If $u_0 \in \dH q$ with $s\in[0,2]$ and $f=0$, then there holds
\begin{equation*}
\|  \partial_t^{(m)}  u(t) \|_{\dH p} \le c t^{\frac{(s-p)\alpha}{2}-m} \| u_0 \|_{\dH q}
\end{equation*}
with $0\le p-q\le 2$ and  $m = 0,1$. The constant $c$ in the estimate depends on $T$ and $\alpha$.
  \item[$\rm(ii)$]If $u_0=0$ and $f\in L^p(0,T;L^2(\Omega))$ with $1<p<\infty$, then there holds
\begin{equation*}
\|  u\|_{L^p(0,T;\dot H^2(\Omega))}
+\|\Dal u\|_{L^p(0,T;L^2(\Omega))}
\le c\|f\|_{L^p(0,T;L^2(\Omega))}.
\end{equation*}
Moreover, if $f\in L^p(0,T;L^2(\Omega))$ with $1/\alpha<p<\infty$, then $u(t)$ is the solution to problem \eqref{eqn:fde-t}
such that $u\in C([0,T];L^2\II)$.   The constant $c$ in the estimate depends on $T$ and $\alpha$.
\end{itemize}
\end{theorem}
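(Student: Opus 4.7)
The plan is to derive these regularity estimates from the representation formula \eqref{eqn:Sol-expr-u} by combining the smoothing properties in Lemma \ref{lem:op} with the perturbation estimate in Lemma \ref{lem:conti-A}. The core idea is that freezing the diffusion coefficient at a judiciously chosen point reduces the analysis to the well-understood constant-coefficient case, while Lemma \ref{lem:conti-A} controls the error introduced by freezing.

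For part (i) with $f\equiv 0$, I would fix $t\in(0,T]$ and freeze $t_*=t$ in \eqref{eqn:Sol-expr-u} to get
\begin{equation*}
u(t) = F(t;t)u_0 + \int_0^t E(t-s;t)(A(t)-A(s))u(s)\,\d s.
\end{equation*}
Interpolating Lemma \ref{lem:op}(i)--(ii) between the $\dH0\to\dH0$ and $\dH0\to\dH2$ endpoints yields the intermediate bounds $\|F(t;t_*)v\|_{\dH p}\le c\,t^{(q-p)\alpha/2}\|v\|_{\dH q}$ and $\|E(t-s;t_*)w\|_{\dH p}\le c\,(t-s)^{-1}\|w\|_{\dH{p-2}}$. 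Combining the latter with Lemma \ref{lem:conti-A} at order $p-2$ gives
\begin{equation*}
\|E(t-s;t)(A(t)-A(s))u(s)\|_{\dH p}\le c\,\|u(s)\|_{\dH p},
\end{equation*}
so that the identity reduces to a Volterra-type inequality for $\|u(t)\|_{\dH p}$ with inhomogeneous term of size $c\,t^{(q-p)\alpha/2}\|u_0\|_{\dH q}$; a singular Gronwall argument then closes the estimate in the case $m=0$. For $m=1$, I would differentiate in $t$ and invoke the analogous bounds for $\partial_t F$ and $\partial_t E$ (derivable by differentiating the contour integrals \eqref{eqn:sol-op-1} under the integral sign), where the extra factor $t^{-1}$ incurred by time differentiation produces precisely the $-m$ exponent.

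For part (ii) with $u_0=0$, the representation \eqref{eqn:Sol-expr-u} expresses $u$ as the solution of the frozen problem with forcing $\tilde f = f+(A(t_*)-A(\cdot))u$. The maximal $L^p$-regularity estimate
\begin{equation*}
\|u\|_{L^p(0,T;\dH 2)}+\|\Dal u\|_{L^p(0,T;L^2(\Omega))}\le c\,\|\tilde f\|_{L^p(0,T;L^2(\Omega))}
\end{equation*}
for the constant coefficient problem is classical and follows from the sectoriality of $A(t_*)$ together with operator-valued Fourier multiplier theorems (Bajlekov). The perturbation term is absorbed by partitioning $[0,T]$ into subintervals short enough that $\sup_s\|A(t_*)-A(s)\|$ is small (by Lemma \ref{lem:conti-A}) with $t_*$ taken as the left endpoint of each subinterval, and then patching. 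The continuity assertion $u\in C([0,T];L^2(\Omega))$ for $p>1/\alpha$ follows directly from the representation and H\"older's inequality, since $\|E(t-s;t_*)\|_{L^2\to L^2}\le c(t-s)^{\alpha-1}$ is integrable against $f\in L^p$ for $p>1/\alpha$, giving uniform continuity of the convolution.

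The step I expect to be the main obstacle is closing the Gronwall argument in part (i): the kernel $(t-s)^{-1}$ that emerges from Lemma \ref{lem:op} is borderline non-integrable, and one must crucially exploit the compensating factor $|t-s|$ supplied by Lemma \ref{lem:conti-A} and then apply a \emph{weighted} Gronwall inequality in order to propagate the sharp singularity $t^{(q-p)\alpha/2}$ at the origin rather than an artificially weaker power. A secondary technicality is verifying that the hidden constants in the interpolation of Lemma \ref{lem:op} remain uniform in the frozen parameter $t_*\in[0,T]$, which is essential for the patching argument in part (ii).
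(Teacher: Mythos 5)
The paper does not actually prove Theorem \ref{thm:reg-u}: it is quoted from the literature, with the proof deferred to \cite{Bajlekov:2001,SakamotoYamamoto:2011,JinLiZhou:2019}. Your proposal reconstructs, essentially correctly, the argument of the primary reference \cite{JinLiZhou:2019}: freeze the coefficient at $t_*=t$ in \eqref{eqn:Sol-expr-u}, interpolate the smoothing bounds of Lemma \ref{lem:op} (using that $F$ and $E$ commute with $A(t_*)$), pair $\|E(t-s;t_*)w\|_{\dH p}\le c(t-s)^{-1}\|w\|_{\dH{p-2}}$ with Lemma \ref{lem:conti-A} so that the Volterra kernel $(t-s)^{-1}\min(1,|t-s|)$ becomes bounded, and close with Gronwall; for part (ii), frozen-coefficient maximal $L^p$-regularity plus absorption on short subintervals. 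For $m=0$ this is sound (and in fact easier than you fear: after the $|t-s|$ cancellation the kernel is bounded, so only the forcing $t^{(q-p)\alpha/2}$ is singular, and the standard Gronwall lemma with integrable singular forcing suffices).

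The one step where the sketch would fail as literally stated is $m=1$. You propose to ``differentiate the contour integrals under the integral sign,'' but $\partial_tE(t;t_*)\sim t^{\alpha-2}$ near $t=0$, so $\int_0^t\partial_tE(t-s;t_*)(A(t_*)-A(s))u(s)\,\d s$ is not absolutely convergent and the compensating factor $|t-s|$ from Lemma \ref{lem:conti-A} is no longer enough ($t^{\alpha-2}\cdot t$ is still non-integrable for $\alpha<1$). One must instead change variables to $\int_0^tE(s;t_*)(A(t_*)-A(t-s))u(t-s)\,\d s$ and move the derivative onto the slowly varying factor, which produces a boundary term $E(t;t_*)(A(t_*)-A(0))u_0$ (meaningful only in a negative-order space for nonsmooth $u_0$, via \eqref{Cond-2-t} and duality) and an integral involving $\partial_t a$ and $u'$ itself, so the estimate for $u'$ must be closed by a second bootstrap rather than read off directly. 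A secondary caveat for part (ii): because $\Dal$ is nonlocal, the restriction of the equation to a subinterval $[T_{j-1},T_j]$ carries a history term, so the patching is not the clean local absorption one has for parabolic equations; this is precisely the technical content of \cite[Theorem 2.1]{JinLiZhou:2019}, to which you should simply appeal (as the paper does) rather than reprove.
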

The next lemma provides an a priori estimate similar to Theorem \ref{thm:reg-u} (i).
Note that the generic constant in the new estimate is independent of $T$.
\begin{lemma}\label{lemma:prior-estimate}
Suppose that $u_0 \in L^2\II$ and $f=0$.
Let $u(t)$ be the solution to the subdiffusion problem \eqref{eqn:fde-t}.
Under conditions \eqref{Cond-1-t}--\eqref{Cond-2-t}, there holds
\begin{align}\label{eqn:est-u-1}
 &\| u(t) \|_{L^2\II} \le c \min(1,t^{-\alpha}) \| u_0 \|_{L^2\II}~~\text{and}~~ \| u(t) \|_{\dH2} \le c \,e^{ct} t^{-\alpha} \| u_0 \|_{L^2\II}
  \quad \text{for all}~~ t>0
\end{align}
Meanwhile, for any $\epsilon\in(0,1/\alpha-1)$ and $t>0$, there holds that 
\begin{equation}\label{eqn:est-u-2}
  \| u(t)\|_{\dH 2} \le c  t ^{-(1-\epsilon)\alpha}\|u_0\|_{L^2\II}.
\end{equation}
All the positive constants $c$ in above estimates are independent of $t$ and $T$.
\end{lemma}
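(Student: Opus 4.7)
The plan is to apply the perturbation representation~\eqref{eqn:Sol-expr-u} with frozen time $t_*=t$, namely
\begin{equation*}
u(t)=F(t;t)u_0+\int_0^t E(t-s;t)(A(t)-A(s))u(s)\,ds,
\end{equation*}
and to reduce each of the three estimates to a Gronwall-type inequality by combining the smoothing bounds for the frozen-coefficient operators (Lemma~\ref{lem:op}) with the perturbation estimate (Lemma~\ref{lem:conti-A}). The main care is in choosing the correct pairing of Sobolev scales so that the integral kernel is non-singular, or only mildly singular, and in running a bootstrap that converts local boundedness into the $t^{-\alpha}$ decay at large~$t$.

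For the $L^2$ bound, I would first estimate $\|F(t;t)u_0\|_{L^2(\Omega)}\le c\min(1,t^{-\alpha})\|u_0\|_{L^2(\Omega)}$ by Lemma~\ref{lem:op}(ii). For the perturbation integral, I pair Lemma~\ref{lem:conti-A} with $p=-2$ against the smoothing bound $\|E(\tau;t)v\|_{L^2(\Omega)}\le c\tau^{-1}\|v\|_{\dH{-2}}$, which follows from Lemma~\ref{lem:op}(i) with $k=1$ by the commutativity of $A(t)$ and $E(\tau;t)$. This yields the bounded kernel $c\min(1,(t-s)^{-1})\|u(s)\|_{L^2(\Omega)}$, and a standard Gronwall argument gives local boundedness. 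The long-time decay $t^{-\alpha}$ is then extracted by splitting the integral at $s=t/2$: on $[0,t/2]$ the kernel contributes a factor $ct^{-1}$ against $\int_0^{t/2}\min(1,s^{-\alpha})\,ds=\mathcal{O}(t^{1-\alpha})$, while on $[t/2,t]$ the already-decayed $\|u(s)\|_{L^2(\Omega)}$ supplies $(t/2)^{-\alpha}$.

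For the $\dot H^2$ estimate with the $e^{ct}$ factor, I apply $A(t)$ to the representation. The leading term $\|A(t)F(t;t)u_0\|_{L^2(\Omega)}\le ct^{-\alpha}\|u_0\|_{L^2(\Omega)}$ follows from Lemma~\ref{lem:op}(i). Lemma~\ref{lem:op}(i) with $k=1$ combined with Lemma~\ref{lem:conti-A} at $p=0$ produces the bounded kernel $c\min(1,(t-s)^{-1})\|u(s)\|_{\dH{2}}$, and a Gronwall inequality applied to $\varphi(t):=t^{\alpha}\|u(t)\|_{\dH{2}}$ yields the exponential factor. For the refined estimate, I reroute the integrand through $A(t)^{-1}$ by writing
\begin{equation*}
A(t)E(t-s;t)(A(t)-A(s))u(s)=A(t)^{2}E(t-s;t)\,A(t)^{-1}(A(t)-A(s))u(s),
\end{equation*}
applying Lemma~\ref{lem:op}(i) with $k=2$ to get $\|A(t)^{2}E(\tau;t)w\|_{L^2(\Omega)}\le c\tau^{-1-\alpha}\|w\|_{L^2(\Omega)}$, and then invoking Lemma~\ref{lem:conti-A} with $p=-2$ together with the $L^2$ decay of $\|u(s)\|_{L^2(\Omega)}$ just established. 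A careful split of the integral at $s=t/2$ then yields a $T$-independent polynomial bound; the parameter $\epsilon$ enters when absorbing a borderline logarithmic factor near $s\approx t$, and the constraint $\epsilon<1/\alpha-1$ ensures integrability of the relevant singular contribution.

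The main obstacle is the refined estimate: every constant must be tracked independently of $T$, so no naive Gronwall that would inject exponential growth is permitted. The delicate point is the interaction between the kernel $(t-s)^{-1-\alpha}\min(1,|t-s|)$ and the decay factor $\min(1,s^{-\alpha})$; one must verify that both the near-diagonal regime $s\approx t$ and the far-from-diagonal regime $s\le t/2$ contribute comparable polynomial decay in $t$, and that no cross-terms depending on $T$ appear, which is precisely where the small loss $\epsilon\alpha$ in the singularity order is needed.
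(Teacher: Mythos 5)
Your handling of the two $\dot H^2$ estimates follows the paper's route: applying $A(t)$ to the representation, pairing Lemma \ref{lem:op}(i) with Lemma \ref{lem:conti-A} at $p=0$ and running Gronwall for the $e^{ct}t^{-\alpha}$ bound, and using the $A(t)^2E$ smoothing together with a fractional power of $I-A(t)^{-1}A(s)$ (whence the constraint $\epsilon<1/\alpha-1$) plus the $L^2$ decay of $u(s)$ for the refined bound. The genuine gap is in the first estimate, $\|u(t)\|_{L^2(\Omega)}\le c\min(1,t^{-\alpha})\|u_0\|_{L^2(\Omega)}$ with $c$ independent of $T$. A pure perturbation argument cannot produce this: Lemma \ref{lem:conti-A} gives no decay of $A(t)-A(s)$ for large $|t-s|$, so your Duhamel kernel is only $c\min\bigl(1,(t-s)^{-1}\bigr)$, the Gronwall step yields $\|u(t)\|_{L^2(\Omega)}\le ce^{ct}\|u_0\|_{L^2(\Omega)}$, and the proposed bootstrap does not close. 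On $[t/2,t]$ you would need to bound
\begin{equation*}
\int_{t/2}^{t}\min\bigl(1,(t-s)^{-1}\bigr)\,\d s\;\sup_{s\in[t/2,t]}\|u(s)\|_{L^2(\Omega)}
= O(\log t)\,\sup_{s\in[t/2,t]}\|u(s)\|_{L^2(\Omega)},
\end{equation*}
which is circular (the supremum includes $s$ arbitrarily close to $t$) and, because of the growing $O(\log t)$ prefactor, cannot be absorbed into the quantity being estimated for large $t$.

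The paper obtains the $T$-independent decay by a different mechanism entirely: an energy argument. Setting $\underline{A}=-c_0\Delta$ so that $A(t)-\underline{A}$ is positive semidefinite by \eqref{Cond-1-t}, testing the equation with $u(t)$, and using the inequality $(\partial_t^\alpha u,u)\ge\|u\|_{L^2(\Omega)}\,\partial_t^\alpha\|u\|_{L^2(\Omega)}$ together with the Poincar\'e inequality gives $\partial_t^\alpha\|u(t)\|_{L^2(\Omega)}+c\|u(t)\|_{L^2(\Omega)}\le0$; the comparison principle for fractional ODEs then yields $\|u(t)\|_{L^2(\Omega)}\le E_{\alpha,1}(-ct^\alpha)\|u_0\|_{L^2(\Omega)}\le c(1+ct^\alpha)^{-1}\|u_0\|_{L^2(\Omega)}$. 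This dissipativity input is indispensable here, and its absence propagates: your proof of \eqref{eqn:est-u-2} correctly feeds the decay $\|u(s)\|_{L^2(\Omega)}\le c\min(1,s^{-\alpha})\|u_0\|_{L^2(\Omega)}$ into the integrand, so that estimate also remains unproven until the $L^2$ bound is established by some non-perturbative means.
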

\begin{proof}
We define an operator $\underline{A}= -c_0 \Delta$.
Then by condition \ref{Cond-1-t}, the operator $A(t) - \underline{A}$
is selfadjoint and positive semidefinite for all $t\ge 0$. Then we rewrite the equation  \eqref{eqn:fde-t} as
$$  \partial_t^\alpha u(t) +  \underline{A} u(t) =  (\underline{A}-A(t)) u(t)  \quad \text{for all}~~ t\in(0,\infty).  $$
Taking inner product with $u(t)$ on the above equation and integrating by parts, we obtain
$$  (\partial_t^\alpha u(t), u(t)) +  c_0 \| \nabla u(t) \|_{L^2\II}^2
=  \big((c_0-a(\cdot,t)) \nabla u(t), \nabla u(t)\big)\le 0 \quad \text{for all}~~ t\in(0,\infty).  $$
Using the facts that $(\partial_t^\alpha u(t), u(t)) \ge \| u(t) \|_{L^2\II} \partial_t^\alpha \| u(t) \|_{L^2\II}$ \cite[Lemma 6.1(iii)]{Jin:2021book}
and Poincar\'e inequality we arrive at
$$   \partial_t^\alpha \| u(t)\|_{L^2\II}  +  c  \| u(t) \|_{L^2\II}   \le 0 \quad \text{for all}~~ t\in(0,\infty),  $$
for some constant $c$ uniform in $t$. Then the comparison principle for fractional ODEs \cite[Theorem 2.3]{Lakshmikantham:2007} leads to
$$  \| u(t)\|_{L^2\II} \le E_{\alpha,1}(-c t^\alpha) \| u_0 \| \le \frac{c}{1+ct^\alpha} \| u_0 \|_{L^2\II}. $$
This immediately leads to the desired claim \eqref{eqn:est-u-1}.

Next, we apply the relation \eqref{eqn:Sol-expr-u}, Lemmas \ref{lem:op} and \ref{lem:conti-A} (with $p=2$) to obtain for any $t_*\in(0,T]$
\begin{align*}
  \| u(t_*)\|_{\dH2} &\le \| F(t_*;t_*) u_0\|_{\dH2} + c \int_0^{t_*} \|  A(t_*) E(t_*-s;t_*) \| \, \|  (A(t_*) -A(s))u(s) \|_{L^2\II}\,\d s  \\
  &\le c t_*^{-\alpha} \| u_0 \|_{L^2\II} + c \int_0^{t_*}  \|   u(s) \|_{\dH2}\,\d s.
\end{align*}
Then the Gronwall's inequality implies for any $t>0$
$$\| u(t) \|_{\dH2} \le c \,e^{ct} t^{-\alpha} \| u_0 \|_{L^2\II}.$$
Meanwhile, Lemma \ref{lem:conti-A} leads to the estimate for $\beta=(1+\epsilon)\alpha$ with $\epsilon\in(0,1/\alpha-1)$  
\begin{align*}
  \| u(t_*)\|_{\dH2} &\le \| F(t_*;t_*) u_0\|_{\dH2} + \int_0^{t_*} \| A(t_*)^2 E(t_*-s;t_*) \| \, \| I - A(t_*)^{-1} A(s) \| \,  \|  u(s) \|_{L^2\II}\,\d s  \\
  &\le c t_*^{-\alpha} \| u_0 \|_{L^2\II} +  \int_{0}^{t_*}(t_*-s)^{-1+\epsilon\alpha} s^{-\alpha} \,\d s
  \le c_\epsilon t_*^{-(1-\epsilon)\alpha}.
\end{align*}
for any $t_*>0$. This completes the proof of \eqref{eqn:est-u-2}.
\end{proof}
Using the superposition principle, we consider the homogeneous source condition, i.e., $f=0$, without loss of generality.
Then the corresponding backward subdiffusion problem reads: find $u(0)$ such that
\begin{align}\label{eqn:fde-back}
\partial_t^\alpha u + A(t) u = 0 \,\,\,\forall\, t\in(0,T] \quad \text{with} \quad u(T)&= g  \qquad\mbox{in}\,\,\,\Omega.
\end{align}

The next theorem provides a stability estimate for the backward problem of \eqref{eqn:fde-back} 
when $T$ is sufficiently small.
\begin{theorem}\label{thm:stab-small}
Suppose that $u_0 \in L^2\II$ and $f=0$.
Let $u(t)$ be the solution to \eqref{eqn:fde-t}.
Under conditions \eqref{Cond-1-t}--\eqref{Cond-2-t},
there exists a positive constant $T_0$ such that for any $T\le T_0$ there holds
$$ \|  u_0  \|_{L^2\II} \le c(1+T^\alpha) \|  u(T) \|_{H^2\II}, $$
where the constant $c$ depends on $T_0$ and $T$.
\end{theorem}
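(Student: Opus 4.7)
The plan is to apply the representation formula \eqref{eqn:Sol-expr-u} with the freezing point $t_* = T$ and then invert the frozen solution operator $F(T;T)$ using Lemma \ref{lem:op}(iii), treating the perturbation term as a small contribution that can be absorbed into the left-hand side once $T$ is sufficiently small.

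Specifically, setting $t_* = t = T$ and $f = 0$ in \eqref{eqn:Sol-expr-u} yields
$$u(T) = F(T;T) u_0 + w, \qquad w := \int_0^T E(T-s;T)\big(A(T)-A(s)\big)u(s)\,\d s.$$
Applying $F(T;T)^{-1}$ and invoking Lemma \ref{lem:op}(iii) gives
$$\| u_0 \|_{\L2Om} \le c(1+T^\alpha)\Big( \| u(T) \|_{\dH 2} + \| w \|_{\dH 2} \Big).$$
The first term is already bounded by $\| u(T) \|_{H^2\II}$, which is the right-hand side of the claim. So the heart of the proof is to control $\| w \|_{\dH 2}$ by a small multiple of $\| u_0 \|_{\L2Om}$.

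To bound $\| w \|_{\dH 2} = \| A(T) w \|_{\L2Om}$, I would use Lemma \ref{lem:op}(i) with $k=1$ to obtain $\| A(T) E(T-s;T) \|_{\L2Om \to \L2Om} \le c(T-s)^{-1}$, and Lemma \ref{lem:conti-A} with $p=0$ to obtain $\| (A(T)-A(s))u(s) \|_{\L2Om} \le c \min(1,T-s)\| u(s) \|_{\dH 2}$. The product integrates cleanly since $(T-s)^{-1}\min(1,T-s) \le 1$, which yields
$$\| w \|_{\dH 2} \le c \int_0^T \| u(s) \|_{\dH 2}\,\d s.$$
At this point I would invoke the a priori estimate \eqref{eqn:est-u-1} of Lemma \ref{lemma:prior-estimate}, whose constant is independent of $T$: for $T \le T_0$ the factor $e^{cs}$ is bounded by a constant, so $\| u(s) \|_{\dH 2} \le c s^{-\alpha}\| u_0 \|_{\L2Om}$, and thus $\| w \|_{\dH 2} \le c(1-\alpha)^{-1} T^{1-\alpha} \| u_0 \|_{\L2Om}$. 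Combining everything gives
$$\| u_0 \|_{\L2Om} \le c(1+T^\alpha)\| u(T) \|_{H^2\II} + c(1+T^\alpha) T^{1-\alpha} \| u_0 \|_{\L2Om},$$
and choosing $T_0$ small enough so that $c(1+T_0^\alpha) T_0^{1-\alpha} \le 1/2$ allows absorption of the last term into the left-hand side, proving the claim.

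The main obstacle is the selection of the regularity bound for $u(s)$ near $s=0$: using Theorem \ref{thm:reg-u}(i) would give a constant depending on $T$, which would circularly interfere with choosing $T_0$ independently of that constant, so the $T$-independent estimate from Lemma \ref{lemma:prior-estimate} is indispensable. A secondary subtlety is the invertibility of $F(T;T)$ on $\dH 2$, which is built into Lemma \ref{lem:op}(iii) via the spectral representation and strict positivity of the Mittag-Leffler function $E_{\alpha,1}(-\lambda T^\alpha)$.
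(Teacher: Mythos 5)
Your proposal is correct and follows essentially the same route as the paper's proof: the representation \eqref{eqn:Sol-expr-u} with $t_*=T$, inversion of $F(T;T)$ via Lemma \ref{lem:op}(iii), the bound $\|A(T)E(T-s;T)\|\,\|(A(T)-A(s))u(s)\|_{L^2\II}\le c\|u(s)\|_{\dH2}$ from Lemma \ref{lem:op}(i) ($k=1$) and Lemma \ref{lem:conti-A} ($p=0$), the $T$-independent a priori estimate \eqref{eqn:est-u-1}, and absorption for small $T_0$. Your observation that Theorem \ref{thm:reg-u}(i) would be circular and that Lemma \ref{lemma:prior-estimate} is indispensable is exactly the point of the paper's argument.
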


\begin{proof}
We rearrange the terms in relation \eqref{eqn:Sol-expr-u} with $t_* = T$ to obtain
\begin{equation}\label{eqn:Sol-expr-u-1}
 u_0 = F(T;T)^{-1} \Big[u(T) - \int_0^T E(T-s;T) (A(T) - A(s)) u(s) \d s\Big].
 \end{equation}
Taking $L^2\II$ norm on both sides of the above relation, we apply Lemma \ref{lem:op} (iii) to obtain
\begin{align*}
\| u_0 \|_{L^2\II} 
 & \le  C (1+T^\alpha) \Big(\| u(T)  \|_{\dH2} + \int_0^{T}     \| A(T) E(T-s;T)\| \, \|(A(T)-A(s)))u(s) \|_{L^2\II}\,\d s \Big).
\end{align*}
According to Lemmas \ref{lem:conti-A} with $p=0$ and  \ref{lem:op} (i) we arrive at
\begin{align*}
\| u_0 \|_{L^2\II}   & \le  c(1+T^\alpha) \Big(\| u(T)  \|_{\dH2} + \int_0^{T}  \| u(s) \|_{\dH2}\,\d s \Big).
\end{align*}
Then this together with the estimate \eqref{eqn:est-u-1} implies
\begin{align*}
\| u_0 \|_{L^2\II}
&\le  c (1+T^\alpha) \Big(\| u(T)  \|_{\dH2} + \int_0^{T} e^{cs} s^{-\alpha} \,\d s \| u_0 \|_{L^2\II}\Big)\\
&\le c (1+T^\alpha) \Big(\| u(T)  \|_{\dH2} + ce^{cT}T^{1-\al} \| u_0 \|_{L^2\II}\Big)
\end{align*}
Let  be the constant that 
\begin{equation}
\label{cond:T0}
c (1+T_0^\alpha)e^{cT_0}T_0^{1-\al} < \frac12, ~~ T_0<1.
\end{equation}
 Then for any $T\le T_0$
\begin{align*}
\| u_0 \|_{L^2\II}  \le c (1+T^\alpha) \| u(T)  \|_{\dH2} .
\end{align*}
This completes the proof of the lemma.
\end{proof}
\vskip5pt

Next, we derive a stability estimate for a large $T$. To this end, we need the following assumption.
\begin{assumption}\label{ass:large-t}
There exists constants $c_2>0$ and $\kappa>0$ such that
\begin{align*}
 | \partial_t \nabla_x a(x,t)| +   | \partial_t a(x,t)|\le c_2 t^{-\kappa}\quad  \forall\, (x,t)\in \Omega\times(0,\infty).
\end{align*}
\end{assumption}

Under the condition, we have the following perturbation estimate. 
The proof is similar to that of Lemma \ref{lem:conti-A}. The proof is provided in Appendix A for completeness.
\begin{lemma}\label{lem:cond-AT}
Under Conditions \eqref{Cond-1-t}-\eqref{Cond-2-t} and Assumption \ref{ass:large-t},  there holds for all $t,s\ge1$
\begin{equation*}
\|(A(t)-A(s))v\|_{\dH p} \le c \min\big(1,\min(t,s)^{-\kappa}|t-s|\big)\|v\|_{\dH{p+2}},\qquad \forall~~ p\in[-2,0]  
\end{equation*}
\end{lemma}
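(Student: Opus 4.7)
\emph{Plan.} The plan is to mirror the proof of Lemma \ref{lem:conti-A}: establish the endpoint $p=0$ directly by a pointwise bound on the coefficient difference, handle the endpoint $p=-2$ by duality through the self-adjointness of $A(t)$, and then recover the intermediate $p$ by complex interpolation. The only genuinely new ingredient is the decay factor $\min(t,s)^{-\kappa}$ coming from Assumption \ref{ass:large-t}.

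\emph{Case $p=0$.} First I would write $(A(t)-A(s))v=-\nabla\cdot\big((a(\cdot,t)-a(\cdot,s))\nabla v\big)$ and expand the divergence, which yields
\[
\|(A(t)-A(s))v\|_{L^2\II}\le c\big(\|a(\cdot,t)-a(\cdot,s)\|_{L^\infty(\Omega)}+\|\nabla_x(a(\cdot,t)-a(\cdot,s))\|_{L^\infty(\Omega)}\big)\|v\|_{\dH 2}.
\]
Assuming without loss of generality $1\le s\le t$, the fundamental theorem of calculus in the time variable together with Assumption \ref{ass:large-t} gives
\[
|a(x,t)-a(x,s)|+|\nabla_x(a(x,t)-a(x,s))|\le c_2\int_s^t r^{-\kappa}\,\d r\le c_2\, s^{-\kappa}(t-s),
\]
where the last inequality uses that $r\mapsto r^{-\kappa}$ is nonincreasing on $[1,\infty)$. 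Combining this with the crude uniform bound supplied by \eqref{Cond-2-t} yields
\[
\|(A(t)-A(s))v\|_{L^2\II}\le c\,\min\!\big(1,\min(t,s)^{-\kappa}|t-s|\big)\|v\|_{\dH 2},
\]
which is the claim at $p=0$.

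\emph{Case $p=-2$ and interpolation.} For $p=-2$ I would use duality. Since $a(x,t)$ is symmetric, a double integration by parts shows that $A(t)-A(s)$ is self-adjoint as an operator from $\dH 2$ into $L^2\II$, so
\[
\|(A(t)-A(s))v\|_{\dH{-2}}=\sup_{\|w\|_{\dH 2}\le 1}\big(v,(A(t)-A(s))w\big)_{L^2\II}\le \|v\|_{L^2\II}\sup_{\|w\|_{\dH 2}\le 1}\|(A(t)-A(s))w\|_{L^2\II},
\]
and the bound from the previous step applied to $w$ closes the $p=-2$ case. The remaining $p\in(-2,0)$ then follows by complex interpolation between the $p=0$ and $p=-2$ endpoints, exactly as in Lemma \ref{lem:conti-A}. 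The only delicate step is justifying the pairing $\big(v,(A(t)-A(s))w\big)_{L^2\II}$ when $v\in\dH{-2}$ is merely a distribution; I would circumvent this by means of the identification $(\dH 2)^*\cong\dH{-2}$ already in force in the preliminary spectral setup, after which the argument is pure bookkeeping that tracks the additional $\min(t,s)^{-\kappa}$ factor produced by Assumption \ref{ass:large-t}.
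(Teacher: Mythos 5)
Your proposal is correct and follows essentially the same route as the paper's Appendix A proof: the $p=0$ endpoint via the pointwise bound on $a(\cdot,t)-a(\cdot,s)$ and its gradient (with the $\min(t,s)^{-\kappa}|t-s|$ factor obtained from Assumption \ref{ass:large-t} by integrating in time), the $p=-2$ endpoint by duality using the symmetry of $a$, and interpolation for intermediate $p$. The only difference is that you spell out the fundamental-theorem-of-calculus step that the paper leaves implicit.
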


The next theorem provides a stability result in case of sufficiently large T.
\begin{theorem}\label{thm:stab-big}
Suppose that $u_0 \in L^2\II$ and $f=0$. Let conditions \eqref{Cond-1-t}-\eqref{Cond-2-t} and Assumption \ref{ass:large-t} be valid.
Let $u(t)$ be the solution to the subdiffusion problem \eqref{eqn:fde-t}.
Then there exists  positive $T_1>1$ such that for any $T\ge T_1$ there holds
$$ \|  u_0  \|_{L^2\II} \le c  (1+T^\alpha) \|  u(T) \|_{H^2\II}, $$
where the constant $c$ depends on $T_1$ and $T$.
\end{theorem}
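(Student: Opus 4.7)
The plan is to adapt the strategy of Theorem~\ref{thm:stab-small}, beginning from the identity
\[
u_0 = F(T;T)^{-1}\Big[u(T) - \int_0^T E(T-s;T)(A(T)-A(s))u(s)\d s\Big]
\]
and then invoking Lemma~\ref{lem:op}(iii) together with the smoothing bound $\|A(T)E(T-s;T)\|\le c(T-s)^{-1}$ of Lemma~\ref{lem:op}(i) to derive
\[
\|u_0\|_{\L2Om}\le c(1+T^\alpha)\|u(T)\|_{\dH2}+c(1+T^\alpha)\int_0^T (T-s)^{-1}\|(A(T)-A(s))u(s)\|_{\L2Om}\d s.
\]
The task is then to bound the perturbation integral by $c\phi(T)\|u_0\|_{\L2Om}$ with $(1+T^\alpha)\phi(T)<\tfrac12$ for $T\ge T_1$, so that the term can be absorbed into the left-hand side.

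The proof of Theorem~\ref{thm:stab-small} controlled $\|u(s)\|_{\dH2}$ by the exponential bound in \eqref{eqn:est-u-1}, which yields a prefactor $e^{cT}T^{1-\alpha}$ that is small only in the short-time regime. For large $T$ I would replace it by two sharper ingredients: the uniform-in-$T$ decay \eqref{eqn:est-u-2} of Lemma~\ref{lemma:prior-estimate}, $\|u(s)\|_{\dH2}\le cs^{-(1-\epsilon)\alpha}\|u_0\|_{\L2Om}$, together with the refined perturbation estimate of Lemma~\ref{lem:cond-AT} that exploits Assumption~\ref{ass:large-t} to replace the factor $\min(1,|T-s|)$ of Lemma~\ref{lem:conti-A} by the stronger $\min(1,s^{-\kappa}|T-s|)$ whenever $s\ge 1$. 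I would then split the perturbation integral at $s=1$. On $[0,1]$, the bound $(T-s)^{-1}\le c/T$ combined with Lemma~\ref{lem:conti-A} and \eqref{eqn:est-u-2} gives a contribution of order $cT^{-1}\|u_0\|_{\L2Om}$, whose product with $(1+T^\alpha)$ is of order $T^{\alpha-1}\to 0$.

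The main obstacle is the integral on $[1,T]$. Using $(T-s)^{-1}\min(1,s^{-\kappa}(T-s))\le\min((T-s)^{-1},s^{-\kappa})$ together with \eqref{eqn:est-u-2}, the integrand is dominated by $\min((T-s)^{-1},s^{-\kappa})\,s^{-(1-\epsilon)\alpha}\|u_0\|_{\L2Om}$. Applying the elementary inequality $\min(a,b)\le a^{1-\theta}b^\theta$ with a free parameter $\theta\in(0,1)$ and rescaling $s=Tr$ reduces the estimate to controlling
\[
T^{\theta(1-\kappa)-(1-\epsilon)\alpha}\int_{1/T}^{1}(1-r)^{\theta-1}\,r^{-\kappa\theta-(1-\epsilon)\alpha}\d r,
\]
where the last Beta-like integral is $T$-uniformly bounded provided $\kappa\theta+(1-\epsilon)\alpha<1$. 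After multiplication by the prefactor $T^\alpha$, absorption requires the exponent $\alpha+\theta(1-\kappa)-(1-\epsilon)\alpha<0$, i.e.\ $\theta(\kappa-1)>\epsilon\alpha$; this is the decisive step where the decay rate $\kappa$ of Assumption~\ref{ass:large-t} enters. Calibrating $\epsilon$ small and $\theta$ in the admissible window yields a total perturbation bound of order $cT^{-\delta}\|u_0\|_{\L2Om}$ for some $\delta>0$, and selecting $T_1$ large enough so that $cT_1^{-\delta}<\tfrac12$ completes the absorption and yields the stated stability.
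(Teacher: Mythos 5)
Your reduction to the identity \eqref{eqn:Sol-expr-u-1}, the use of Lemma~\ref{lem:op}(iii), and the treatment of the contribution from $s\in[0,1]$ are all fine, but the decisive step on $[1,T]$ has a genuine gap: your absorption condition $\theta(\kappa-1)>\epsilon\alpha$ is vacuous unless $\kappa>1$, whereas Assumption~\ref{ass:large-t} only guarantees $\kappa>0$. For $\kappa\in(0,1]$ the left-hand side is $\le 0$ for every admissible $\theta\in(0,1)$, so there is no window to calibrate. The failure is not an artifact of the $\min(a,b)\le a^{1-\theta}b^\theta$ trick; it is intrinsic to the pairing you chose. Bounding $\|A(T)E(T-s;T)\|\le c(T-s)^{-1}$ against $\|(A(T)-A(s))u(s)\|_{\L2Om}\le c\min(1,s^{-\kappa}(T-s))\|u(s)\|_{\dH2}$ and $\|u(s)\|_{\dH2}\le cs^{-(1-\epsilon)\alpha}\|u_0\|_{\L2Om}$ already gives, on $[1,T/2]$ where the minimum equals $(T-s)^{-1}\sim T^{-1}$, a contribution of size $T^{-1}\int_1^{T/2}s^{-(1-\epsilon)\alpha}\d s\sim T^{-(1-\epsilon)\alpha}$; after multiplication by the prefactor $(1+T^\alpha)$ this is $O(T^{\epsilon\alpha})$, i.e.\ an $O(1)$ (in fact slightly growing) quantity with a constant that is not small, so it cannot be absorbed no matter how $T_1$ is chosen.

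The paper avoids this by distributing the operators differently: it writes $A(T)E(T-s;T)(A(T)-A(s))u(s)$ as $A(T)^2E(T-s;T)\,(I-A(T)^{-1}A(s))\,u(s)$, uses the second-order smoothing $\|A(T)^2E(T-s;T)\|\le c(T-s)^{-1-\alpha}$, and interpolates Lemma~\ref{lem:cond-AT} (the $p=-2$ case) to the fractional power $\alpha(1+\epsilon)$, giving $\|(I-A(T)^{-1}A(s))v\|_{\L2Om}\le c\,\min(T,s)^{-\kappa\alpha(1+\epsilon)}|T-s|^{\alpha(1+\epsilon)}\|v\|_{\L2Om}$. The point is twofold: the $|T-s|^{\alpha(1+\epsilon)}$ gain overcompensates the extra $(T-s)^{-\alpha}$ singularity, and the estimate now acts on $\|u(s)\|_{\L2Om}$ rather than $\|u(s)\|_{\dH2}$, whose decay $s^{-(1-\epsilon)\alpha}$ near $s=T$ already cancels the $T^\alpha$ prefactor; the factor $T^{-\kappa\alpha(1+\epsilon)}$ then supplies strictly negative extra decay for \emph{every} $\kappa>0$ (the resulting exponent is $-\kappa\alpha(1+\epsilon)+2\epsilon\alpha<0$ for $\epsilon$ small). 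The paper splits at $s=T/2$ rather than $s=1$ and handles $[0,T/2]$ with the crude bound $(T-s)^{-1-\alpha}\le cT^{-1-\alpha}$, yielding $T^{-(2-\epsilon)\alpha}$ there. To repair your argument you would need to import this interpolated perturbation estimate and the $k=2$ smoothing; as written, your proof only establishes the theorem under the additional hypothesis $\kappa>1$.
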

\begin{proof}
Using  \eqref{eqn:Sol-expr-u-1} and taking $L^2$ norm on both sides,  we apply again Lemma \ref{lem:op} (iii) to obtain
\begin{align*}
\| u_0 \|_{L^2\II}
 & \le  c (1+T^\alpha) \Big(\| u(T)  \|_{\dH2} + \Big\|\int_0^{T}     A(T) E(T-s;T)\big(  A(T) -A(s) \big) u(s) \,\d s \Big\|_{L^2\II}\Big).
\end{align*}
Applying Lemma \ref{lem:cond-AT} with $p=-2$, we have for sufficiently small $\epsilon>0$
$$\|(I-A(t)^{-1}A(s))v\|_{L^2\II} \le c \min(t,s)^{-\kappa\alpha(1+\epsilon)}|t-s|^{\alpha(1+\epsilon)}\|v\|_{L^2\II} $$
This together with Lemma \ref{lem:op} (i) and the \text{a priori} estimate \eqref{eqn:est-u-2},
imply
\begin{align*}
\|  A(T) E(T-s;T)\big(  A(T) -A(s) \big) u(s)   \|_{L^2\II}
&\le  \| A(T)^2 E(T-s;T)\| \, \| (I-A(T)^{-1}A(s)) u(s) \|_{L^2\II}\\
&\le c T^{-\kappa\alpha(1+\epsilon)}  (T-s)^{-1+\epsilon\alpha}  \, \|   u(s) \|_{L^2\II} \\
&\le c T^{-\kappa\alpha(1+\epsilon)}  (T-s)^{-1+\epsilon\alpha}  \, s^{-(1-\epsilon)\alpha}\|   u(0) \|_{L^2\II}
\end{align*}
for all $s\in[T/2,T]$. Then we arrive at 
\begin{align*}
\Big\|\int_{{T}/2}^{T}      A(T)  E(T-s;T)\big(  A(T) -A(s) \big) u(s) \,\d s \Big\|_{L^2\II} \le c T^{-\alpha-\kappa\alpha(1+\epsilon)+2\epsilon\alpha}  \| u_0 \|_{L^2\II} .
\end{align*}

Meanwhile, we apply Lemmas \ref{lem:conti-A} and \ref{lem:op} again to derive
$$\| A(T)^2 E(T-s;T)\| \, \| I - A(T)^{-1}A(s)  \| \le c (T-s)^{-1-\alpha}\quad \text{for all}~~ s\in(0,T/2].$$
This together with the estimate \eqref{eqn:est-u-1} leads to
\begin{align*}
 &\quad \int_0^{T/2}     \| A(T)^2 E(T-s;T)\| \, \| I - A(T)^{-1}A(s)  \| \, \| u(s) \|_{L^2\II}\,\d s  \\
 &\le c \int_0^{T/2} (T-s)^{-1-\alpha} s^{-(1-\epsilon)\alpha}   \,\d s \| u(0) \|_{L^2\II}
\le c T^{-(2-\epsilon)\alpha}\| u(0) \|_{\dH 2}.
\end{align*}
To sum up, we arrive at the estimate
\begin{align*}
\| u_0 \|_{L^2\II}
 & \le  c (1+T^\alpha) \| u(T)  \|_{\dH2} + c (1+T^\alpha) (T^{-\kappa\alpha(1+\epsilon)-\alpha+2\epsilon\alpha}  + T^{-(2-\epsilon)\alpha})\| u_0 \|_{L^2\II}
\end{align*}
Then choosing a sufficiently small $\epsilon$, there exists $T_1>1$ sufficiently large such that
\begin{equation}
\label{cond:T1}
c  (1+T_1^\alpha) (T_1^{-\kappa\alpha(1+\epsilon)-\alpha+2\epsilon\alpha}  + T_1^{-(2-\epsilon)\alpha}) = \frac 12
\end{equation}
and hence for any $T\ge T_1$, there holds the desired stability estimate.
\end{proof}

In Sections \ref{sec:3} and \ref{sec:4}, we shall discuss respectively the regularization and a fully discrete scheme
with rigorous numerical analysis. The stability estimate in Theorems \ref{thm:stab-small} and \ref{thm:stab-big}
provides a key tool in the coming numerical analysis. 
Therefore, from now on, we suppose the following assumption are valid.

\begin{assumption}\label{ass}
Suppose Conditions \eqref{Cond-1-t}--\eqref{Cond-2-t} and one of the following conditions are valid.
\begin{itemize}
\item[(i)] $T\le T_0$, where $T_0$ be a sufficiently small constant;
\item[(ii)] Assumption \ref{ass:large-t} holds and $T\ge T_1$ where $T_1$ be a sufficiently large constant. 
\end{itemize}
\end{assumption}

 
\section{Regularization and convergence analysis}\label{sec:3}
In practice, the observational data often suffers from noise, i.e., \eqref{eqn:noisy data}.
In this section, we study a simple regularization scheme by using the quasi boundary value method. 
Let $\tud(t) \in \dH1$ be the regularizing solution such that
\begin{equation}\label{eqn:fde:reg:noisy}
\begin{aligned}
\partial_t^\alpha \tud(t) + A(t) \tud(t) &= 0 ~~  \,\,\,\forall\, t\in(0,T] \quad \text{with}
\quad \text{with} \quad \gamma\tud(0) + \tud(T)&= g^\delta.
\end{aligned}
\end{equation}
where $\gamma$ denotes a positive regularization parameter. 
To derive an error estimate for  $\tud(0) - u(0)$, we introduce 
 an auxiliary function $\tu(t) \in \dH1$ satisfying
\begin{align}\label{eqn:fde-back-reg}
\partial_t^\alpha \tu(t) + A(t) \tu(t) = 0 \,\,\,\forall\, t\in(0,T] \quad \text{with} \quad \gamma\tu(0) + \tu(T)&= g.
\end{align}
Then using the solution representation
\begin{align*}
\tu(T) = F(T;T)\tu(0) + \int_0^T E(T-s;T) (A(T) - A(s)) \tu(s) \,\d s
\end{align*}
we have the relation
\begin{equation*} 
\gamma \tu(0) +  F(T;T)\tu(0) + \int_0^T E(T-s;T) (A(T) - A(s)) \tu(s) \,\d s = g.
\end{equation*}
Therefore, we derive
\begin{align}\label{eqn:reg-sol-0}
\begin{aligned}
 \tu(0) &=(\gamma I + F(T;T))^{-1} \Big[ g - \int_0^T E(T-s;T) (A(T) - A(s)) \tu(s) \,\d s  \Big].
\end{aligned}
\end{align}
Similarly, we have
\begin{align}\label{eqn:reg-sol-d}
\begin{aligned}
 \tud(0) &=(\gamma I + F(T;T))^{-1} \Big[ g^\delta - \int_0^T E(T-s;T) (A(T) - A(s)) \tud(s) \,\d s  \Big].
\end{aligned}
\end{align} 

 We begin with the following  lemma on solution operator with fixed-time operator $A(T)$.
 These estimates have been proved in \cite[Lemma 3.3]{ZhangZhou:2021} by means of spectral decomposition. 
\begin{lemma}
\label{lem:op-reg}
Let $0\le p\le q\le 2+p$. Then there holds the estimates for any $\gamma\in (0,1]$
\begin{equation*}
\|(\gamma I+F(T;T))^{-1}\|_{\dH p}\le c (1+T^\alpha)^{\frac{q-p}{2}} \gamma^{-(1+\frac{p-q}{2})}\|v\|_{\dH q}
\quad\text{and}\quad \|F(T;T)(\gamma I+F(T;T))^{-1}\|\le c.
\end{equation*}
All the constants are independent of $p$, $q$, $T$ and $\gamma$.
\end{lemma}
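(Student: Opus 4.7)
The plan is to reduce both estimates to pointwise bounds on spectral multipliers. Since $A(T)$ is a selfadjoint positive operator on $L^2(\Omega)$ with a compact inverse, let $\{(\lambda_j,\varphi_j)\}_{j\ge 1}$ denote its eigenpairs, so that $\{\varphi_j\}$ forms an orthonormal basis of $L^2(\Omega)$ and $\lambda_j\ge \lambda_1>0$. Applying Laplace inversion to the contour integral defining $F(T;T)$ in \eqref{eqn:sol-op-1}, the operator diagonalizes as
$$
F(T;T)v = \sum_{j\ge 1} E_{\alpha,1}(-\lambda_jT^\alpha)\,(v,\varphi_j)\,\varphi_j,
$$
and hence so does $(\gamma I+F(T;T))^{-1}$, with eigenvalues $(\gamma+E_j)^{-1}$, where I write $E_j := E_{\alpha,1}(-\lambda_jT^\alpha)$. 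Since the norms $\dH p$ and $\dH q$ can be defined by the eigensystem of $A(T)$ (the resulting norms are $T$-uniformly equivalent to the ones built from the reference $A(t_*)$), the bound
$\|(\gamma I+F(T;T))^{-1}v\|_{\dH p}\le c(1+T^\alpha)^{\frac{q-p}{2}}\gamma^{-(1+\frac{p-q}{2})}\|v\|_{\dH q}$
reduces, by Parseval, to the pointwise inequality
$$
\frac{\lambda_j^{-r/2}}{\gamma+E_j}\le c\,(1+T^\alpha)^{r/2}\,\gamma^{-(2-r)/2},\qquad r:=q-p\in[0,2],
$$
uniformly in $j$, $\gamma\in(0,1]$, and $T>0$.

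The main analytic input is the classical two-sided Mittag--Leffler bound
$$
\frac{c}{1+x}\le E_{\alpha,1}(-x)\le 1,\qquad \forall\, x\ge 0,
$$
for $\alpha\in(0,1)$, which will convert the above multiplier inequality into an algebraic one. Using the two trivial lower bounds $\gamma+E_j\ge\gamma$ and $\gamma+E_j\ge E_j$, I split powers as
$$
(\gamma+E_j)^2=(\gamma+E_j)^{2-r}(\gamma+E_j)^r\ge \gamma^{2-r}E_j^{r}\ge c^r\,\gamma^{2-r}(1+\lambda_jT^\alpha)^{-r}.
$$
Inverting and using the elementary identity $\lambda_j^{-r}(1+\lambda_jT^\alpha)^r=(\lambda_j^{-1}+T^\alpha)^r$ together with $\lambda_j^{-1}\le\lambda_1^{-1}$ yields
$$
\frac{\lambda_j^{-r}}{(\gamma+E_j)^2}\le c^{-r}\,\gamma^{-(2-r)}(\lambda_j^{-1}+T^\alpha)^r\le c'\,\gamma^{-(2-r)}(1+T^\alpha)^r,
$$
and taking square roots gives the desired multiplier inequality, hence the first claim. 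Note that the constant depends on $\lambda_1$ only, not on $T$ or $\gamma$, which matches the statement.

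For the second estimate, the spectral multiplier of $F(T;T)(\gamma I+F(T;T))^{-1}$ is $E_j/(\gamma+E_j)\in(0,1)$, since $E_j\in(0,1]$ and $\gamma>0$. The operator norm is therefore bounded by $1$ uniformly in $T$ and $\gamma$, giving the claim immediately.

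The main obstacle, and the only nontrivial point, is obtaining the correct interpolation between the two endpoint regimes $r=0$ (where the bound degenerates to the trivial $\gamma^{-1}$) and $r=2$ (where it degenerates to $(1+T^\alpha)$). This is exactly what the weighted splitting $(\gamma+E_j)^{2-r}(\gamma+E_j)^r$ combined with the Mittag--Leffler lower bound achieves; any cruder use of $\gamma+E_j\ge\gamma$ alone would produce the pessimistic rate $\gamma^{-1}$, and any use of $\gamma+E_j\ge E_j$ alone would lose the $\gamma$-dependence entirely. The algebraic identity $\lambda_j^{-r}(1+\lambda_jT^\alpha)^r=(\lambda_j^{-1}+T^\alpha)^r$ is what converts the mixed expression into the clean $T$-dependence $(1+T^\alpha)^r$.
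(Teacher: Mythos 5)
Your argument is correct: diagonalizing $F(T;T)$ over the eigenbasis of $A(T)$, reducing to the multiplier bound $\lambda_j^{-(q-p)/2}(\gamma+E_{\alpha,1}(-\lambda_jT^\alpha))^{-1}\le c(1+T^\alpha)^{(q-p)/2}\gamma^{-(2-(q-p))/2}$, and obtaining it via the splitting $(\gamma+E_j)^{2-r}(\gamma+E_j)^r\ge\gamma^{2-r}E_j^r$ together with the two-sided Mittag--Leffler bound is exactly the spectral-decomposition route the paper relies on (it defers to \cite[Lemma 3.3]{ZhangZhou:2021}, which is proved "by means of spectral decomposition"). Your write-up is essentially a self-contained version of that proof, including the correct handling of the $T$-dependence through $\lambda_j^{-r}(1+\lambda_jT^\alpha)^r=(\lambda_j^{-1}+T^\alpha)^r$.
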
 
Also we need the following regularity of the regularized solution.
\begin{lemma}\label{lem:reg-back}
Let $\tu(t)$ be the solution to \eqref{eqn:fde-back-reg}.
Suppose Conditions \eqref{Cond-1-t}--\eqref{Cond-2-t} and one of the following conditions are valid.
\begin{itemize}
\item[(i)] $T\le T_0$, where $T_0$ be a sufficiently small constant;
\item[(ii)] Assumption \ref{ass:large-t} holds and $T\ge T_1$ where $T_1$ be a sufficiently large constant. 
\end{itemize}
Then there holds for any $p\in [0,2]$, 
\begin{equation*}
\|\tu(0)\|_{\dH p} \le c \gamma^{-\frac p2}\|u_0\|_{L^2\II}
\end{equation*}
where the constant $c$ depends on $T_0$ and $T_1$.
\end{lemma}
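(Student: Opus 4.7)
The plan is to start from the representation \eqref{eqn:reg-sol-0}, substitute the forward formula
\[
g = F(T;T)u_0 + \int_0^T E(T-s;T)\bigl(A(T)-A(s)\bigr)u(s)\,\d s,
\]
and then run the argument in parallel with the stability proofs in Theorems \ref{thm:stab-small} and \ref{thm:stab-big}. This substitution yields
\[
\tu(0) = (\gamma I+F(T;T))^{-1}F(T;T)u_0 + (\gamma I+F(T;T))^{-1}\!\int_0^T\! E(T-s;T)\bigl(A(T)-A(s)\bigr)\bigl(u(s)-\tu(s)\bigr)\,\d s.
\]
The key observation is that $\tu(t)$ itself solves the forward subdiffusion equation \eqref{eqn:fde-t} on $(0,T]$ with initial value $\tu(0)$, so Lemma \ref{lemma:prior-estimate} applies to $\tu$ verbatim with $u_0$ replaced by $\tu(0)$.

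First I would handle the endpoint $p=0$. The leading term is bounded by $c\|u_0\|_{L^2(\Omega)}$ using the uniform bound $\|F(T;T)(\gamma I+F(T;T))^{-1}\|\le c$ from Lemma \ref{lem:op-reg}. For the integral I would invoke Lemma \ref{lem:op-reg} with $p=0,q=2$ to dispose of the resolvent without introducing any power of $\gamma$, at the price of an $\dH 2$ norm on the integral. Bounding $\|A(T)E(T-s;T)\|$ via Lemma \ref{lem:op}(i) and the perturbation via Lemma \ref{lem:conti-A} reduces the integrand to $c(T-s)^{-1}\min(1,T-s)(\|u(s)\|_{\dH 2}+\|\tu(s)\|_{\dH 2})$, and Lemma \ref{lemma:prior-estimate} then produces
\[
\|\tu(0)\|_{L^2(\Omega)} \le c\|u_0\|_{L^2(\Omega)} + c\,\Phi(T)\bigl(\|u_0\|_{L^2(\Omega)}+\|\tu(0)\|_{L^2(\Omega)}\bigr),
\]
with $\Phi(T)\sim e^{cT}T^{1-\alpha}$ in case (i) of Assumption \ref{ass}.

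The main obstacle is to absorb $\|\tu(0)\|_{L^2(\Omega)}$ into the left side, which demands $\Phi(T)<1/2$. In case (i) this is exactly the smallness condition \eqref{cond:T0} already used in Theorem \ref{thm:stab-small}. In case (ii), following Theorem \ref{thm:stab-big}, I would split the $s$-integral at $T/2$ and use Lemma \ref{lem:cond-AT} combined with the sharper bound $\|A(T)^2 E(T-s;T)\|\le c(T-s)^{-1-\alpha}$ from Lemma \ref{lem:op}(i) and the refined smoothing \eqref{eqn:est-u-2} applied to both $u$ and $\tu$; this yields a $\Phi(T)\to 0$ as $T\to\infty$, and the largeness condition \eqref{cond:T1} closes the absorption. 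Either way, $\|\tu(0)\|_{L^2(\Omega)}\le c\|u_0\|_{L^2(\Omega)}$.

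For general $p\in(0,2]$ I would take the $\dH p$ norm in the same decomposition and apply Lemma \ref{lem:op-reg} with this $p$ and $q=2$; this yields exactly the desired $\gamma^{-p/2}$ factor. For the leading term, Lemma \ref{lem:op}(i) gives $\|F(T;T)u_0\|_{\dH 2}\le c\|u_0\|_{L^2(\Omega)}$ (absorbing the $T^{-\alpha}$ smoothing factor into $c$, since $T$ is fixed), so the contribution is $c\gamma^{-p/2}\|u_0\|_{L^2(\Omega)}$. For the integral, the same $\dH 2$ bound developed in the $p=0$ step, now combined with the already-established estimate $\|\tu(0)\|_{L^2(\Omega)}\le c\|u_0\|_{L^2(\Omega)}$, delivers the remaining $c\gamma^{-p/2}\|u_0\|_{L^2(\Omega)}$ contribution without requiring any further absorption.
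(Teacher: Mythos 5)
Your argument is correct, and for the endpoint $p=0$ it is essentially the paper's own proof: the same representation through $(\gamma I+F(T;T))^{-1}$, the same reduction of the perturbation integral to $c\int_0^T\|\cdot\|_{\dH 2}\,\d s$ via Lemma \ref{lem:op}(i) and Lemma \ref{lem:conti-A}, and the same absorption under the smallness condition \eqref{cond:T0} (resp.\ the $[0,T/2]$--$[T/2,T]$ splitting of Theorem \ref{thm:stab-big} with Lemma \ref{lem:cond-AT} for large $T$); your substitution of the forward formula for $g$ into \eqref{eqn:reg-sol-0}, which replaces the pair $(g,\tu)$ by the single difference $u-\tu$, is only a cosmetic variant of keeping $\|(\gamma I+F)^{-1}g\|\le c(1+T^\alpha)\|g\|_{\dH 2}\le c_T\|u_0\|_{L^2\II}$ as the paper does. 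Where you genuinely diverge is on $p\in(0,2]$. The paper treats $p=2$ by a second, separate absorption: it pulls $F(T;T)^{-1}$ onto the integral term, which puts $A(T)^2E(T-s;T)$ with its $(T-s)^{-1-\alpha}$ singularity into the integrand, forces the bound $\|\tu(s)\|_{\dH2}\le ce^{cs}s^{-\alpha}\|\tu(0)\|_{\dH2}$, and requires a fresh smallness/largeness condition to absorb $\|\tu(0)\|_{\dH2}$ into the left-hand side (intermediate $p$ then being implicit by interpolation). You instead measure the whole bracket in $\dH 2$ first --- costing only one power of $A(T)$ on $E$ and hence the harmless kernel $(T-s)^{-1}\min(1,T-s)$ --- and then apply Lemma \ref{lem:op-reg} with $q=2$ to extract $\gamma^{-p/2}$ in one stroke for every $p\in[0,2]$; since $\|\tu(s)\|_{\dH2}$ is controlled via Lemma \ref{lemma:prior-estimate} by $\|\tu(0)\|_{L^2\II}$, which the $p=0$ step has already bounded by $c\|u_0\|_{L^2\II}$, no second absorption and no interpolation are needed. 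This is a cleaner route to the identical estimate, and the constants it produces depend on $T$ in the same benign way as the paper's once $T$ is fixed under Assumption \ref{ass}.
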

\begin{proof}
By means of the representation \eqref{eqn:reg-sol-0}, Theorem \ref{thm:reg-u} and Lemma \ref{lem:op-reg},
\begin{align*}
\|\tu(0)\|_{L^2\II}&= \Big\|(\gamma I+F(T;T))^{-1}\Big(g - \int_0^T E(T-s;T)(A(T)-A(s))\tu(s)\d s\Big)\Big\|_{L^2\II}\\
&\le c(1+T^\al)\|g\|_{\dH2}+\|F(T;T)^{-1} \int_0^TE(T-s;T)(A(T)-A(s))\tu(s)\d s\|_{L^2\II}\\
&\le c_T\|u_0\|_{L^2\II}+\|F(T;T)^{-1} \int_0^TE(T-s;T)(A(T)-A(s))\tu(s)\d s\|_{L^2\II}
 \end{align*}
Then the desired result with $p=0$ 
follows immediately from the proof of theorems \ref{thm:stab-small} and \ref{thm:stab-big}.

Next, we turn to the case that  $p=2$. Similarly, we apply the representation \eqref{eqn:reg-sol-0}
and Lemma \ref{lem:op-reg} again to obtain  
\begin{align*}
\| \tu(0)\|_{\dH2}
&\le  c \Big\|A(T)(\gamma I+F(T;T))^{-1}\Big(g  - \int_0^T E(T-s;T)(A(T)-A(s))\tu(s)\d s\Big)\Big\|_{L^2\II}\\
 &\le c\gamma^{-1} \|g \|_{\dH 2} +\int_0^T \|F(T;T)^{-1}  A(T) E(T-s;T)(A(T)-A(s))\tu(s)\|_{L^2\II} \d s \\
  &\le c_T \gamma^{-1}  \| u_0 \|_{L^2\II} +c(1+T^\alpha)\int_0^T \|   A(T)^2 E(T-s;T)(A(T)-A(s))\tu(s)\|_{L^2\II} \d s.
\end{align*}
Using  Lemma \ref{lemma:prior-estimate} and Poincare inequality, we have
\begin{align}\label{eqn:est-tu-a}
\| \tu(t)\|_{\dH2} \le  ce^{ct} t^{-\alpha} \|  \tu(0) \|_{\dH2} 
\quad \text{and}\quad \| \tu(t)\|_{\dH 2} \le c  t ^{-(1-\epsilon)\alpha}\|\tu(0)\|_{\dH2},
\end{align}
with any small parameter $\epsilon>0$ and $t>0$,
and all the positive constants $c$ in above estimates are independent of $t$ and $T$.
Next, we repeat the argument in theorems \ref{thm:stab-small} and \ref{thm:stab-big}.
Now Lemmas \ref{lem:op} and \ref{lem:conti-A} (with $p=0$) imply that  
\begin{align*}
\|\tu(0)\|_{\dH 2}  &\le c\gamma^{-1} T^{-\alpha}\| u_0 \|_{L^2\II} 
+c(1+T^\alpha)\int_0^T \|   A(T)^2 E(T-s;T)(A(T)-A(s))\tu(s)\|_{L^2\II} \d s \\
&\le c\gamma^{-1} T^{-\alpha}\| u_0 \|_{L^2\II}   + c(1+T^\alpha) 
\int_0^T (T-s)^{-\alpha}\| \tu(s)\|_{\dH2} \d s\\
&\le c\gamma^{-1} T^{-\alpha}\| u_0 \|_{L^2\II}   + c(1+T^\alpha) 
\int_0^T (T-s)^{-\alpha} e^{cs}s^{-\alpha} \| \tu(0)\|_{\dH2} \d s
\end{align*}
We combine this and \eqref{eqn:est-tu-a} to arrive at
\begin{align*}
\|\tu(0)\|_{\dH 2}  \le c\gamma^{-1} T^{-\alpha}\| u_0 \|_{L^2\II}+ c(1+T^\al)T^{1-2\al}e^{cT}\|\tu(0)\|_{\dH 2}.
\end{align*}
Then by choosing  small $T_0$ such that $c(1+T_0^\al)T_0^{1-2\al}e^{cT_0}<\frac 12$,  we arrive at 
\begin{equation*}
\|\tu(0)\|_{\dH 2}\le c\gamma^{-1} T^{-\alpha }\|u_0\|_{L^2\II}\qquad \text{for all}~~T\in(0,T_0).
\end{equation*}
Next we consider the case that $T$ is sufficiently large, and we let Assumption \ref{ass:large-t} be valid.
Then we apply Lemma \ref{lem:cond-AT} with $p=0$ to arrive at
$$\|(A(t) - A(s))v\|_{L^2\II} \le c \min(t,s)^{-\kappa\alpha(1+\epsilon)}|t-s|^{\alpha(1+\epsilon)}\big)\|v\|_{\dH2} $$
for sufficiently small $\epsilon$.
This together with Lemma \ref{lem:op} and the estimate \eqref{eqn:est-tu-a}  lead to
\begin{align*}
\|  A(T)^2 E(T-s;T)\big(  A(T) -A(s) \big) \tu(s)   \|_{L^2\II}
&\le \|  A(T)^2 E(T-s;T)\|  \, \| \big(  A(T) -A(s) \big) \tu(s)   \|_{L^2\II}\\
&\le c T^{ -\kappa\alpha(1+\epsilon)}  (T-s)^{-1+\epsilon\alpha}  \, \|  \tu(s) \|_{\dH2} \\
&\le c T^{-\kappa\alpha(1+\epsilon)}  (T-s)^{-1+\epsilon\alpha}  \, s^{-(1-\epsilon)\al}\|   \tu(0) \|_{\dH 2}
\end{align*}
for all $s\in[T/2,T]$. Then we arrive at 
\begin{align*}
\int_{{T}/2}^{T}   \big\|   A(T)^2  E(T-s;T)\big(  A(T) -A(s) \big) \tu(s)\big\|_{L^2\II} \,\d s  
\le c T^{-\alpha-\kappa\alpha(1+\epsilon)+2\epsilon\alpha} \|   \tu(0) \|_{\dH 2} .
\end{align*}

Meanwhile, we apply Lemmas \ref{lem:conti-A} and \ref{lem:op} again to derive
$$\| A(T)^2 E(T-s;T) (A(t) - A(s))v  \| \le c (T-s)^{-1-\alpha} \|  v \|_{\dH2}  \quad \text{for all}~~ s\in(0,T/2].$$
This together with the estimate \eqref{eqn:est-tu-a} leads to
\begin{align*}
 &\quad \int_0^{T/2}    \big\|   A(T)^2  E(T-s;T)\big(  A(T) -A(s) \big) \tu(s)\big\|_{L^2\II} \,\d s  \\
 &\le c \int_0^{T/2} (T-s)^{-1-\alpha} s^{-(1-\epsilon)\alpha}   \,\d s \| \tu(0) \|_{\dH2}
\le c T^{-(2-\epsilon)\alpha}\| u(0) \|_{\dH 2}.
\end{align*}
To sum up, we arrive at the estimate
\begin{align*}
\| u_0 \|_{L^2\II}
 & \le  c\gamma^{-1} T^{-\alpha}\| u_0 \|_{L^2\II} 
 + c (1+T^\alpha) (T^{-\kappa\alpha(1+\epsilon)-\alpha+2\epsilon\alpha}  + T^{-(2-\epsilon)\alpha})\| u_0 \|_{L^2\II}
\end{align*}
Then choosing a sufficiently small $\epsilon$, there exists $T_1>1$ sufficiently large such that
\begin{equation}
\label{cond:T1}
c  (1+T_1^\alpha) (T_1^{-\kappa\alpha(1+\epsilon)-\alpha+2\epsilon\alpha}  + T_1^{-(2-\epsilon)\alpha}) = \frac 12
\end{equation}
and hence for any $T\ge T_1$, there holds the desired stability estimate for $p=2$.
 \end{proof}
 
 The following lemma is about the estimate of the regularization with the backward solution.
\begin{lemma}\label{lem:err-reg0}
Let $u$ and $\tu$ be the solutions to the backward problem \eqref{eqn:fde-back} 
and regularized problem \eqref{eqn:fde-back-reg}, respectively. 
Suppose Assumption \ref{ass} is valid. Then if $u_0\in \dH q$ with $q\in (0,2]$ there holds 
\begin{equation*}
 \| \tu(0) - u(0)  \|_{L^2\II} \le c \gamma^{\frac q2} \|  u_0 \|_{\dH q}.
 \end{equation*}
where the constant $c$ depends on $T_0$ and $T_1$. Moreover, for $u_0\in L^2\II$, there holds
\begin{equation*}
\lim_{\gamma\rightarrow0^+}\|{u}_\gamma(0) -  u(0)\|_{L^2\II} =0 .
\end{equation*}
\end{lemma}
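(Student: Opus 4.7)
The plan is to analyze the error function $w(t) = \tu(t) - u(t)$, which solves the homogeneous equation $\partial_t^\alpha w + A(t)w = 0$ on $(0,T]$ with terminal condition $w(T) = \tu(T) - g = -\gamma\tu(0)$. Starting from the representation \eqref{eqn:reg-sol-0} for $\tu(0)$ and the identity $g = F(T;T)u_0 + \int_0^T E(T-s;T)(A(T)-A(s))u(s)\,\d s$ obtained from \eqref{eqn:Sol-expr-u} at $t=T$, and using the algebraic simplification $(\gamma I + F(T;T))^{-1}F(T;T) - I = -\gamma(\gamma I + F(T;T))^{-1}$ (valid because $F(T;T)$ commutes with $(\gamma I + F(T;T))^{-1}$), I derive the key master identity
\begin{equation*}
w(0) = -\gamma(\gamma I + F(T;T))^{-1}u_0 \;-\; (\gamma I + F(T;T))^{-1}\int_0^T E(T-s;T)(A(T)-A(s))w(s)\,\d s.
\end{equation*}

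For the first term, Lemma \ref{lem:op-reg} with the parameter choice $(p,q) = (0,q)$ directly yields the target bound $\gamma\|(\gamma I + F(T;T))^{-1}u_0\|_{L^2\II}\le c(1+T^\alpha)^{q/2}\gamma^{q/2}\|u_0\|_{\dH q}$. For the second term, Lemma \ref{lem:op-reg} with $(p,q)=(0,2)$ bounds it by $c(1+T^\alpha)\|\int_0^T E(T-s;T)(A(T)-A(s))w(s)\,\d s\|_{\dH 2}$. Moving one power of $A(T)$ inside the integral, the singularity $\|A(T)E(T-s;T)\|\le c(T-s)^{-1}$ from Lemma \ref{lem:op}(i) is cancelled by the factor $\min(1,T-s)$ from Lemma \ref{lem:conti-A} (taken with $p=0$), leaving the bound $c(1+T^\alpha)\int_0^T\|w(s)\|_{\dH 2}\,\d s$. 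Since $w$ itself solves the homogeneous subdiffusion problem with initial datum $w(0)$, Lemma \ref{lemma:prior-estimate} supplies $\|w(s)\|_{\dH 2}\le c e^{cs} s^{-\alpha}\|w(0)\|_{L^2\II}$, so the second term is bounded by $c(1+T^\alpha)e^{cT}T^{1-\alpha}\|w(0)\|_{L^2\II}$.

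The main obstacle is the absorption of $\|w(0)\|_{L^2\II}$ into the left-hand side, which differs between the two cases of Assumption \ref{ass}. Under case (i), the prefactor $c(1+T^\alpha)e^{cT}T^{1-\alpha}$ can be made strictly less than $1/2$ by choosing $T_0$ small enough, so absorption is immediate and mirrors the proof of Theorem \ref{thm:stab-small}. Under case (ii) with $T\ge T_1$, the exponential $e^{cT}$ is no longer benign; following the large-$T$ strategy of Theorem \ref{thm:stab-big}, I would split the integral as $\int_0^{T/2}+\int_{T/2}^T$, replace Lemma \ref{lem:conti-A} by the sharper perturbation estimate in Lemma \ref{lem:cond-AT}, and invoke the refined a priori estimate \eqref{eqn:est-u-2} applied to $w$. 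A suitable small $\epsilon$ yields a prefactor $c(1+T^\alpha)(T^{-\kappa\alpha(1+\epsilon)-\alpha+2\epsilon\alpha}+T^{-(2-\epsilon)\alpha})$ that drops below $1/2$ for $T_1$ large, completing the absorption in this regime.

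For the convergence statement when only $u_0\in L^2\II$, I would proceed by density. Fix any $q\in(0,2]$ and pick a sequence $u_0^n\in\dH q$ with $u_0^n\to u_0$ in $L^2\II$, letting $u^n$ and $\tu_\gamma^n$ denote the corresponding solutions of \eqref{eqn:fde-back} and \eqref{eqn:fde-back-reg}. By linearity in the data, Lemma \ref{lem:reg-back} (with $p=0$) applied to the difference gives $\|\tu_\gamma(0)-\tu_\gamma^n(0)\|_{L^2\II}\le c\|u_0-u_0^n\|_{L^2\II}$ uniformly in $\gamma$, while the first part of the present lemma yields $\|\tu_\gamma^n(0)-u_0^n\|_{L^2\II}\le c\gamma^{q/2}\|u_0^n\|_{\dH q}$. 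A triangle inequality, taking $n$ large first to control the density error and then sending $\gamma\to 0^+$, delivers the claimed vanishing limit.
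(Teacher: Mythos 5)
Your proposal is correct and follows essentially the same route as the paper: the same master identity $e(0)=(\gamma I+F(T;T))^{-1}\big[-\gamma u_0-\int_0^T E(T-s;T)(A(T)-A(s))e(s)\,\d s\big]$, the same use of Lemma \ref{lem:op-reg} to extract the $\gamma^{q/2}$ rate from the first term, the same absorption of the perturbation integral by repeating the small-$T$/large-$T$ arguments of Theorems \ref{thm:stab-small} and \ref{thm:stab-big}, and the same density argument for the $L^2$ convergence. The only difference is cosmetic: you spell out the absorption steps that the paper compresses into ``follows from the proof of Theorems \ref{thm:stab-small} and \ref{thm:stab-big},'' and you approximate by $\dH q$ rather than $\dH 2$ in the density step.
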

\begin{proof}
We let $ e:= \tu - u$, it would satisfy 
\begin{equation*}
\Dal  e+A(t)  e = 0,~~ \gamma  e(0) +  e(T) = -\gamma u_0,
\end{equation*}  
which further implies
\begin{equation}\label{repre:tilde-e}
  e(0) =(\gamma I + F(T;T))^{-1} \Big[-\gamma u(0) -     \int_0^T E(T-s;T) (A(T) - A(s))  e(s) \,\d s  \Big]
\end{equation}
 lemma \ref{lem:op-reg} implies its estimate that 
\begin{align*}
\|  e(0)\|_{L^2\II}&\le c\gamma^\frac{q}{2}\|u_0\|_{\dH q} + \|(\gamma I+F(T;T))^{-1}\int_0^T E(T-s;T)(A(T)-A(s))  e(s)\d s\|_{L^2\II}\\
&\le c\gamma^\frac{q}{2}\|u_0\|_{\dH q} + \|F(T;T)^{-1}\int_0^T E(T-s;T)(A(T)-A(s))  e(s)\d s\|_{L^2\II}.
\end{align*}
Then the desired result 
follows immediately from the proof of theorems \ref{thm:stab-small} and \ref{thm:stab-big}.

Next, we consider the case that $u_0 \in L^2\II$. 
For an arbitrary $\tilde u_0 \in \dH2$, let 
%
 $\tilde u(t)$ and $\tilde u_\gamma(t)$ be the functions  respectively satisfying 
\begin{align*} 
\partial_t^\alpha \tilde u(t) + A(t) \tilde u(t) = 0 \,\,\,\forall\, t\in(0,T] \quad \text{with} \quad  \tilde u (0)  = \tilde u_0,
\end{align*}
and
\begin{align*} 
\partial_t^\alpha \tilde u_\gamma(t) + A(t) \tilde u_\gamma(t) = 0 \,\,\,\forall\, t\in(0,T] \quad \text{with} \quad \gamma\tilde u_\gamma(0) + \tilde u_\gamma(T)&= \tilde u(T).
\end{align*}
We have proved that
\begin{equation*}
 \| \tilde u_\gamma(0) - \tilde u(0)  \|_{L^2\II} \le c \gamma \|  \tilde u_0 \|_{\dH 2}.
 \end{equation*}
 Meanwhile, using the argument in theorems \ref{thm:stab-small} and \ref{thm:stab-big}, we have
$$ \| \tilde u_\gamma(0) - u_\gamma(0) \|_{L^2\II} \le c \|  u_0 - \tilde u_0  \|_{L^2\II} \le c\epsilon. $$
 As a result, we apply triangle inequality to obtain
\begin{align*}  
\| u_\gamma(0) - u_0 \|_{L^2\II} &\le \|  u_0 - \tilde u_0  \|_{L^2\II} + \|  u_\gamma(0)  - \tilde u_\gamma(0)  \|_{L^2\II}
+ \|   \tilde u_\gamma(0) -  \tilde  u_0 \|_{L^2\II} \\
&\le c  \|  u_0 - \tilde u_0  \|_{L^2\II} + c \gamma \|  \tilde u_0 \|_{\dH 2}.
\end{align*}
Let $\epsilon$ be an arbitrarily small number.
Using the density of $\dH2$ in $L^2\II$, we choose $\tilde u_0$
such that $c  \|  u_0 - \tilde u_0  \|_{L^2\II} \le \frac{\epsilon}{2}$.
Moreover, let $\gamma_0$ be the constant that $c \gamma_0 \|  \tilde u_0 \|_{\dH 2} < \frac{\epsilon}{2}$.
Therefore, for all $\gamma\le \gamma_0$, we have
$ \|{u}_\gamma(0) -  u(0)\|_{L^2\II} \le \epsilon$. Then the proof is complete.
 \end{proof}

Then we are ready to state our main theorem to show the error for the regularizing solution $\tud(0)$.
 \begin{theorem}\label{thm:err-reg}
 Let $u$ and $\tud$ be the solutions to the backward problem \eqref{eqn:fde-back} 
and regularized problem \eqref{eqn:fde:reg:noisy}, respectively. 
Suppose Assumption \ref{ass} is valid.
 Then if $\| u_0 \|_{\dH q}\le c$ with $q\in (0,2]$ there holds 
\begin{equation*}
 \| \tud(0) - u(0)  \|_{L^2\II} \le c \,\Big(\delta \gamma^{-1} +  \gamma^{\frac q2}\Big).
\end{equation*}
Moreover, for $u_0\in L^2\II$, there holds
\begin{equation*}
\| \tud(0) -  u(0)\|_{L^2\II} \rightarrow 0\quad \text{as}~~\delta,~\gamma\rightarrow0 
~~\text{and}~~\frac\delta\gamma\rightarrow0.
\end{equation*}
\end{theorem}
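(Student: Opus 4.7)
The plan is to decompose the error through the noiseless regularized solution $\tu(0)$, writing
\begin{equation*}
\tud(0) - u(0) = \bigl(\tud(0) - \tu(0)\bigr) + \bigl(\tu(0) - u(0)\bigr).
\end{equation*}
The second piece is exactly what Lemma \ref{lem:err-reg0} controls, giving $c\gamma^{q/2}\|u_0\|_{\dH q}$, so the remaining task is to bound $\|\tud(0) - \tu(0)\|_{L^2\II}$ by $c\delta\gamma^{-1}$.

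To handle the first piece, let $e^\delta := \tud - \tu$. By linearity, $e^\delta$ satisfies $\partial_t^\alpha e^\delta + A(t)e^\delta = 0$ with the quasi-boundary condition $\gamma e^\delta(0) + e^\delta(T) = g^\delta - g$. Subtracting \eqref{eqn:reg-sol-0} from \eqref{eqn:reg-sol-d} (or repeating the derivation) gives the representation
\begin{equation*}
e^\delta(0) = (\gamma I + F(T;T))^{-1}\Bigl[(g^\delta - g) - \int_0^T E(T-s;T)(A(T)-A(s))e^\delta(s)\,\d s\Bigr].
\end{equation*}
For the data term, Lemma \ref{lem:op-reg} with $p=q=0$ immediately yields $\|(\gamma I + F(T;T))^{-1}(g^\delta-g)\|_{L^2\II} \le c\gamma^{-1}\delta$. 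For the integral term, I would factor $(\gamma I + F(T;T))^{-1} = F(T;T)(\gamma I + F(T;T))^{-1}\cdot F(T;T)^{-1}$ and use the uniform bound on $F(T;T)(\gamma I + F(T;T))^{-1}$ from Lemma \ref{lem:op-reg} together with Lemma \ref{lem:op}(iii), so the estimate reduces to bounding $\|F(T;T)^{-1}\int_0^T E(T-s;T)(A(T)-A(s))e^\delta(s)\,\d s\|_{L^2\II}$ exactly as in the stability proofs of Theorems \ref{thm:stab-small} and \ref{thm:stab-big}. Since $e^\delta$ solves the forward equation on $(0,T]$ with initial value $e^\delta(0)$, the a priori estimates \eqref{eqn:est-u-1} and \eqref{eqn:est-u-2} apply to it, and the perturbation analysis in Lemmas \ref{lem:conti-A} and \ref{lem:cond-AT} combined with Lemma \ref{lem:op} will deliver a bound of the form $c\,\rho(T)\|e^\delta(0)\|_{L^2\II}$, where Assumption \ref{ass} ensures $\rho(T)<1/2$ in either the small-$T$ or large-$T$ regime. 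Absorbing this term on the left then yields $\|e^\delta(0)\|_{L^2\II} \le c\gamma^{-1}\delta$, and combining with the Lemma \ref{lem:err-reg0} bound proves the first assertion.

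The main obstacle is precisely the absorption step: one must verify that the same smallness/largeness argument used for $u_0$ in Theorems \ref{thm:stab-small}--\ref{thm:stab-big} transfers verbatim to $e^\delta(0)$. The key observation making this work is that $e^\delta$ obeys the homogeneous forward equation with initial datum $e^\delta(0)$, so the regularity bounds used in the stability proofs do not depend on any additional assumption about the data $g^\delta-g$.

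For the second statement with $u_0 \in L^2\II$, I would apply the same decomposition. The first term satisfies $\|\tud(0)-\tu(0)\|_{L^2\II} \le c\delta\gamma^{-1} \to 0$ by the hypothesis $\delta/\gamma \to 0$, while the second term $\|\tu(0) - u(0)\|_{L^2\II} \to 0$ as $\gamma \to 0$ by the convergence statement already established in Lemma \ref{lem:err-reg0} via a density argument in $\dH 2 \subset L^2\II$. Adding the two pieces completes the proof.
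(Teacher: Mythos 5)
Your proposal is correct and follows essentially the same route as the paper: the same splitting through the noiseless regularized solution $\tu(0)$, the same representation of $\tud(0)-\tu(0)$ via $(\gamma I + F(T;T))^{-1}$, the bound $c\gamma^{-1}\delta$ on the data term from Lemma \ref{lem:op-reg}, and the reduction of the integral term to the absorption argument of Theorems \ref{thm:stab-small} and \ref{thm:stab-big}, with Lemma \ref{lem:err-reg0} handling the second piece and the $L^2\II$ convergence statement. Your remark that the a priori estimates apply to $e^\delta$ because it solves the homogeneous forward equation with initial datum $e^\delta(0)$ makes explicit the step the paper leaves implicit.
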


\begin{proof}
To show the error estimate, we consider the splitting
$$ \tud(t) - u(t) =  (\tud(t) - \tu(t)) +  (\tu(t) - u(t)) = \vartheta(t) + \varrho(t).$$
Using the solution representation \eqref{eqn:reg-sol-0} and \eqref{eqn:reg-sol-d}, we have
\begin{align*}
\| \vartheta(0) \|_{L^2\II}&\le \|(\gamma I + F(T;T))^{-1} (g - g^\delta)\|_{L^2\II}   \\
 &\quad +\|(\gamma I + F(T;T))^{-1}\int_0^T E(T-s;T) (A(T) - A(s)) \theta(s) \,\d s \|_{L^2\II}.
\end{align*}
Using Lemma \eqref{lem:op-reg}, we derive
\begin{align*}
\| \vartheta(0) \|_{L^2\II}&\le c \gamma^{-1} \| g - g^\delta \|_{L^2\II} 
+\| F(T;T)^{-1}\int_0^T E(T-s;T) (A(T) - A(s)) \theta(s) \,\d s \|_{L^2\II}\\
&\le c \gamma^{-1} \delta
+\| F(T;T)^{-1}\int_0^T E(T-s;T) (A(T) - A(s)) \theta(s) \,\d s \|_{L^2\II}.
\end{align*}
Applying the argument in theorems \ref{thm:stab-small} and \ref{thm:stab-big}, we 
conclude that $ \| \vartheta(0) \|_{L^2\II} \le c \gamma^{-1} \delta $. This estimate and
Lemma \ref{lem:err-reg0} lead to the desired result.
\end{proof}

\section{Fully discretization scheme and error analysis}\label{sec:4} 
In this section, we shall propose and analyze a completely discrete scheme for solving the 
backward problem. To begin with, we study the semidiscrete scheme using the finite element methods. 
The semidiscrete solution plays an important role in the analysis of completely discrete scheme.

\subsection{Semidiscrete scheme for solving the problem}
To begin with, we study the semidiscrete scheme using the finite element methods. 
Let  ${\{\mathcal{T}_h\}}_{0<h<1}$ be a  family
of shape regular and quasi-uniform partitions of the domain $\Omega$ into $d$-simplexes, called
finite elements, with $h$ denoting the maximum diameter of the elements.
We consider the finite element space $X_h$ defined by
\begin{equation}\label{eqn:fem-space}
  X_h =\left\{\chi\in C(\bar\Omega)\cap H_0^1: \ \chi|_{K}\in P_1(K),
 \,\,\,\,\forall K \in \mathcal{T}_h\right\}
\end{equation}
where $P_1(K)$ denotes the space of linear polynomials on $K$.
Then we define the $L^2(\Omega)$ projection $P_h:L^2(\Omega)\to X_h$, by
\begin{align*}
(P_h \psi,\chi) & =(\psi,\chi) \quad\forall \chi\in X_h,\psi\in L^2(\Omega).
\end{align*}

Then $P_h$  satisfies the following approximation properties \cite[Chapter 1]{Thomee:2006}
\begin{align}
\label{estimate:P_h}
\|P_hv-v\|_{\L2Om}+h\|\nabla(P_hv-v)\|_{\L2Om}\le ch^q\|v\|_{H^q(\Omega)},\ \forall v\in \dH q,\ q=1,2.
\end{align}
The semidiscrete standard Galerkin FEM of problem \eqref{eqn:fde-t} reads: find $u_h\in X_h$ such that 
\begin{equation}\label{eqn:fde-t-h}
(\Dal u_h(t),\chi)+(a(\cdot,t)\nabla u_h(t),\nabla \chi)=(f(\cdot,t),\chi),~~\forall \chi\in X_h,~~t\in(0,T],
\text{ with }u_h(0) = P_hu_0.
\end{equation}

We also need a time-dependent discrete elliptic operator $A_h(t):X_h\to X_h$ by 
\begin{equation*}
(A_h(t)v_h,\chi) = (a(\cdot,t)\nabla v_h,\nabla\chi),~~\forall v_h,\chi\in X_h.
\end{equation*}
With conditions \eqref{Cond-1-t}-\eqref{Cond-2-t}, $A_h(t)$ is bounded and invertible on $X_h$, and problem \eqref{eqn:fde-t-h} can be written as 
\begin{equation}
\label{eqn:fde-t-h-Ah}
\Dal u_h+A_hu_h = P_hf,~~\forall t\in(0,T],~~u_h(0) = P_hu_0.
\end{equation}
Besides, we have the following perturbation result, which has been proved in \cite[Remark 3.1]{JinLiZhou:2019}.
\begin{lemma}
\label{lem:cond-Ah}
Under condition \eqref{Cond-1-t}-\eqref{Cond-2-t}, there holds
\begin{align*}
\|(I-A_h(t)^{-1}A_h(s) v_h\|_{L^2\II}\le c\min(1,|t-s|)\| v_h\|_{L^2\II}
\end{align*}
\end{lemma}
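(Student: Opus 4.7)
The plan is to adapt the continuous argument behind Lemma \ref{lem:conti-A} (case $p=-2$) to the finite element setting via an Aubin--Nitsche duality argument. First I would rewrite $w_h := (I-A_h(t)^{-1}A_h(s))v_h = A_h(t)^{-1}(A_h(t)-A_h(s))v_h$, which lies in $X_h$ and satisfies the Galerkin identity
\[(a(\cdot,t)\nabla w_h,\nabla\chi_h) = ((a(\cdot,t)-a(\cdot,s))\nabla v_h,\nabla\chi_h),\qquad\forall\,\chi_h\in X_h.\]
The coefficient difference obeys $\|a(\cdot,t)-a(\cdot,s)\|_{W^{1,\infty}(\Omega)}\le c\min(1,|t-s|)$ by combining the uniform pointwise bounds in \eqref{Cond-1-t} with the Lipschitz--in--$t$ controls on $a$ and $\nabla_x a$ from \eqref{Cond-2-t}; this is where the factor $\min(1,|t-s|)$ ultimately originates.

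To estimate $\|w_h\|_{L^2\II}$ I would use duality: for arbitrary $\phi\in L^2\II$, let $\psi\in\dH 2$ solve $A(t)\psi=\phi$ (convexity of $\Omega$ gives full regularity $\|\psi\|_{\dH 2}\le c\|\phi\|_{L^2\II}$), and let $R_h^t\psi\in X_h$ be its Ritz projection with respect to the form $(a(\cdot,t)\nabla\cdot,\nabla\cdot)$. Writing $(w_h,\phi)=(w_h,A(t)\psi)$, integrating by parts (since $w_h\in H_0^1$ and $\psi\in\dH 2$), splitting $\nabla\psi=\nabla R_h^t\psi+\nabla(\psi-R_h^t\psi)$, and using Galerkin orthogonality of the Ritz projection tested against $w_h\in X_h$ would reduce the pairing to
\[(w_h,\phi) = ((a(\cdot,t)-a(\cdot,s))\nabla v_h,\nabla R_h^t\psi).\]

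I would then split this further as $((a(\cdot,t)-a(\cdot,s))\nabla v_h,\nabla\psi) + ((a(\cdot,t)-a(\cdot,s))\nabla v_h,\nabla(R_h^t\psi-\psi))$. For the first piece, since $\psi\in\dH 2$ a second integration by parts (using $v_h|_{\partial\Omega}=0$) transfers the gradient off $v_h$, producing $(v_h,\nabla\!\cdot\!((a(\cdot,t)-a(\cdot,s))\nabla\psi))$, which is bounded by $c\min(1,|t-s|)\|v_h\|_{L^2\II}\|\psi\|_{\dH 2}\le c\min(1,|t-s|)\|v_h\|_{L^2\II}\|\phi\|_{L^2\II}$ thanks to the $W^{1,\infty}$ bound on the coefficient difference.

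The main obstacle will be the second piece, because the Ritz error $R_h^t\psi-\psi$ does not have enough regularity for another integration by parts. Here my plan is to apply Cauchy--Schwarz together with the standard approximation estimate $\|\nabla(R_h^t\psi-\psi)\|_{L^2\II}\le ch\|\psi\|_{\dH 2}$ and the inverse inequality $\|\nabla v_h\|_{L^2\II}\le ch^{-1}\|v_h\|_{L^2\II}$ available on the quasi-uniform mesh; the two factors of $h$ will cancel, yielding the same bound $c\min(1,|t-s|)\|v_h\|_{L^2\II}\|\phi\|_{L^2\II}$. Summing the two pieces and taking the supremum over $\phi\in L^2\II$ with $\|\phi\|_{L^2\II}=1$ then gives the stated estimate.
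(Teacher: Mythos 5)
Your argument is correct, and all the essential ingredients match what the paper uses: note that the paper itself does not prove Lemma \ref{lem:cond-Ah} (it cites \cite[Remark 3.1]{JinLiZhou:2019}), but it does prove the twin statement, Lemma \ref{lem:cond-AhT}, and that proof is the right comparison point. The difference is organizational. You run a single Aubin--Nitsche duality directly on $w_h=(I-A_h(t)^{-1}A_h(s))v_h$: pair with $\phi$, lift to $\psi$ with $A(t)\psi=\phi$, use Galerkin orthogonality to replace $\nabla\psi$ by $\nabla R_h^t\psi$, and then split into a "continuous" part (handled by integrating by parts onto $v_h$ and the $W^{1,\infty}$ bound $\|a(\cdot,t)-a(\cdot,s)\|_{W^{1,\infty}}\le c\min(1,|t-s|)$) plus a Ritz-error part where $\|\nabla(R_h^t\psi-\psi)\|\le ch\|\psi\|_{\dH2}$ cancels against the inverse inequality $\|\nabla v_h\|\le ch^{-1}\|v_h\|$. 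The paper instead introduces the auxiliary \emph{continuous} elliptic solution $\phi$ of $(a(\cdot,t)\nabla\phi,\nabla\chi)=((a(\cdot,t)-a(\cdot,s))\nabla v_h,\nabla\chi)$, observes that $v_h-w_h$ is exactly the Ritz projection $R_h(t)\phi$, bounds $\|R_h(t)\phi-\phi\|_{L^2}\le ch\|\phi\|_{\dH1}$ (again absorbing $h$ via the inverse inequality applied to $\|v_h\|_{\dH1}$), and then performs the duality step on $\phi$ itself via $A(t)\xi=\phi$. Both routes rest on the same three pillars --- the discrete variational identity for the coefficient difference, Ritz-projection approximation combined with an inverse estimate, and $H^2$ elliptic regularity on the convex domain to move a derivative off $v_h$ --- so each buys the same thing; yours is marginally more compact since it avoids naming the intermediate function, while the paper's version isolates $\phi$ in a way that is convenient for reuse. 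One small point worth making explicit in your write-up: the pointwise bound on $\nabla\!\cdot\!\big((a(\cdot,t)-a(\cdot,s))\nabla\psi\big)$ uses both $|\nabla_x(a(t)-a(s))|\le c\min(1,|t-s|)$ and $|a(t)-a(s)|\le c\min(1,|t-s|)$, which indeed follow from \eqref{Cond-1-t}--\eqref{Cond-2-t} exactly as you indicate.
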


Next, we introduce a time-dependent Ritz projection operator $R_h(t):H_0^1\II\to X_h$:
\begin{equation}\label{eqn:Rh}
(a(\cdot,t)\nabla R_h(t)\fy,\nabla \chi)=(a(\cdot,t)\nabla\fy,\nabla \chi),~~\forall \fy\in H_0^1\II,~~\chi\in X_h.
\end{equation}
It is well-known that the Ritz projection satisfies the following approximation property 
\cite[p.99]{LuskinRannacher:1982}:
\begin{equation}
\label{estimate:Rh}
\|R_h(t)v-v\|_{L^2\II}+h\|\nabla(R_h(t)v-v)\|_{L^2\II}\le ch^q\|v\|_{H^q\II},~~\forall v \in \dH q,~~q=1,2.
\end{equation}

Next, with
Assumption \ref{ass:large-t}, we have an updated version of the discrete perturbation estimate.
\begin{lemma}\label{lem:cond-AhT}
With conditions \eqref{Cond-1-t}-\eqref{Cond-2-t} and Assumption \ref{ass:large-t}, we have 
for all $v_h \in X_h$
\begin{align*}
\|(I-A_h(t)^{-1}A_h(s) v_h\|_{L^2\II}\le c\min(1,\min(t,s)^{-\kappa}|t-s|)\|v_h\|_{L^2\II},~~\forall t,s>1.
\end{align*}
\end{lemma}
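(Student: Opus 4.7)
The plan is to run the same duality-plus-integration-by-parts argument used for the continuous analog (Lemma~\ref{lem:cond-AT}), but with a Ritz-projected dual test function to accommodate the discreteness. Since Lemma~\ref{lem:cond-Ah} already gives $\|(I - A_h(t)^{-1}A_h(s))v_h\|_{L^2\II} \le c\|v_h\|_{L^2\II}$, it suffices to establish the sharper bound
\begin{equation*}
  \|w_h\|_{L^2\II} \le c\,\min(t,s)^{-\kappa}|t-s|\,\|v_h\|_{L^2\II},
  \qquad w_h := A_h(t)^{-1}(A_h(t)-A_h(s))v_h \in X_h,
\end{equation*}
in the regime $t,s>1$. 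The key pointwise ingredient, obtained by integrating Assumption~\ref{ass:large-t} from $\min(t,s)$ to $\max(t,s)$ and using that $r\mapsto r^{-\kappa}$ is decreasing on $[1,\infty)$, is
\begin{equation*}
  \|a(\cdot,t) - a(\cdot,s)\|_{L^\infty\II} + \|\nabla_x(a(\cdot,t) - a(\cdot,s))\|_{L^\infty\II} \le c\,\min(t,s)^{-\kappa}|t-s|.
\end{equation*}

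For the duality step, I would let $\psi \in H^2\II\cap H_0^1\II$ solve $A(t)\psi = w_h$; by $H^2$ elliptic regularity on the convex polyhedral domain (uniform in $t$) there holds $\|\psi\|_{H^2\II} \le c\|w_h\|_{L^2\II}$. Since $w_h \in X_h$, integration by parts together with the Galerkin orthogonality $(a(\cdot,t)\nabla(\psi - R_h(t)\psi), \nabla w_h) = 0$ built into the Ritz projection gives
\begin{equation*}
  \|w_h\|_{L^2\II}^2 = (a(\cdot,t)\nabla w_h, \nabla\psi) = (a(\cdot,t)\nabla w_h, \nabla R_h(t)\psi) = ((a(\cdot,t)-a(\cdot,s))\nabla v_h, \nabla R_h(t)\psi),
\end{equation*}
where the last equality uses the defining identity $A_h(t)w_h = (A_h(t)-A_h(s))v_h$ with the test function $R_h(t)\psi \in X_h$.

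To finish, I would split $\nabla R_h(t)\psi = \nabla\psi + \nabla(R_h(t)\psi - \psi)$. For the smooth piece, integrating by parts in $\nabla v_h$ (the boundary term vanishes because $v_h|_{\partial\Omega}=0$) and invoking the pointwise bound above yields
\begin{equation*}
  |((a(\cdot,t)-a(\cdot,s))\nabla v_h, \nabla\psi)| \le c\,\min(t,s)^{-\kappa}|t-s|\,\|v_h\|_{L^2\II}\|\psi\|_{H^2\II}.
\end{equation*}
The main technical obstacle is the correction term $((a(\cdot,t)-a(\cdot,s))\nabla v_h, \nabla(R_h(t)\psi-\psi))$: a naive Cauchy--Schwarz estimate loses a factor $h^{-1}$ because $\|\nabla v_h\|_{L^2\II} \le ch^{-1}\|v_h\|_{L^2\II}$ by the inverse inequality. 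This is precisely compensated by the sharp first-order Ritz approximation $\|\nabla(R_h(t)\psi - \psi)\|_{L^2\II}\le ch\|\psi\|_{H^2\II}$ from \eqref{estimate:Rh}, so that the two factors of $h$ cancel and one again arrives at a bound of $c\min(t,s)^{-\kappa}|t-s|\|v_h\|_{L^2\II}\|\psi\|_{H^2\II}$. Combining the two pieces and absorbing $\|\psi\|_{H^2\II}\le c\|w_h\|_{L^2\II}$ yields the target estimate after dividing through by $\|w_h\|_{L^2\II}$; taking the minimum with the trivial bound from Lemma~\ref{lem:cond-Ah} then completes the proof.
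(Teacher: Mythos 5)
Your argument is correct, and it uses the same essential ingredients as the paper's proof --- a duality argument with the elliptic operator $A(t)$, integration by parts to shift the derivative off $v_h$ using the $W^{1,\infty}$ bound on $a(\cdot,t)-a(\cdot,s)$ coming from Assumption \ref{ass:large-t}, and the cancellation between the factor $h^{-1}$ from the inverse inequality and the factor $h$ from the first-order Ritz approximation \eqref{estimate:Rh} --- but it organizes them differently. The paper introduces the auxiliary \emph{continuous} elliptic solution $\phi$ with $(a(\cdot,t)\nabla\phi,\nabla\chi)=((a(\cdot,t)-a(\cdot,s))\nabla v_h,\nabla\chi)$, observes that the discrete perturbation equals $R_h(t)\phi$, and then splits it as $(R_h(t)\phi-\phi)+\phi$: the first piece is handled by \eqref{estimate:Rh} plus the inverse inequality, and $\|\phi\|_{L^2\II}$ is bounded by a separate duality argument with $A(t)\xi=\phi$. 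You instead run a single duality argument directly on the discrete quantity $w_h=(I-A_h(t)^{-1}A_h(s))v_h$, solving $A(t)\psi=w_h$ and splitting the test function as $\nabla R_h(t)\psi=\nabla\psi+\nabla(R_h(t)\psi-\psi)$. Your route is somewhat more streamlined (one dual problem instead of an auxiliary elliptic problem plus a second duality step), at the cost of needing the $H^2$ elliptic regularity bound $\|\psi\|_{H^2\II}\le c\|w_h\|_{L^2\II}$ uniformly in $t$ up front; the paper's route makes explicit the structural fact that the discrete perturbation is the Ritz projection of the continuous one, which cleanly separates the continuous perturbation estimate from the discretization effect. Both are valid, and both deliver the same constant structure $c\min(1,\min(t,s)^{-\kappa}|t-s|)$.
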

\begin{proof}
Let $w_h = A_h(t)^{-1}A_h(s)v_h$. Then we have $A_h(t)w_h = A_h(s)v_h$ and hence
\begin{equation*}
(a(\cdot,t)\nabla w_h,\nabla\chi_h) = (a(\cdot,s)\nabla v_h,\nabla \chi_h),~~\forall \chi_h\in X_h.
\end{equation*}
This further implies the relation
\begin{equation*}
(a(\cdot,t)\nabla (v_h-w_h),\nabla \chi_h) = ((a(\cdot,t)-a(\cdot,s))\nabla v_h,\nabla \chi_h),~~\forall \chi\in X_h.
\end{equation*}
Let $\phi$ be the weak solution to the following elliptic problem:
\begin{equation*}
(a(\cdot,t)\nabla \phi,\nabla \chi) = ((a(\cdot,t)-a(\cdot,s))\nabla v_h,\nabla \chi),~~\forall \chi\in \dH 1.
\end{equation*}
Then  Lax-Milgram lemma and Assumption  \ref{ass:large-t} implies the following \textsl{a priori} estimate 
\begin{equation*}
\|\phi\|_{\dH 1}\le c\|(a(\cdot,t)-a(\cdot,s))\nabla v_h\|_{L^2\II}\le c\min(1,\min(t,s)^{-\kappa}|t-s|)\|v_h\|_{\dH 1},
\end{equation*}
Using the fact that $w_h - v_h = R_h(t) \phi$, the approximation property \eqref{estimate:Rh}, and the inverse inequality, we derive
\begin{align*}
\|w_h-v_h-\phi\|_{L^2\II}&\le ch\|\fy\|_{\dH 1}\le ch\min(1,\min(t,s)^{-\kappa}|t-s|)\|v_h\|_{\dH 1} \\
&\le c\min(1,\min(t,s)^{-\kappa}|t-s|)\|v_h\|_{L^2\II}.
\end{align*}
According triangle inequality we have
\begin{align*}
\|w_h-v_h\|_{L^2\II}\le c\min(1,\min(t,s)^{-\kappa}|t-s|)\|v_h\|_{L^2\II}+\|\phi\|_{L^2\II}.
\end{align*}
Next, we apply the duality argument to derive a bound for $\|\phi\|_{L^2\II}$. Let $\xi \in \dH2$ be 
the function such that  $ A(t) \xi = \phi$. Then
\begin{align*}
\|  \phi \|_{L^2\II}^2 &= |(a(\cdot,t)\nabla\phi,\nabla\xi)|
 = |((a(\cdot,t)-a(\cdot,s))\nabla v_h,\nabla\xi)|\\
&\le  |(v_h,(a(\cdot,t)-a(\cdot,s)) \Delta\xi)| +  |(v_h,\nabla (a(\cdot,t)-a(\cdot,s)) \cdot\nabla \xi)|\\
 &\le c\min(1,\min(t,s)^{-\kappa}|t-s|) \|v_h\|_{L^2\II}\|\xi\|_{\dH 2}\\
&\le c\min(1,\min(t,s)^{-\kappa}|t-s|) \|v_h\|_{L^2\II}\|\phi\|_{L^2\II}.
\end{align*}
This completes the proof of the lemma.
\end{proof}
Next we derive some semidiscrete solution representation analogue to  \eqref{eqn:Sol-expr-u}, that is given any $t_*\in(0,T]$,
\begin{equation}
\label{eqn:Sol-expr-uh}
u_h(t) = F_h(t;t_*)u_h(0)+\int_0^t E_h(t-s;t_*)(P_hf(s)+(A_h(t_*)-A_h(s))u_h(s))ds
\end{equation} 
where the solution operators $F_h(t;t_*)$ and $E_h(t;t_*)$ can be written as 
\begin{equation}\label{eqn:op-h}
 F_h(t;t_*) = \frac{1}{2\pi\mathrm{i}} \int_{\Gamma_{\theta,\kappa}} e^{zt} z^{\alpha-1} (z^\alpha
 + A_h(t_*))^{-1}  \,\d z,~~\text{and}~~
E_h(t;t_*) = \frac{1}{2\pi\mathrm{i}} \int_{\Gamma_{\theta,\kappa}} e^{zt} (z^\alpha +A_h(t_*))^{-1}\,\d z.
\end{equation}
For any fixed $t_*$, the discrete operators $F_h(t;t_*)$ and $E_h(t;t_*)$ 
satisfy the following smoothing property, whose proof is identical to that of Lemma \ref{lem:op}.
\begin{lemma}\label{lem:op-d}
Let $F_h(t;t_*)$ and $E_h(t;t_*)$ be the discrete solution operators 
defined in \eqref{eqn:op-h} for any $t_*\in[0,T]$.
Then they satisfy the following properties for all $t>0$ and $v_h \in X_h$
\begin{itemize}
\item[$\rm(i)$] $\|A_h(t_*) F_h (t;t_*)v_h\|_{L^2\II} + t^{1-(2-k)\alpha} \| A_h(t_*)^k  E_h (t;t_*)v_h  \|_{L^2\II}
 \le c  t^{-\alpha} \|v_h\|_{L^2\II}$, with $k=1,2$;
\item[$\rm(ii)$] $\|F_h(t;t_*)v_h\|_{L^2\II}+ t^{1-\alpha}\|E_h(t;t_*)v_h\|_{L^2\II} \le c \min(1,  t^{-\alpha}) \|v_h\|_{L^2\II}$;
\item[$\rm(iii)$] $\|F_h(t; t_*)^{-1} v_h \|_{L^2\II} \le c (1+ t^\alpha) \| A_h(t_*) v_h \|_{L^2\II}$.
\end{itemize}
The constants in all above estimates are uniform in $t$, but they are only dependent of $t_*$ and $T$.
\end{lemma}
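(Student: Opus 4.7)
The plan is to mirror the proof of Lemma \ref{lem:op} step by step, with the only substantive new ingredient being a uniform-in-$h$ discrete resolvent estimate for $A_h(t_*)$. First I would record that, because $a(\cdot,t_*)$ is symmetric and coercive by \eqref{Cond-1-t}, the discrete operator $A_h(t_*):X_h\to X_h$ is selfadjoint and positive definite on $X_h$ endowed with the $L^2(\Omega)$ inner product. In particular its spectrum lies in $[c_0^{-1}\lambda,\infty)$ where $\lambda$ is the smallest Dirichlet eigenvalue of $-\Delta$. Exactly as in \eqref{eqn:resol}, this gives the uniform bound
\begin{equation*}
\|(z+A_h(t_*))^{-1}\|_{L^2(\Omega)\to L^2(\Omega)} \le c_\phi\min(|z|^{-1},\lambda^{-1}),
\qquad \forall z\in\Sigma_\phi,\,\,\forall\phi\in(0,\pi),
\end{equation*}
with a constant independent of $h$ (and of $t_*$, by \eqref{Cond-1-t}).

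For parts (i) and (ii), I would substitute this resolvent bound directly into the contour integral representations \eqref{eqn:op-h}. For (ii), on the contour $\Gamma_{\theta,\kappa}$ with the choice $\kappa\sim t^{-1}$, one estimates $\|(z^\alpha+A_h(t_*))^{-1}\|\le c|z|^{-\alpha}$ and integrates against $|e^{zt}|$; the small-circle piece and the two rays yield the $t^{\alpha-1}$ bound on $E_h$ and the boundedness of $F_h$, while fixing $\kappa\sim 1$ gives the $\min(1,t^{-\alpha})$ decay for large $t$. For (i), one uses $\|A_h(t_*)^k(z^\alpha+A_h(t_*))^{-1}\|\le c|z|^{(k-1)\alpha}$ obtained by writing $A_h(t_*)(z^\alpha+A_h(t_*))^{-1}=I-z^\alpha(z^\alpha+A_h(t_*))^{-1}$ and iterating, and then performs the same contour estimate. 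These are the identical computations that prove Lemma \ref{lem:op}; the point is that every constant is controlled in terms of the resolvent bound above and hence is uniform in $h$.

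For (iii) I would pass to spectral form. Let $\{(\lambda_j^h,\varphi_j^h)\}_{j=1}^{\dim X_h}$ be an orthonormal eigensystem for $A_h(t_*)$. Closing the contour in \eqref{eqn:op-h} picks up the residues, yielding
\begin{equation*}
F_h(t;t_*)v_h = \sum_j E_{\alpha,1}(-\lambda_j^h t^\alpha)(v_h,\varphi_j^h)\varphi_j^h,
\end{equation*}
so $F_h(t;t_*)$ is invertible on $X_h$ (the Mittag--Leffler values are strictly positive for $\lambda_j^h t^\alpha\ge 0$ and $\alpha\in(0,1)$). Using the lower bound $E_{\alpha,1}(-\lambda_j^h t^\alpha)\ge c/(1+\lambda_j^h t^\alpha)$ (see \cite[Thm.~4.1]{SakamotoYamamoto:2011}, which is a property of the scalar function and transfers verbatim to the discrete eigenvalues), I obtain
\begin{equation*}
\|F_h(t;t_*)^{-1}v_h\|_{L^2(\Omega)}^2 \le c\sum_j (1+\lambda_j^h t^\alpha)^2(v_h,\varphi_j^h)^2 \le c(1+t^\alpha)^2\|A_h(t_*)v_h\|_{L^2(\Omega)}^2.
\end{equation*}

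I do not expect any real obstacle: the whole content is that $A_h(t_*)$ inherits the selfadjoint, positive definite structure and resolvent estimate of $A(t_*)$ uniformly in $h$, after which the continuous proofs carry over unchanged. The only item that deserves a brief justification is that the lower bound on $E_{\alpha,1}(-\lambda t^\alpha)$ used in (iii) applies uniformly for all $\lambda\ge c_0^{-1}\lambda>0$, which is immediate from the continuity and asymptotics of $E_{\alpha,1}$ on the negative real axis.
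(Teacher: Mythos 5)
Your proposal takes essentially the route the paper intends: the paper gives no separate proof of this lemma, stating only that it is identical to that of Lemma \ref{lem:op}, and your plan (uniform-in-$h$ discrete resolvent estimate, then the same contour computations for (i)--(ii) and the discrete spectral decomposition with the two-sided Mittag--Leffler bounds for (iii)) is exactly that transfer. Two small technical points deserve correction, though neither affects the validity of the argument. First, the intermediate operator bound $\|A_h(t_*)^k(z^\alpha+A_h(t_*))^{-1}\|\le c|z|^{(k-1)\alpha}$ is false for $k=2$ uniformly in $h$: iterating the identity gives $A_h^2(z^\alpha+A_h)^{-1}=A_h-z^\alpha A_h(z^\alpha+A_h)^{-1}$, and the first term has operator norm $\|A_h\|\sim h^{-2}$, not $c|z|^\alpha$. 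The standard fix is to note that this term contributes nothing to $A_h^2E_h(t;t_*)$ because $\int_{\Gamma_{\theta,\kappa}}e^{zt}\,\d z=0$, after which the remaining term is bounded by $c|z|^\alpha$ and the contour estimate yields the stated $ct^{-1-\alpha}$ bound. Second, the representation $F_h(t;t_*)v_h=\sum_j E_{\alpha,1}(-\lambda_j^ht^\alpha)(v_h,\varphi_j^h)\varphi_j^h$ does not come from residues (the integrand has no poles enclosed by the contour); it follows by projecting onto each discrete eigenvector and invoking the Laplace transform identity $\int_0^\infty e^{-zt}E_{\alpha,1}(-\lambda t^\alpha)\,\d t=z^{\alpha-1}/(z^\alpha+\lambda)$. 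With these repairs, parts (i)--(iii) go through as you describe, with all constants controlled by the uniform lower bound $\lambda_1^h\ge c_0^{-1}\lambda>0$ and hence independent of $h$.
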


Analogue to Lemma \ref{lem:op-reg}, we have the following result.
\begin{lemma}\label{lem:op-reg-semi}
Let $F_h(t;t_*)$ be the discrete solution operator defined in \eqref{eqn:op-h}. 
For all $0<t\le T$, $t_*\in(0,T]$ and $v_h\in X_h$, we have
\begin{equation*}
\|   (\gamma I+F_h(T;T))^{-1}v_h \|  \le c \gamma^{-1}\| v_h \|_{L^2\II}~~\text{and}
\quad \|F_h(T;T)(\gamma I+F_h(T;T))^{-1}v_h\|_{L^2\II}\le c\|v_h\|_{L^2\II},
\end{equation*}
where the constant $c$ is independent of $t$, $\gamma$ and $h$.
\end{lemma}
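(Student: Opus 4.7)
The plan is to mirror the proof of the continuous counterpart Lemma \ref{lem:op-reg}, exploiting the fact that $A_h(T):X_h\to X_h$ is symmetric and positive definite under conditions \eqref{Cond-1-t}--\eqref{Cond-2-t}. Hence $A_h(T)$ admits a system of eigenpairs $\{(\lambda_j^h,\varphi_j^h)\}_{j=1}^{N_h}$ with $\lambda_j^h>0$ and $\{\varphi_j^h\}$ orthonormal in $L^2(\Omega)$, where $N_h=\dim X_h$.

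First I would diagonalize $F_h(T;T)$ in this eigenbasis. Using the contour representation \eqref{eqn:op-h} together with the standard identity
\begin{equation*}
\frac{1}{2\pi\mathrm{i}}\int_{\Gamma_{\theta,\kappa}} e^{zT}z^{\alpha-1}(z^\alpha+\lambda_j^h)^{-1}\,\mathrm{d}z = E_{\alpha,1}(-\lambda_j^h T^\alpha),
\end{equation*}
I obtain $F_h(T;T)v_h = \sum_j E_{\alpha,1}(-\lambda_j^h T^\alpha)(v_h,\varphi_j^h)\varphi_j^h$. So $F_h(T;T)$ is selfadjoint with eigenvalues $\mu_j := E_{\alpha,1}(-\lambda_j^h T^\alpha)$.

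Next I would invoke the classical complete monotonicity of the Mittag--Leffler function: for $\alpha\in(0,1)$ and $x\ge 0$, $E_{\alpha,1}(-x)\in(0,1]$. Hence $\mu_j\in(0,1]$ for every $j$, so $\gamma I+F_h(T;T)$ is invertible with eigenvalues in $(\gamma,\gamma+1]$. For any $v_h=\sum_j c_j\varphi_j^h\in X_h$, Parseval's identity then yields
\begin{equation*}
\|(\gamma I+F_h(T;T))^{-1}v_h\|_{L^2(\Omega)}^2 = \sum_j \frac{c_j^2}{(\gamma+\mu_j)^2} \le \gamma^{-2}\|v_h\|_{L^2(\Omega)}^2,
\end{equation*}
giving the first bound. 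For the second, the trivial inequality $x/(\gamma+x)\le 1$ for $x\ge 0$ and $\gamma>0$ gives
\begin{equation*}
\|F_h(T;T)(\gamma I+F_h(T;T))^{-1}v_h\|_{L^2(\Omega)}^2 = \sum_j \frac{\mu_j^2}{(\gamma+\mu_j)^2}c_j^2 \le \|v_h\|_{L^2(\Omega)}^2.
\end{equation*}

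There is essentially no hard step here: the only point worth checking carefully is independence of the constants from $h$, which holds because the two scalar bounds $(\gamma+x)^{-1}\le\gamma^{-1}$ and $x/(\gamma+x)\le 1$ apply uniformly to every $\mu_j\in(0,1]$ irrespective of the mesh. Thus both inequalities follow without any residual dependence on $h$, $\gamma$, or the particular eigenvalue distribution.
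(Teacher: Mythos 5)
Your proof is correct and follows essentially the same route as the paper: the paper states this lemma without proof, attributing it (via the continuous analogue Lemma \ref{lem:op-reg} and the fully discrete version in \cite[Lemma 4.4]{ZhangZhou:2021}) to exactly the spectral-decomposition argument you reconstruct, namely diagonalizing $F_h(T;T)$ over the eigenpairs of the symmetric positive definite operator $A_h(T)$ on $X_h$ and using $E_{\alpha,1}(-x)\in(0,1]$ for $x\ge0$. All steps, including the mesh-independence of the constants, check out.
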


The following Lemma provides an error estimate for the semidiscrete error of the direct problem, see \cite[Theorem 3.2]{JinLiZhou:2019} for a detailed proof.
\begin{lemma}
\label{lem:semi-direct}
Let $u$ and $u_h$ be the solutions to \eqref{eqn:fde-t} and \eqref{eqn:fde-t-h} respectively. If $u_0\in L^2$ and $f\equiv 0$, then there holds that 
\begin{equation*}
\|(u_h-u)(t)\|_{L^2\II}\le ch^2 t^{-\al}\|u_0\|_{L^2\II}~~\text{for all}~~t\in(0,T],
\end{equation*}
where the constant $c$ is independent of $t$ and $h$.
\end{lemma}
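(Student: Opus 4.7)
\bigskip
\noindent\textbf{Proof proposal.} The plan is to follow the perturbation-plus-resolvent strategy used throughout the paper (and worked out in detail in \cite{JinLiZhou:2019}). Fix $t\in(0,T]$, freeze the diffusion coefficient at $t_*=t$, and apply the representations \eqref{eqn:Sol-expr-u} and \eqref{eqn:Sol-expr-uh} with the choices $f\equiv 0$ and $u_h(0)=P_hu_0$. Subtracting the two identities and inserting $\pm P_h(A(t)-A(s))u(s)$ in the integrand, one splits the error $e_h(t):=u_h(t)-u(t)$ into three pieces:
\begin{equation*}
e_h(t)=\bigl[F_h(t;t)P_h-F(t;t)\bigr]u_0 + \mathrm{I}(t)+\mathrm{II}(t),
\end{equation*}
where $\mathrm{I}(t)$ collects the frozen-coefficient discretization of the perturbation, namely $\int_0^t\!\bigl[E_h(t-s;t)P_h-E(t-s;t)\bigr](A(t)-A(s))u(s)\,ds$, and $\mathrm{II}(t)$ collects the remaining genuinely discrete contribution, essentially $\int_0^t E_h(t-s;t)\bigl[(A_h(t)-A_h(s))P_h u(s)-P_h(A(t)-A(s))u(s)\bigr]\,ds$ together with $\int_0^t E_h(t-s;t)(A_h(t)-A_h(s))e_h(s)\,ds$.

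For the frozen-coefficient initial-data term I would use the classical resolvent bound $\bigl\|\bigl[(z^\alpha+A_h(t))^{-1}P_h-(z^\alpha+A(t))^{-1}\bigr]v\bigr\|_{L^2\II}\le c h^2\|v\|_{L^2\II}$, which is proved via duality together with the Ritz-projection error \eqref{estimate:Rh}; plugging this into the contour integral \eqref{eqn:op-h} and deforming the contour to $\Gamma_{\theta,1/t}$ yields the desired $ch^2 t^{-\alpha}\|u_0\|_{L^2\II}$ bound. For the integral $\mathrm{I}(t)$ I would combine an analogous resolvent estimate, yielding $\|E_h(t-s;t)P_h-E(t-s;t)\|\le ch^2(t-s)^{-1}$, with the continuous perturbation bound from Lemma \ref{lem:conti-A} giving $\|(A(t)-A(s))u(s)\|_{L^2\II}\le c\,|t-s|\,\|u(s)\|_{\dH 2}$, and the regularity $\|u(s)\|_{\dH 2}\le c s^{-\alpha}\|u_0\|_{L^2\II}$ from Lemma \ref{lemma:prior-estimate}. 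The resulting integral $\int_0^t ch^2(t-s)^{-1}\cdot|t-s|\cdot s^{-\alpha}\,ds$ is of order $h^2 t^{1-\alpha}\le c h^2 t^{-\alpha}$.

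For $\mathrm{II}(t)$ I would use the smoothing property Lemma \ref{lem:op-d}(i) and the discrete perturbation Lemma \ref{lem:cond-Ah} to set up a weakly singular Volterra inequality of the form $\|e_h(t)\|_{L^2\II}\le c h^2 t^{-\alpha}\|u_0\|_{L^2\II}+c\int_0^t\|e_h(s)\|_{L^2\II}\,ds$, and then close the argument by the generalized (fractional) Gronwall inequality. The commutator-type piece $(A_h(t)-A_h(s))P_h u(s)-P_h(A(t)-A(s))u(s)$ is handled by inserting the Ritz projection $R_h(s)u(s)$, using \eqref{eqn:Rh} to absorb the difference $A_h(s)R_h(s)-A(s)$ into an $L^2$-quantity on $X_h$, and invoking \eqref{estimate:Rh} together with the inverse inequality.

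\textbf{Main obstacle.} The delicate point is that the ``commutator'' in $\mathrm{II}(t)$ must be controlled with an $h^2$ rate while retaining the full temporal singularity $t^{-\alpha}$, since any loss of a factor of $h$ would degrade the nonsmooth-data estimate. This requires that the Ritz-projection correction be estimated in $L^2\II$ (not merely $\dH 1$), which is precisely where the duality argument analogous to the one in the proof of Lemma \ref{lem:cond-AhT} is needed. Once this sharp commutator bound is in place, the rest of the proof is an orchestration of the smoothing estimates of Lemma \ref{lem:op-d} and the Gronwall argument, matching the strategy of \cite[Theorem 3.2]{JinLiZhou:2019}.
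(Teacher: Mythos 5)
The paper does not give its own proof of this lemma but cites \cite[Theorem 3.2]{JinLiZhou:2019}, and your sketch reproduces essentially the strategy of that reference: the frozen-coefficient perturbation representation \eqref{eqn:Sol-expr-u}/\eqref{eqn:Sol-expr-uh}, the $O(h^2)$ resolvent approximation estimate fed through the contour integrals defining $F_h$ and $E_h$, the Ritz-projection/commutator treatment, and a Gronwall closure. One small imprecision: the final Volterra term must be written in terms of $\zeta_h(s):=u_h(s)-P_hu(s)\in X_h$ rather than $e_h(s)$ itself, since $A_h(t)-A_h(s)$ acts only on $X_h$; with that adjustment (and the triangle inequality via $\|P_hu(s)-u(s)\|_{L^2\II}\le ch^2\|u(s)\|_{\dH 2}\le ch^2s^{-\alpha}\|u_0\|_{L^2\II}$) the outline is sound.
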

After proposing many results about solving direct problem, we shall propose a semidiscrete scheme for solving the backward problem.

We apply the regularized semidiscrete scheme: find $u_{\gamma,h}(t)\in X_h$ such that 
\begin{equation}
\label{eqn:fde-h-reg}
\Dal \tuh(t)+A_h(t)\tuh(t) = 0,~~ 0<t\le T,\qquad \gamma \tuh(t) + \tuh(T) = P_hg.
\end{equation}
Then analogue to \eqref{eqn:reg-sol-0} we have 
\begin{equation}
\label{eqn:repre-tudh}
\begin{aligned}
\tuh(0)&=(\gamma I+F_h(T;T))^{-1} \left[P_hg-\int_0^TE_h(T-s;T)(A_h(T)-A_h(s))\tuh(s)ds\right].
\end{aligned}
\end{equation}
Next we shall derive a preliminary estimate for the proof of the semidiscrete error $\tuh-\tu$.
\begin{lemma}
\label{lem:semi-preliminary}
Let $\tu(t)$ be the solution to the backward regularized problem \eqref{eqn:fde-h-reg}. Then fix any $t_*\in (0,T]$ there holds that 
\begin{equation*}
\|\int_0^{t_*} E_h(t_*-s;t_*)A_h(s)(R_h(s)-P_h)\tu(s)ds\|_{L^2\II}\le ch^2 {\max\{ t_*^{-\alpha},t_*^{1-\alpha}\}}\|\tu(0)\|_{L^2}.
\end{equation*}
The constant $c$ is independent of $t$ and $t_*$.
\end{lemma}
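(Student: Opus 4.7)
My plan is to identify the target integrand as the source term in the semidiscrete equation satisfied by $P_h\tu$, and then reduce the estimate to the known semidiscrete error for the direct problem (Lemma \ref{lem:semi-direct}).

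First, using the defining relations of $R_h(s)$, $P_h$ and $A_h(s)$ together with integration by parts, one verifies the Ritz identity $A_h(s)R_h(s)v=P_hA(s)v$ for all $v\in\dH2$. Since $\tu$ satisfies $\Dal\tu+A(s)\tu=0$, applying $P_h$ and noting that $\Dal$ commutes with the time-independent projection $P_h$ yields
\begin{equation*}
\Dal(P_h\tu)(s)+A_h(s)(P_h\tu)(s)=-A_h(s)(R_h(s)-P_h)\tu(s),\qquad(P_h\tu)(0)=P_h\tu(0).
\end{equation*}
Thus $w_h:=P_h\tu$ solves a semidiscrete subdiffusion equation whose right-hand side is precisely minus the integrand we want to bound.

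Next, I freeze the coefficient at $t_*$, apply the representation \eqref{eqn:Sol-expr-uh} to $w_h$, and subtract the analogous formula for the homogeneous semidiscrete direct solution $v_h$ with $v_h(0)=P_h\tu(0)$. This gives the clean identity
\begin{align*}
&\int_0^{t_*}E_h(t_*-s;t_*)A_h(s)(R_h(s)-P_h)\tu(s)\,ds\\
&\qquad=\bigl(v_h(t_*)-P_h\tu(t_*)\bigr)+\int_0^{t_*}E_h(t_*-s;t_*)(A_h(t_*)-A_h(s))(P_h\tu(s)-v_h(s))\,ds.
\end{align*}
The first term is controlled by splitting $\|v_h(t_*)-P_h\tu(t_*)\|_{L^2}\le\|v_h(t_*)-\tu(t_*)\|_{L^2}+\|(I-P_h)\tu(t_*)\|_{L^2}$ and applying Lemma \ref{lem:semi-direct} together with Lemma \ref{lemma:prior-estimate} and \eqref{estimate:P_h}; both pieces are bounded by $ch^2t_*^{-\alpha}\|\tu(0)\|_{L^2}$. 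For the integral, I write $A_h(t_*)-A_h(s)=A_h(t_*)(I-A_h(t_*)^{-1}A_h(s))$, invoke Lemma \ref{lem:op-d}(i) with $k=1$ to get $\|A_h(t_*)E_h(t_*-s;t_*)\|\le c(t_*-s)^{-1}$, and invoke Lemma \ref{lem:cond-Ah} together with the elementary inequality $\min(1,u)\le u^{\alpha}$ to get $\|I-A_h(t_*)^{-1}A_h(s)\|\le c(t_*-s)^{\alpha}$. Combined with the bound $\|P_h\tu(s)-v_h(s)\|_{L^2}\le ch^2s^{-\alpha}\|\tu(0)\|_{L^2}$ (proved exactly as the first term), the integrand is dominated by $ch^2(t_*-s)^{\alpha-1}s^{-\alpha}\|\tu(0)\|_{L^2}$, whose $s$-integral is the Beta constant $B(\alpha,1-\alpha)$, contributing $ch^2\|\tu(0)\|_{L^2}$. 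Adding the two estimates gives $ch^2(t_*^{-\alpha}+1)\|\tu(0)\|_{L^2}$, and since $\max(t_*^{-\alpha},t_*^{1-\alpha})\ge1$ for every $t_*>0$, this yields the claimed bound.

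The main obstacle is obtaining the clean Duhamel identity in the second paragraph: one has to couple the Ritz identity $A_hR_h=P_hA$ with the commutation of $\Dal$ and $P_h$ so that the inhomogeneity driving $P_h\tu$ is \emph{exactly} the integrand under study. Once this structural reduction is in place, the estimation itself is routine Beta-function bookkeeping built on Lemma \ref{lem:semi-direct} and the frozen-coefficient smoothing bounds.
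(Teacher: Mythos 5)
Your proposal is correct and follows essentially the same route as the paper: both identify the target integral, via the frozen-coefficient Duhamel representation \eqref{eqn:Sol-expr-uh}, as the defect between $P_h\tu$ and the homogeneous semidiscrete solution started from $P_h\tu(0)$ (your $P_h\tu-v_h$ is the paper's $-\zeta_h$), bound that defect by $ch^2 s^{-\alpha}\|\tu(0)\|_{L^2\II}$ using Lemma \ref{lem:semi-direct} and \eqref{estimate:P_h}, and absorb the remaining perturbation integral with the smoothing and perturbation bounds. The only difference is cosmetic: in the tail integral you retain the factor $(t_*-s)^{\alpha-1}$ and get a Beta constant, which is marginally sharper than the paper's crude bound $c\int_0^{t_*}s^{-\alpha}\,\d s\le ct_*^{1-\alpha}$, but both yield the stated estimate.
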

\begin{proof}
Let $\fy_h$ be the solution to the following semidiscrete problem
\begin{equation*}
\Dal \fy_h(t)+A_h(t)\fy_h(t) =0,\qquad \fy_h(0) = P_h \tu(0).
\end{equation*}
{Lemma \ref{lem:semi-direct}} implies that 
\begin{equation*} 
\|(\fy_h-\tu)(t)\|_{L^2\II}\le ch^2 t^{-\alpha} \|\tu(0)\|_{L^2\II}.
\end{equation*}
Then we consider the splitting
\begin{equation*}
(\fy_h-\tu)(t)=(\fy_h-P_h\tu)(t)+(P_h\tu-\tu)(t):=\zeta_h(t)+\rho(t).
\end{equation*}
The approximation property \eqref{estimate:P_h} and the regularity estimate in 
Theorem \ref{thm:reg-u} give that 
\begin{equation*} 
\|\rho(t)\|_{L^2\II}\le ch^2\|\tu(t)\|_{{\dH2}}\le ch^2 t^{-\alpha} \|\tu(0)\|_{L^2\II}.
\end{equation*}
Then by triangle's inequality, we obtain
\begin{equation}\label{eqn:zh-1}
\|\zeta_h(t)\|_{L^2\II}\le \| \rho(t) \|_{L^2\II} + \| (\fy_h-\tu)(t) \|_{L^2\II} \le ch^2 t^{-\alpha} \|\tu(0)\|_{L^2\II}.
\end{equation} 
Meanwhile notice that 
\begin{equation*}
\Dal \zeta_h(t)+A_h(t)\zeta_h(t)=A_h(t)(R_h(t)-P_h)\tu(t),~~T\ge t>0,\qquad \zeta(0) = 0.
\end{equation*}
Then for any $t_* \in (0,T]$, $\zeta_h(t_*)$ could be written as 
\begin{equation*}
\zeta_h(t_*) = \int_0^{t_*} E_h(t_*-s;t_*)A_h(s)(R_h(s)-P_h)\tu(s)ds+\int_0^{t_*} E_h(t_*-s;t_*)(A_h(t_*)-A_h(s))\zeta_h(s)ds
\end{equation*}
we apply Lemmas \ref{lem:cond-AhT} and \ref{lem:op-d}, and the estimate \eqref{eqn:zh-1} to derive
\begin{equation*}
\begin{aligned}
&\quad\|\int_0^t E_h(t-s;t_*)A_h(s)(R_h(s)-P_h)\tu(s)ds\|_{L^2\II} \\
&\le \|\zeta_h(t)\|_{L^2\II} 
+ c\int_0^{t_*} \|\zeta_h(s)\|_{L^2\II} \,\d s
\le c (t_*^{-\alpha}+t_*^{1-\alpha})h^2\|\tu(0)\|_{L^2\II} \\
& \le {c h^2 \max\{t_*^{-\alpha},t_*^{1-\alpha}\}\|\tu(0)\|_{L^2\II}}.
\end{aligned}
\end{equation*}
\end{proof}

Next, we state a key  lemma providing an estimate for the discretization error $\tuh-\tu$.
\begin{lemma}
\label{lem:semi-err}
Let $\tu(t)$, $\tuh(t)$ be the solutions to problem \eqref{eqn:fde-back-reg} and \eqref{eqn:fde-h-reg} respectively. 
Suppose Assumption \ref{ass} is valid. 
Then there holds  
$$ \| \tuh(0) - \tu(0)  \| \le c h^2\gamma^{-1}\|u_0\|_{L^2\II}, $$
where the constant $c$ is independent on $\gamma$, $h$ and $t$.
\end{lemma}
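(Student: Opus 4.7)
The plan is to introduce the auxiliary error $e_h(t) := \tuh(t) - P_h\tu(t) \in X_h$. By the triangle inequality,
\begin{equation*}
\|\tuh(0)-\tu(0)\|_{L^2\II} \le \|e_h(0)\|_{L^2\II} + \|(P_h-I)\tu(0)\|_{L^2\II},
\end{equation*}
and the second term is already at most $c h^2\|\tu(0)\|_{\dH 2} \le c h^2\gamma^{-1}\|u_0\|_{L^2\II}$ by \eqref{estimate:P_h} and Lemma \ref{lem:reg-back}, so it suffices to produce the same bound for $\|e_h(0)\|_{L^2\II}$. Using the identity $A_h(t)R_h(t)=P_hA(t)$ coming from \eqref{eqn:Rh}, the commutativity of $P_h$ with $\partial_t^\alpha$, and the fact that $\gamma P_h\tu(0)+P_h\tu(T) = P_h g = \gamma\tuh(0)+\tuh(T)$, a direct computation yields
\begin{equation*}
\partial_t^\alpha e_h + A_h(t)e_h = A_h(t)(R_h(t)-P_h)\tu(t), \qquad \gamma e_h(0)+e_h(T)=0.
\end{equation*}

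Mimicking the derivation of \eqref{eqn:repre-tudh} from the above equation gives the representation
\begin{equation*}
e_h(0) = -(\gamma I+F_h(T;T))^{-1}(J_1+J_2),
\end{equation*}
where $J_1 := \int_0^T E_h(T-s;T)(A_h(T)-A_h(s))e_h(s)\,ds$ is a self-referential stability term and $J_2 := \int_0^T E_h(T-s;T)A_h(s)(R_h(s)-P_h)\tu(s)\,ds$ carries the spatial discretization error. For $J_2$, the uniform bound $\|(\gamma I+F_h(T;T))^{-1}\|\le c\gamma^{-1}$ from Lemma \ref{lem:op-reg-semi} combined with Lemma \ref{lem:semi-preliminary} and $\|\tu(0)\|_{L^2\II}\le c\|u_0\|_{L^2\II}$ from Lemma \ref{lem:reg-back} immediately yields $\|(\gamma I+F_h(T;T))^{-1}J_2\|_{L^2\II} \le c h^2\gamma^{-1}\|u_0\|_{L^2\II}$.

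For the stability term $J_1$, I would mimic the proofs of Theorems \ref{thm:stab-small} and \ref{thm:stab-big}. Writing $(\gamma I+F_h(T;T))^{-1} = F_h(T;T)^{-1}\cdot F_h(T;T)(\gamma I+F_h(T;T))^{-1}$ and applying the uniform estimate of Lemma \ref{lem:op-reg-semi} together with Lemma \ref{lem:op-d}(iii), I would bound $\|(\gamma I+F_h(T;T))^{-1}J_1\|_{L^2\II}$ by a constant times $(1+T^\alpha)\|A_h(T)J_1\|_{L^2\II}$. The discrete smoothing of Lemma \ref{lem:op-d}(i) paired with the discrete perturbation estimate in Lemma \ref{lem:cond-Ah} (for $T\le T_0$) or Lemma \ref{lem:cond-AhT} (for $T\ge T_1$) then turns this into an integrable kernel integrated against $\|e_h(s)\|_{L^2\II}$. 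Feeding in a discrete analog of the a priori estimate in Lemma \ref{lemma:prior-estimate} for the forced equation satisfied by $e_h$, the smallness/largeness condition on $T$ from Assumption \ref{ass} allows to absorb the resulting $\|e_h(0)\|_{L^2\II}$ contribution into the left-hand side.

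The main obstacle will be establishing the discrete a priori bound $\|e_h(s)\|_{L^2\II} \le c\min(1,s^{-\alpha})\|e_h(0)\|_{L^2\II} + c h^2\|u_0\|_{L^2\II}$ uniformly in $h$, $\gamma$ and $s\in(0,T]$, since the forcing $A_h(t)(R_h(t)-P_h)\tu(t)$ inherits the short-time singularity of $\tu$. I would handle this by splitting $e_h$ into the free evolution from $e_h(0)$, controlled by the discrete analogue of Lemma \ref{lemma:prior-estimate}, and the forced part, for which Lemma \ref{lem:semi-preliminary} delivers exactly the required $h^2$-control. Once both ingredients are in place, the estimate closes by absorption exactly as in the stability proofs.
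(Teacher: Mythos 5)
Your proposal is correct and follows essentially the same route as the paper: the same splitting $\tuh - \tu = (\tuh - P_h\tu) + (P_h\tu - \tu)$, the same error equation driven by $A_h(t)(R_h(t)-P_h)\tu(t)$ with quasi-boundary condition $\gamma e_h(0)+e_h(T)=0$, the same use of Lemmas \ref{lem:op-reg-semi} and \ref{lem:semi-preliminary} for the discretization term, and the same homogeneous/forced splitting of $e_h$ to obtain the a priori bound that lets the stability argument of Theorems \ref{thm:stab-small} and \ref{thm:stab-big} absorb the remaining term. No substantive differences to report.
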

\begin{proof}
We use the splitting 
\begin{equation*}
(\tuh-\tu)(0) = (\tuh-P_h\tu)(0)+(P_h\tu-\tu)(0):=\zeta_h(0)+\rho(0).
\end{equation*}
From the approximation property \eqref{estimate:P_h} and Lemma  \ref{lem:reg-back}, we obtain
\begin{equation*}
\|\rho(0)\|_{L^2\II}\le ch^2\|\tu(0)\|_{\dH 2}\le ch^2 \gamma^{-1}\|u_0\|_{L^2\II}.
\end{equation*} 

Now we turn to the bound of $\zeta_h(t)$. 
Using the fact  $ A_h(t)R_h(t)v = P_hA(t)v$, 
we observe that
\begin{equation}
\label{eqn:zeta}
\Dal \zeta_h(t)+ A_h(t)\zeta_h(t) = A_h(t)(R_h(t)-P_h)\tu(t)~~ \text{for}~~t\in(0,T],
\quad\text{with}~~\gamma \zeta_h(0)+\zeta_h(T) = 0.
\end{equation}
For any  $t_*\in (0,T]$, we have the solution representation from \eqref{eqn:Sol-expr-uh} that 
\begin{align*}
\zeta_h(t) &= F_h(t;t_*)\zeta_h(0)+\int_0^t E_h(t-s;t_*)A_h(s)(R_h(s)-P_h)\tu(s)\, \d s \\
&\qquad +\int_0^t E_h(t-s;t_*)(A_h(t_*)-A_h(s))\zeta_h(s)\,\d s.
\end{align*}
Then with $t = t_* = T$ we apply $\gamma \zeta_h(0)+\zeta_h(T) = 0$ to derive
\begin{equation*}
\begin{aligned}
\zeta_h(0) &= (\gamma I +F_h(T;T))^{-1}\int_0^TE_h(T-s;T)(A_h(s)(P_h-R_h(s))\tu(s)\,\d s \\
&\qquad -(\gamma I+F_h(T;T))^{-1}\int_0^T E_h(T-s;T)(A_h(T)-A_h(s))\zeta(s)\,\d s. 
\end{aligned}
\end{equation*}
Now we apply Lemmas \ref{lem:op-reg-semi} and   \ref{lem:semi-preliminary} to obtain
\begin{align*}
\|\zeta_h(0)\|_{L^2\II}&\le c\gamma^{-1}\int_0^T \|E_h(T-s;T)A_h(s)(P_h-R_h(s))\tu(s)\|_{L^2\II}\d s\\
&+\|F_h(T;T)^{-1}\int_0^TE_h(T-s;T)(A_h(T)-A_h(s))\zeta(s) \d s\|_{L^2\II}\\
&\le {c_T}h^2\gamma^{-1} \|\tu(0)\|_{L^2\II} +c(1+T^\al)\int_0^T \|A_h(T;T)E_h(T-s;T)(A_h(T)-A_h(s))\zeta_h(s)\|_{L^2\II} \d s
\end{align*}
Next, we
 split $\zeta_h(s)$ into homogeneous part and inhomogeneous part. Let $\zeta_h(t) := \zeta_1(t)+\zeta_2(t)$ where 
\begin{align*}
\Dal \zeta_1(t)+A_h(t)\zeta_1(t) &= 0~~ \text{for}~~t\in(0,T],
\quad\text{with}~~\zeta_1(0) = \zeta_h(0),\\
\Dal \zeta_2(t)+A_h(t)\zeta_2(t) &= A_h(t)(R_h(t)-P_h)\tu(t)~~ \text{for}~~t\in(0,T],
\quad\text{with}~~\zeta_2(0) = 0,
\end{align*}
First of all, we fixed $t_*\in (0,T]$ and apply the solution representation in \eqref{eqn:Sol-expr-uh} 
and Lemmas \ref{lem:op-d}, \ref{lem:semi-preliminary} and \ref{lem:cond-Ah}, and hence derive
\begin{align*}
\|\zeta_2(t_*)\|_{L^2\II}&\le \int_0^{t_*} \|E_h(t_*-s;t_*)A_h(s)(R_h(s)-P_h)\tu(s)\|_{L^2\II}\d s\\
&+ \int_0^{t_*} \|E_h(t_*-s;t_*)(A_h(t_*)-A_h(s))\zeta_2(s)\|_{L^2\II}\d s\\
&\le ct_*^{-\al}h^2\|\tu(0)\|_{L^2\II} + \int_0^{t_*} \|\zeta_2(s)\|_{L^2\II}\d s.
\end{align*}
Then  Gronwall's inequality leads to
\begin{equation}\label{estimate:zeta_2}
\|\zeta_2(t)\|_{L^2\II}\le ch^2e^{ct}t^{-\al}\|\tu(0)\|_{L^2\II}.
\end{equation}
For $\zeta_1(t)$, we apply the similar argument in Lemma \ref{lemma:prior-estimate} to obtain
\begin{align*}
 &\| \zeta_1(t) \|_{L^2\II} \le c \min(1,t^{-\alpha}) \|  {\zeta_h(0)} \|_{L^2\II}~~\text{and}~~ \|A_h(T) \zeta_1(t)\|_{L^2\II} \le c \,e^{ct} t^{-\alpha} \| {\zeta_h(0)} \|_{L^2\II}
  \quad \text{for all}~~ t>0.
\end{align*}
All the positive constants $c$ in above estimates are independent of $t$ and $T$.
As a result, we have 
\begin{align*}
\|\zeta_h(0)\|_{L^2\II}&\le ch^2\gamma^{-1} \|\tu(0)\|_{L^2\II} +\sum_{i=1}^2c(1+T^\al)\int_0^T \|A_h(T;T)E_h(T-s;T)(A_h(T)-A_h(s))\zeta_i(s) \d s\|_{L^2\II}\\
&\le ch^2\gamma^{-1}\|\tu(0)\|_{L^2\II}+c(1+T^\al)\int_0^T \|A_h(T)E_h(T-s;T)(A_h(T)-A_h(s))\zeta_1(s)\|_{L^2\II} \d s.
\end{align*}
Applying the argument in theorems \ref{thm:stab-small} and \ref{thm:stab-big}, we 
conclude that $ \|\zeta_h(0)\|_{L^2\II} \le c h^2 \gamma^{-1} \|\tu(0)\|_{L^2\II} $. 
This completes the proof of the lemma.
\end{proof}
\subsection{Fully discrete scheme and error analysis}
To begin with, we introduce the fully discrete scheme for the direct problem.
We divide the time interval $[0,T]$ into a uniform grid, with $ t_n=n\tau$, $n=0,\ldots,N$, and $\tau=T/N$ being
the time step size. Then we approximate the fractional derivative by using 
the backward Euler convolution quadrature (with $\varphi^j=\varphi(t_j)$) \cite{Lubich:1986, JinLiZhou:2017sisc}:
\begin{equation*}
\bar\partial_\tau^\alpha \varphi^n = \sum_{j=0}^n \omega_{n-j}^{(\alpha)} (\varphi^{j} - \varphi^0),
\quad\mbox{ with }  ~ \omega_j^{(\alpha)}=  (-1)^j\frac{\Gamma(\alpha+1)}{\Gamma(\alpha-j+1)\Gamma(j+1)}.
\end{equation*}

The fully discrete scheme for problem \eqref{eqn:fde-t-h-Ah} reads: find ${U_h^n}\in X_h$ such that
\begin{equation}\label{eqn:fully}
\bDal U_h^n +A_h(t_n)U_h^n= P_h f(t_n),\quad n=1,2,\ldots,N,\quad \text{with}~~U_h^0 = P_hu_0,
\end{equation}  
By means of Laplace transform 
and perturbation argument, with $1\le n_*\le N$, 
the fully discrete solution $U^n$ can be written as \cite{JinLiZhou:2019, ZhangZhou:2020}
\begin{equation}
\label{eqn:repre-U}
U_h^n = F_{h, \tau}^n(n_*) U_h^0+ \tau \sum_{k=1}^n E_{h, \tau}^{n-k}(n_*)P_hf(t_k)+\tau\sum_{k=1}^nE_{h, \tau}^{n-k}(n_*)(A_h(t_{n_*})-A_h(t_k)) U_h^k
\end{equation}
with $n=1,2,\cdots,N$. Here the fully discrete operators $F_{h, \tau}^n(n_*)$ and $E_{h, \tau}^n(n_*)$ are defined by 
\begin{equation}\label{eqn:FE_ht-0}
\begin{aligned}
F_{h, \tau}^n(n_*) &= \frac{1}{2\pi\mathrm{i}}\int_{\Gamma_{\theta,\sigma}^\tau } e^{zt_n} {e^{-z\tau}} \delta_\tau(e^{-z\tau})^{\alpha-1}({ \delta_\tau(e^{-z\tau})^\alpha}+A_h(t_{n_*}))^{-1}\,\d z,\\
E_{h, \tau}^n(n_*) &= \frac{1}{2\pi\mathrm{i}}\int_{\Gamma_{\theta,\sigma}^\tau } e^{zt_n} ({ \delta_\tau(e^{-z\tau})^\alpha}+A_h(t_{n_*}))^{-1}\,\d z ,
\end{aligned}
\end{equation}
with $\delta_\tau(\xi)=(1-\xi)/\tau$ and the contour
$\Gamma_{\theta,\sigma}^\tau :=\{ z\in \Gamma_{\theta,\sigma}:|\Im(z)|\le {\pi}/{\tau} \}$ where $\theta\in(\pi/2,\pi)$ is close to $\pi/2$.
(oriented with an increasing imaginary part).
The next lemma gives elementary properties of the kernel $\delta_\tau(e^{-z\tau})$. The detailed proof has been given in
\cite[Lemma B.1]{JinLiZhou:2017sisc}.
\begin{lemma}\label{lem:delta}
For a fixed $\theta'\in(\pi/2,\pi/\alpha)$, there exists $\theta\in(\pi/2,\pi)$ \
positive constants $c,c_1,c_2$ $($independent of $\tau$) such that for all $z\in \Gamma_{\theta,\sigma}^\tau$
\begin{equation*}
\begin{aligned}
& c_1|z|\leq
|\delta_\tau(e^{-z\tau})|\leq c_2|z|,\qquad
\delta_\tau(e^{-z\tau})\in \Sigma_{\theta'}. \\
& |\delta_\tau(e^{-z\tau})-z|\le c\tau |z|^{2},\qquad
 |\delta_\tau(e^{-z\tau})^\alpha-z^\alpha|\leq c\tau |z|^{1+\alpha}.
 \end{aligned}
\end{equation*}
\end{lemma}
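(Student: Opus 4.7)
The plan is to reduce everything to properties of the entire function $g(w):=(1-e^{-w})/w$ on a bounded region of the complex plane. The crucial observation is that on the truncated contour $\Gamma_{\theta,\sigma}^\tau$, the rescaling $w:=z\tau$ confines $w$ to a $\tau$-independent bounded set $\Omega_\theta$: on the rays $z=\rho e^{\pm i\theta}$ with $|\mathrm{Im}(z)|\le\pi/\tau$, one has $|w|=\rho\tau\le\pi/|\sin\theta|$, while on the circular arc $|w|=\sigma\tau$ is small. Since $\delta_\tau(e^{-z\tau})=z\,g(w)$, the first pair of inequalities $c_1|z|\le|\delta_\tau(e^{-z\tau})|\le c_2|z|$ is equivalent to a two-sided bound $c_1\le|g(w)|\le c_2$ on $\Omega_\theta$. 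As $g$ extends continuously to $w=0$ with $g(0)=1$ and its zeros $w=2\pi ik$ ($k\ne 0$) all lie outside $\Omega_\theta$ (which is contained in the strip $|\mathrm{Im}(w)|\le\pi$), the bound follows by continuity on the compact set $\overline{\Omega_\theta}\cup\{0\}$.

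For the sector inclusion $\delta_\tau(e^{-z\tau})\in\Sigma_{\theta'}$, I would split $\arg\delta_\tau=\arg z+\arg g(w)$. On the circular part, $|w|=\sigma\tau$ is small so $g(w)\approx 1$ and $\arg g(w)$ is small; on the rays, $w$ lies on a segment of argument $\pm\theta$ with modulus between $\sigma\tau$ and $\pi/|\sin\theta|$, on which $\arg g$ is continuous and bounded uniformly by some $\eta(\theta)$. Since $|\arg z|\le\theta$ and $\eta(\theta)\to 0$ as $\theta\to\pi/2$ (by continuity of $\arg g$ at $0$), choosing $\theta$ sufficiently close to $\pi/2$ forces $|\arg\delta_\tau|\le\theta+\eta(\theta)\le\theta'$. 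This uniform control of $\arg g$, requiring a careful tuning of $\theta$ against $\theta'$, is the main technical point.

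The approximation estimate follows from Taylor's theorem: writing $1-e^{-w}-w=-w^2\int_0^1(1-s)e^{-sw}\,\d s$, one obtains
\[
|\delta_\tau(e^{-z\tau})-z|=\tau^{-1}|1-e^{-w}-w|\le c\tau^{-1}|w|^2\sup_{s\in[0,1]}|e^{-sw}|\le c\tau|z|^2,
\]
where the supremum is finite because $w\in\Omega_\theta$ is bounded. For the fractional version, I would apply the identity
\[
a^\alpha-b^\alpha=\alpha(a-b)\int_0^1\bigl((1-s)b+sa\bigr)^{\alpha-1}\,\d s
\]
with $a=\delta_\tau(e^{-z\tau})$ and $b=z$. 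The convex combination $(1-s)z+s\delta_\tau(e^{-z\tau})=z\bigl((1-s)+s\,g(w)\bigr)$ stays in a sector of angle strictly less than $\pi$ with modulus $\asymp|z|$ by the two-sided bound on $g$ and the previously established sector property, so the integrand is bounded by $c|z|^{\alpha-1}$. Combining with $|a-b|\le c\tau|z|^2$ yields the claimed $c\tau|z|^{\alpha+1}$, completing the proof.
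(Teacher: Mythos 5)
The paper offers no proof of this lemma; it simply cites \cite[Lemma B.1]{JinLiZhou:2017sisc}, so your argument has to stand on its own. Your reduction to $g(w)=(1-e^{-w})/w$ with $w=z\tau$ confined to a $\tau$-independent compact set, the compactness argument for $c_1\le |g(w)|\le c_2$ (the zeros $2\pi\mathrm{i}k$, $k\ne 0$, lie outside the strip $|\Im w|\le \pi$), and the Taylor-remainder bound for $|\delta_\tau(e^{-z\tau})-z|$ are all correct. The gap is in the sector inclusion. Your key claim that $\eta(\theta):=\sup_w|\arg g(w)|\to 0$ as $\theta\to\pi/2^+$ ``by continuity of $\arg g$ at $0$'' is false: the admissible $w$-segment does not shrink to the origin as $\theta\to\pi/2$; it converges to the segment $[0,\mathrm{i}\pi]$, since its far endpoint is $w_0=\pi\cot\theta+\mathrm{i}\pi$, of modulus $\pi/\sin\theta\to\pi$. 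At that endpoint $1-e^{-w_0}=1+e^{-\pi\cot\theta}$ is real and positive, so $\arg g(w_0)=-\arg w_0=-\theta$, and hence $\eta(\theta)\ge\theta>\pi/2$ for every admissible $\theta$. Your triangle-inequality bound $|\arg\delta_\tau(e^{-z\tau})|\le|\arg z|+\eta(\theta)$ therefore only yields a bound near $\pi$, which does not place $\delta_\tau(e^{-z\tau})$ in $\Sigma_{\theta'}$ for $\theta'$ close to $\pi/2$ --- and the lemma must hold for every fixed $\theta'\in(\pi/2,\pi/\alpha)$.

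The point you are missing is that $\arg z$ and $\arg g(w)$ cancel exactly where $|\arg g(w)|$ is large, so the argument of the product must be estimated directly: $\arg\delta_\tau(e^{-z\tau})=\arg(1-e^{-w})$. On the upper ray write $w=x+\mathrm{i}y$ with $x=y\cot\theta\le 0$ and $y\in(0,\pi]$; then $\Im(1-e^{-w})=e^{-x}\sin y\ge 0$, so $\arg(1-e^{-w})\in[0,\pi]$, and it exceeds $\pi/2$ only where $e^{-x}\cos y>1$, which forces $y\le c|\cot\theta|$; there one checks $\tan\big(\arg(1-e^{-w})-\tfrac{\pi}{2}\big)\le c|\cot\theta|$, so $\arg(1-e^{-w})\le \tfrac{\pi}{2}+c|\cot\theta|\le\theta'$ once $\theta$ is close enough to $\pi/2$ (the lower ray follows by conjugation, and the small circular arc by $1-e^{-w}=w(1+O(|w|))$, as you do). You should also note that your proof of the fractional-power estimate leans on the sector inclusion to keep the segment joining $z$ and $\delta_\tau(e^{-z\tau})$ away from the origin and the branch cut, so the gap propagates to the last assertion; once the inclusion is repaired along the lines above, that part of your argument goes through.
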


The next lemma provides some approximation properties of solution operators 
$F_{h,\tau}^n(n_*)$ and $E_{h,\tau}^n(n_*)$.
See \cite[Lemma 4.2]{ZZZ:2022} for the proof of the first estimate,
and  \cite[Lemma 4.5]{JinLiZhou:2019} for the second estimate.
\begin{lemma}
\label{lem:op-err:fully}
For the operator $F_h^\tau$ and $E_h^\tau$ defined in \eqref{eqn:FE_ht-0}, we have 
\begin{align*}
&\| A_h(t_{n_*})^\beta (F_{h,\tau}^n(n_*)-  F_{h}(t_n; t_{n_*}))\|_{L^2\II}\le c\tau t_n^{-1-\beta\al},\\
&\Big\|\tau A_h^\beta E_{h,\tau}^{{n_*}-k}(n_*)-\int_{t_{k-1}}^{t_k} A_h^\beta(t_{n_*}) E_h(t_{n_*}-s;t_{n_*}))ds\Big\|\le c\tau^2(t_{n_*}-t_k+\tau)^{-(2-(1-\beta)\al)}
\end{align*}
for any $\beta\in[0,1]$.
\end{lemma}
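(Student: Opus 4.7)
\textbf{Proof plan for Lemma \ref{lem:op-err:fully}.}
The plan for both estimates is the same in spirit: write the discrete and continuous operators as Laplace-type contour integrals over $\Gamma_{\theta,\sigma}^\tau$, subtract the integrands, expand the difference using the resolvent identity
\[
(\delta_\tau(e^{-z\tau})^\alpha + A_h(t_{n_*}))^{-1} - (z^\alpha + A_h(t_{n_*}))^{-1}
 = (z^\alpha - \delta_\tau(e^{-z\tau})^\alpha)(\delta_\tau(e^{-z\tau})^\alpha + A_h(t_{n_*}))^{-1}(z^\alpha + A_h(t_{n_*}))^{-1},
\]
bound each piece pointwise in $z$ using Lemma \ref{lem:delta} together with the resolvent estimate \eqref{eqn:resol} (which gives $\|A_h^\beta(\mu^\alpha + A_h)^{-1}\|\le c|\mu|^{\alpha(\beta-1)}$ for $\mu\in\Sigma_{\theta'}$ and $\beta\in[0,1]$), and finally integrate along a contour with $\kappa\sim 1/(t_n)$ or $\kappa\sim 1/(t_{n_*}-t_k+\tau)$. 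The contributions from the part of the unbounded contour outside $\Gamma_{\theta,\sigma}^\tau$ are handled separately, and are exponentially small in $t_n/\tau$ because $\mathrm{Re}(z)\le-c|z|$ and $|z|\ge c/\tau$ on that tail.

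For estimate~(i), I would decompose $F_{h,\tau}^n(n_*)-F_h(t_n;t_{n_*})$, after aligning contours, into three pieces corresponding to the three factors that differ:
\[
e^{-z\tau}(\delta_\tau^{\alpha-1}-z^{\alpha-1})(\delta_\tau^\alpha + A_h)^{-1},\quad z^{\alpha-1}(e^{-z\tau}-1)(\delta_\tau^\alpha + A_h)^{-1},\quad z^{\alpha-1}(\delta_\tau^\alpha + A_h)^{-1}-(z^\alpha+A_h)^{-1}z^{\alpha-1}.
\]
Using $|\delta_\tau^{\alpha-1}-z^{\alpha-1}|\le c\tau|z|^\alpha$, $|e^{-z\tau}-1|\le c\tau|z|$, and $|\delta_\tau^\alpha-z^\alpha|\le c\tau|z|^{1+\alpha}$ from Lemma \ref{lem:delta}, each term produces an operator-norm bound of order $c\tau|z|^{\alpha\beta}$ after multiplication by $A_h^\beta$. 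Integrating $e^{\mathrm{Re}(z)t_n}|z|^{\alpha\beta}$ along $\Gamma_{\theta,\sigma}^\tau$ with $\kappa\sim 1/t_n$ yields $c\tau t_n^{-1-\alpha\beta}$, as desired.

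For estimate~(ii), I would use the identity
\[
\int_{t_{k-1}}^{t_k} E_h(t_{n_*}-s;t_{n_*})\,\d s = \frac{1}{2\pi\mathrm{i}}\int_{\Gamma_{\theta,\kappa}}e^{zt_{n_*-k}}\,\frac{e^{z\tau}-1}{z}\,(z^\alpha+A_h(t_{n_*}))^{-1}\,\d z,
\]
so that, up to tail contributions, the difference with $\tau E_{h,\tau}^{n_*-k}(n_*)$ reduces to an integral of
\[
\tau(z^\alpha-\delta_\tau^\alpha)(\delta_\tau^\alpha + A_h)^{-1}(z^\alpha+A_h)^{-1}
\;+\;\Bigl(\tau-\tfrac{e^{z\tau}-1}{z}\Bigr)(z^\alpha+A_h)^{-1}.
\]
Here I invoke $|\tau-(e^{z\tau}-1)/z|\le c\tau^2|z|$ from Taylor expansion in the region $|z\tau|\lesssim 1$. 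After multiplying by $A_h(t_{n_*})^\beta$ and estimating as above, each piece is of order $\tau^2|z|^{1-(1-\beta)\alpha}$ (the first piece being of strictly smaller order). Choosing $\kappa\sim 1/(t_{n_*}-t_k+\tau)$ and integrating delivers the required bound $c\tau^2(t_{n_*}-t_k+\tau)^{-(2-(1-\beta)\alpha)}$.

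The main technical obstacle is bookkeeping the exponents so that the three-term splitting in (i) and the two-term splitting in (ii) all reduce to the \emph{same} integrable kernel in $|z|$, which is where Lemma \ref{lem:delta} and the sector-uniform resolvent estimate are used in concert. A secondary, but routine, concern is the truncated-contour tail: the extra portion of $\Gamma_{\theta,\sigma}$ with $|\mathrm{Im}\,z|>\pi/\tau$ must be shown to contribute $O(e^{-ct_n/\tau})$, which is absorbed into the stated algebraic bounds because $\tau t_n^{-1-\alpha\beta}$ and $\tau^2(t_{n_*}-t_k+\tau)^{-(2-(1-\beta)\alpha)}$ dominate exponentially small quantities once $n\ge 1$.
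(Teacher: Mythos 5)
The paper does not prove this lemma itself but defers to \cite[Lemma 4.2]{ZZZ:2022} and \cite[Lemma 4.5]{JinLiZhou:2019}, and your outline reproduces exactly the contour-integral/resolvent-identity argument used there, with the exponent bookkeeping coming out correctly (each split term in (i) reduces to $c\tau|z|^{\alpha\beta}$ and each term in (ii) to $c\tau^2|z|^{1-(1-\beta)\alpha}$ before integrating with $\kappa\sim t_n^{-1}$ resp.\ $\kappa\sim(t_{n_*}-t_k+\tau)^{-1}$), so this is essentially the same approach and is correct. The only slip is the parenthetical claim that the first piece in (ii) is of strictly smaller order --- both pieces are of the same order $\tau^2|z|^{1-(1-\beta)\alpha}$ --- but this does not affect the stated bound.
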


Note that the solution operators $F_{h,\tau}^n(n_*)$ and $E_{h,\tau}^n(n_*)$ satisfy the following smoothing properties, whose proof is identical to that of Lemma \ref{lem:op}.
See also a similar result in \cite[Lemma 4.3]{ZZZ:2022}.
\begin{lemma}
\label{lem:op:fully}
Let $F_h^\tau(n;n)$ and $E_h^\tau(n;n)$ be the operators defined in \eqref{eqn:FE_ht-0}. 
Then they satisfy the following properties for any $n\ge 1$ and $v_h\in X_h$,
\begin{itemize}
\item[$\rm(i)$] $\|
A_h(t_*) F_{h,\tau}^n (n_*)v_h\|_{L^2\II} + t_{n+1}^{1-(2-k)\alpha}  \| A_h(t_*)^k  E_{h,\tau}^n (n_*)v_h  \|_{L^2\II} \le c  t_{n+1}^{-\alpha} \|v_h\|_{L^2\II},~~ k = 1,2$;
\item[$\rm(ii)$] $\|F_{h,\tau}^n (n_*)v_h\|_{L^2\II}+ t_n^{1-\alpha}\|E_{h,\tau}^n (n_*)v_h\|_{L^2\II} \le c \min(1,  t_n^{-\alpha}) \|v_h\|_{L^2\II}$;
\item[$\rm(iii)$] $\| F_{h,\tau}^n (n_*)^{-1} v_h \|_{L^2\II} \le c (1+ t_n^\alpha) \| A_h(t_*) v _h\|_{L^2\II}$ 
\end{itemize}
 \end{lemma}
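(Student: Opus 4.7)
\textbf{Proof plan for Lemma \ref{lem:op:fully}.} The plan is to mimic the continuous proof of Lemma \ref{lem:op} using the contour representations in \eqref{eqn:FE_ht-0}, with the crucial new ingredient being Lemma \ref{lem:delta}, which tells us that on $\Gamma_{\theta,\sigma}^\tau$ the symbol $\delta_\tau(e^{-z\tau})$ lies in a proper sector $\Sigma_{\theta'}$ and is comparable to $z$ (both in magnitude and, up to an $O(\tau|z|^{1+\alpha})$ error, in value after taking the $\alpha$-th power). Thus the map $z\mapsto \delta_\tau(e^{-z\tau})^\alpha$ sits inside the sector where the resolvent estimate \eqref{eqn:resol}, applied to the self-adjoint positive operator $A_h(t_{n_*})$ on $X_h$, is in force. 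Since the operator $A_h(t_{n_*})$ inherits the sectoriality of $A(t_{n_*})$ on $X_h$ with constants independent of $h$ (by \eqref{Cond-1-t} and $X_h\subset H_0^1(\Omega)$), the estimates in (i)--(iii) of Lemma \ref{lem:op} transfer verbatim up to replacing $z^\alpha$ by $\delta_\tau(e^{-z\tau})^\alpha$.

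For parts (i) and (ii), I would first record the resolvent bound
\begin{equation*}
\|A_h(t_*)^k(\delta_\tau(e^{-z\tau})^\alpha + A_h(t_{n_*}))^{-1}\|\le c\,|\delta_\tau(e^{-z\tau})|^{(k-1)\alpha}\le c|z|^{(k-1)\alpha},\quad k=0,1,2,
\end{equation*}
which follows from \eqref{eqn:resol} and Lemma \ref{lem:delta}. Then I would parametrize the truncated contour $\Gamma_{\theta,\sigma}^\tau$ with $\sigma \sim t_{n+1}^{-1}$ (for (i)) or $\sigma\sim t_n^{-1}$ (for (ii)), split into the arc piece $|z|=\sigma$ and the two rays $z=\rho e^{\pm\mathrm{i}\theta}$, $\rho\in[\sigma,\pi/\tau]$, and bound $|e^{zt_n}|$ by $e^{-c\rho t_n}$ on the rays. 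A direct computation, identical in form to the continuous proof in \cite[Theorems 6.4 and 3.2]{Jin:2021book}, yields
\begin{equation*}
\|A_h(t_*)^k E_{h,\tau}^n(n_*)v_h\|_{L^2(\Omega)}\le c\int_{\Gamma_{\theta,\sigma}^\tau}|e^{zt_n}|\,|z|^{(k-1)\alpha}\,|\d z|\|v_h\|_{L^2(\Omega)}\le c\,t_n^{(1-k)\alpha-1}\|v_h\|_{L^2(\Omega)},
\end{equation*}
and analogously for $F_{h,\tau}^n(n_*)$, giving (i) and the unweighted half of (ii); the $\min(1,t_n^{-\alpha})$ factor in (ii) follows by taking $\sigma$ of order one when $t_n\le 1$.

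Part (iii) is the genuine obstacle, because it is an inverse rather than a smoothing estimate and does not follow from the contour bound alone. The approach I would take is via the discrete spectral decomposition of $A_h(t_{n_*})$: writing $A_h(t_{n_*})=\sum_j\lambda_j^h\langle\cdot,\varphi_j^h\rangle\varphi_j^h$, the operator $F_{h,\tau}^n(n_*)$ acts as multiplication by
\begin{equation*}
f_{j,n}=\frac{1}{2\pi\mathrm{i}}\int_{\Gamma_{\theta,\sigma}^\tau}e^{zt_n}e^{-z\tau}\delta_\tau(e^{-z\tau})^{\alpha-1}(\delta_\tau(e^{-z\tau})^\alpha+\lambda_j^h)^{-1}\d z,
\end{equation*}
and the claim reduces to the two-sided estimate $c_1/(1+\lambda_j^h t_n^\alpha)\le f_{j,n}\le c_2/(1+\lambda_j^h t_n^\alpha)$ uniformly in $j$, $n$, $\tau$, $h$. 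Using Lemma \ref{lem:delta} to compare $f_{j,n}$ with the continuous Mittag--Leffler value $E_{\alpha,1}(-\lambda_j^h t_n^\alpha)$ (the error being $O(\tau t_n^{-1}/(1+\lambda_j^h t_n^\alpha))$ by a standard CQ consistency argument, cf.\ \cite[Lemma 4.2]{ZZZ:2022} above), and then invoking the asymptotics of $E_{\alpha,1}$ from \cite[Theorem 4.1]{SakamotoYamamoto:2011}, one obtains the desired lower bound $|f_{j,n}|\ge c(1+\lambda_j^h t_n^\alpha)^{-1}$ for $\tau$ sufficiently small. Re-summing then gives $\|F_{h,\tau}^n(n_*)^{-1}v_h\|_{L^2(\Omega)}\le c(1+t_n^\alpha)\|A_h(t_*)v_h\|_{L^2(\Omega)}$. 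The hardest step is precisely this uniform lower bound $f_{j,n}\gtrsim (1+\lambda_j^h t_n^\alpha)^{-1}$, which is exactly the point where the CQ discretization error must be controlled uniformly in the discrete eigenvalue $\lambda_j^h$, mirroring the role played by Mittag--Leffler asymptotics in the continuous Lemma \ref{lem:op}(iii).
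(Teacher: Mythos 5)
Your overall strategy is the one the paper intends: the paper gives no proof of this lemma, stating only that the argument is identical to that of Lemma \ref{lem:op} (via the contour representation \eqref{eqn:FE_ht-0}, Lemma \ref{lem:delta} and the resolvent estimate \eqref{eqn:resol}) and pointing to \cite[Lemma 4.3]{ZZZ:2022}; for the inverse estimate (iii) the spectral reduction you describe is exactly the mechanism used in \cite{ZhangZhou:2020,ZhangZhou:2021}. Part (ii) and the $k=1$ case of (i) go through as you outline.

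There are, however, two concrete gaps. First, the resolvent bound you record for $k=2$ is false: for the self-adjoint positive definite operator $A_h(t_{n_*})$ on $X_h$ one has $\|A_h(t_{n_*})^2(w+A_h(t_{n_*}))^{-1}\|=\max_j\, (\lambda_j^h)^2/|w+\lambda_j^h|$, and since $\lambda^2/|w+\lambda|$ grows linearly in $\lambda$ this quantity is of order $\lambda_{\max}^h\sim h^{-2}$, not $c|w|^\alpha$. The $k=2$ estimate in (i) must instead be obtained from the identity $A_h^2(w+A_h)^{-1}=A_h-wA_h(w+A_h)^{-1}$: the contribution of the first term to the contour integral is removed by (near-)cancellation of $\int_{\Gamma_{\theta,\sigma}^\tau}e^{zt_n}\d z$ (close the truncated contour and estimate the added segments), and the second term is bounded using $\|A_h(w+A_h)^{-1}\|\le c$ together with $|w|\le c|z|^\alpha$, which yields $\|A_h(t_{n_*})^2E_{h,\tau}^n(n_*)\|\le c\,t_{n+1}^{-1-\alpha}$ as required. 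Second, in (iii) the comparison of $f_{j,n}$ with $E_{\alpha,1}(-\lambda_j^h t_n^\alpha)$ with relative error $O(\tau t_n^{-1})=O(1/n)$ only produces the lower bound $f_{j,n}\ge c(1+\lambda_j^h t_n^\alpha)^{-1}$ once $n$ exceeds a fixed threshold; for the finitely many small $n$ a direct argument is needed (e.g. $f_{j,1}=(1+\lambda_j^h\tau^\alpha)^{-1}$ exactly, and positivity preservation of the backward Euler CQ weights for the scalar problem, cf. \cite{LiWang:2019}), which is how the uniform bound is actually established in \cite{ZhangZhou:2020,ZhangZhou:2021}. Both gaps are standard and fixable, but as written the $k=2$ step of (i) does not hold.
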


Next we introduce some \textsl{a priori} estimate for the discrete solution $U^n$ in \eqref{eqn:repre-U}, analogue to Lemma \ref{lemma:prior-estimate} for the continuous problem.
We provide the proof in Appendix B for completeness.
\begin{lemma}\label{lem:ful:a-priori}
Let $U^n$ be the solution to \eqref{eqn:fully}, then we have the following a-priori estimate ($f\equiv 0$)
\begin{equation*}
\|U_h^n\|_{L^2\II}\le c\min(1,t_n^{-\al})\|U_h^0\|_{L^2\II}
~~\text{and}~~\|A_h(T)U_h^n\|_{L^2\II}\le ce^{ct_n}t_n^{-\al}\|U_h^0\|_{L^2\II}~~\text{for} ~~n\ge1.
\end{equation*}
Moreover, for any $\epsilon\in(0,1/\alpha-1)$, there holds  
\begin{equation*}
\|A_h(T)U_h^n\|_{L^2\II} \le c t_n ^{-(1-\epsilon)\alpha}\|U_h^0\|_{L^2\II}~~\text{for all} ~~n\ge1.
\end{equation*}
All the constants in above estimates are independent of $h$, $n$, $N$, $\tau$ and $T$.
\end{lemma}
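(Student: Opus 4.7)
My plan is to mimic the continuous argument in the proof of Lemma \ref{lemma:prior-estimate} at the fully discrete level, using the discrete solution representation \eqref{eqn:repre-U}, the smoothing estimates in Lemma \ref{lem:op:fully}, and the discrete perturbation estimates in Lemmas \ref{lem:cond-Ah} and \ref{lem:cond-AhT}.

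For the first bound, introduce the time-independent reference operator $\underline{A}_h$ on $X_h$ via $(\underline{A}_h v_h, \chi_h) = c_0(\nabla v_h, \nabla \chi_h)$. By condition \eqref{Cond-1-t}, $A_h(t) - \underline{A}_h$ is symmetric positive semidefinite on $X_h$ uniformly in $t$. Rewriting \eqref{eqn:fully} (with $f\equiv 0$) as $\bar\partial_\tau^\alpha U_h^n + \underline{A}_h U_h^n = -(A_h(t_n) - \underline{A}_h)U_h^n$, taking the $L^2(\Omega)$ inner product with $U_h^n$, and combining the discrete fractional positivity inequality $(\bar\partial_\tau^\alpha V^n, V^n) \ge \|V^n\|_{L^2\II}\bar\partial_\tau^\alpha \|V^n\|_{L^2\II}$ (standard for backward Euler convolution quadrature) with the Poincar\'e inequality yields
\begin{equation*}
\bar\partial_\tau^\alpha \|U_h^n\|_{L^2\II} + c \|U_h^n\|_{L^2\II} \le 0.
\end{equation*}
A discrete comparison principle against the scalar problem $\bar\partial_\tau^\alpha v^n + c v^n = 0$ with $v^0 = \|U_h^0\|_{L^2\II}$, whose discrete Mittag--Leffler-type solution is bounded by $c/(1+ct_n^\alpha)$ uniformly in $\tau$, yields the first estimate.

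For the second bound, apply $A_h(t_n)$ to the representation \eqref{eqn:repre-U} with $n_* = n$, and rewrite $A_h(t_n) - A_h(t_k) = A_h(t_n)(I - A_h(t_n)^{-1} A_h(t_k))$ to obtain
\begin{equation*}
A_h(t_n) U_h^n = A_h(t_n) F_{h,\tau}^n(n) U_h^0 + \tau\sum_{k=1}^n A_h(t_n)^2 E_{h,\tau}^{n-k}(n)(I - A_h(t_n)^{-1} A_h(t_k)) U_h^k.
\end{equation*}
Lemma \ref{lem:op:fully}(i) bounds the first term by $ct_n^{-\alpha}\|U_h^0\|_{L^2\II}$, and combining Lemma \ref{lem:op:fully}(i) with $k=2$ and Lemma \ref{lem:cond-Ah} bounds each summand by $ct_{n-k+1}^{-1-\alpha}\min(1,t_{n-k})\|U_h^k\|_{L^2\II}$, an effectively integrable kernel. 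Plugging in the first-step bound $\|U_h^k\|_{L^2\II} \le ct_k^{-\alpha}\|U_h^0\|_{L^2\II}$ and applying a discrete weakly-singular Gronwall inequality produces the $ce^{ct_n}t_n^{-\alpha}$ bound.

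For the third bound, I would sharpen the estimate by interpolating the smoothing of $E_{h,\tau}^{n-k}(n)$ with the perturbation $I - A_h(t_n)^{-1}A_h(t_k)$, mirroring the refinement at the end of the proof of Lemma \ref{lemma:prior-estimate}; this produces an effective discrete kernel of the form $(t_n-t_k+\tau)^{-1+\epsilon\alpha}$. Combined with the first-step decay $\|U_h^k\|_{L^2\II}\le ct_k^{-\alpha}\|U_h^0\|_{L^2\II}$, the discrete Beta-type sum $\tau\sum_{k=1}^n (t_n-t_k+\tau)^{-1+\epsilon\alpha} t_k^{-\alpha}$ is bounded by $ct_n^{-(1-\epsilon)\alpha}$ uniformly in $n$, $\tau$ and $T$. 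The main technical obstacle is Step 1: producing a genuinely $\tau$-uniform discrete Mittag--Leffler comparison that gives the sub-algebraic decay $\min(1, t_n^{-\alpha})$, rather than a $\tau$-dependent bound or an exponentially growing one. Steps 2 and 3 are careful bookkeeping of the discrete weakly singular Gronwall-type sums that already appear in the continuous proof.
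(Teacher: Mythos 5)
Your proposal follows the paper's Appendix B proof essentially step for step: the same energy argument with the reference operator $\underline{A}_h=-c_0\Delta_h$, the discrete positivity inequality and comparison principle against the discrete Mittag--Leffler function for the first bound, and the perturbed solution representation \eqref{eqn:repre-U} with the smoothing estimates of Lemma \ref{lem:op:fully} and the perturbation bounds of Lemmas \ref{lem:cond-Ah} for the other two. The only cosmetic difference is in the second bound, where the paper keeps $\|A_h(t_{n_*})U_h^k\|_{L^2\II}$ on the right-hand side and closes with a discrete Gronwall inequality, while you use the $A_h^2E_{h,\tau}$ smoothing together with the already-established $L^2$ decay of $U_h^k$ (so the Gronwall step you mention is actually superfluous); both routes yield the stated $e^{ct_n}t_n^{-\alpha}$ bound.
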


Now we introduce the fully discrete scheme for solving the backward problem: find 
$U_{h,\gamma}^{n,\delta} \in X_h$ for $n=1,\ldots,N$ such that 
\begin{equation}\label{eqn:back-fully}
\bDal U_{h,\gamma}^{n,\delta} +A_h(t_n)U_{h,\gamma}^{n,\delta} = P_hf(t_n) ~~\text{for all} ~~1\le n\le N,
\quad \text{with}~~ 
\gamma U_{h,\gamma}^{0,\delta} + U_{h,\gamma}^{N,\delta} = P_h g_\delta.
\end{equation}
Then $U_{h,\gamma}^{n,\delta}$ can be written as 
\begin{equation}\label{eqn:repre-Udga}
\begin{aligned}
U_{h,\gamma}^{0,\delta} &= (\gamma I+F_{h,\tau}^N(N))^{-1}\Big[P_hg_\delta-\tau \sum_{k=1}^N F_{h,\tau}^{N-k}(N)(A_h(T)-A_h(t_k))U_{h,\gamma}^{k,\delta}\Big].
\end{aligned}
\end{equation} 
The following lemma provides a useful estimate of the discrete  operator $(\gamma I+F_{h,\tau}^N(N))^{-1}$; see a detailed proof in \cite[Lemma 4.4]{ZhangZhou:2021}.
\begin{lemma}\label{lem:op-reg-fully}
Let $F_h^\tau(n;n_*)$ and $E_h^\tau(n;n_*)$ be the operators defined in \eqref{eqn:FE_ht-0}.
Then there holds 
\begin{equation*}
\|(\gamma I+F_h^\tau(N;N))^{-1}v_h\|_{L^2\II}\le c\gamma^{-1}\|v_h\|_{L^2\II}~~ \text{and}~~
\|F_h^\tau(N;N)(\gamma I+F_h^\tau(N;N))^{-1} v_h\|_{L^ 2\II}\le c
\end{equation*}
where $c$ is uniform in $T$, $h$, $\tau$ and $\gamma$.
\end{lemma}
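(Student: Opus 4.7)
The plan is to reduce both bounds to a scalar eigenvalue estimate via a spectral decomposition of $A_h(T)$ on $X_h$, and then to prove that estimate by a discrete maximum principle. Under the coercivity and symmetry conditions \eqref{Cond-1-t}--\eqref{Cond-2-t}, $A_h(T)$ is self-adjoint and positive definite on $X_h$ in the $L^2(\Omega)$ inner product, so it admits an $L^2$-orthonormal eigenbasis $\{\varphi_j^h\}$ of $X_h$ with eigenvalues $\lambda_j^h>0$. Since the integrand in the contour definition of $F_{h,\tau}^N(N)$ in \eqref{eqn:FE_ht-0} depends on $A_h(T)$ only through its resolvent, $F_{h,\tau}^N(N)$ shares this eigenbasis and acts as multiplication by the scalars
\[
\mu_j \;=\; \frac{1}{2\pi i}\int_{\Gamma_{\theta,\sigma}^\tau} e^{zT}\,\frac{e^{-z\tau}\,\delta_\tau(e^{-z\tau})^{\alpha-1}}{\delta_\tau(e^{-z\tau})^\alpha+\lambda_j^h}\,dz.
\]
It therefore suffices to prove the uniform scalar bound $\mu_j\in[0,1]$: the spectrum of $\gamma I+F_{h,\tau}^N(N)$ then lies in $[\gamma,1+\gamma]$ and the spectrum of $F_{h,\tau}^N(N)(\gamma I+F_{h,\tau}^N(N))^{-1}$ lies in $[0,1]$, giving both claimed bounds with constants independent of all parameters.

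To identify $\mu_j$, I would reuse the Laplace-transform derivation behind the representation \eqref{eqn:repre-U}, specialised to the trivial situation where the ``operator'' is the scalar $\lambda_j^h$. In that case the perturbation sum vanishes and $\mu_j=w^N$, where $\{w^n\}_{n\ge 0}$ solves the scalar BE-CQ recursion
\[
\bar\partial_\tau^\alpha w^n + \lambda_j^h\, w^n = 0,\qquad n\ge 1,\quad w^0=1.
\]

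The heart of the argument is a discrete maximum principle for this recursion. Multiplying through by $\tau^\alpha$ and isolating the diagonal term $k=n$ gives
\[
(1+\tau^\alpha \lambda_j^h)\,w^n \;=\; W_n \;-\; \sum_{k=0}^{n-1}\omega_{n-k}^{(\alpha)}\,w^k,\qquad W_n:=\sum_{k=0}^n\omega_k^{(\alpha)}.
\]
For $\alpha\in(0,1)$ one checks that $\omega_0^{(\alpha)}=1$, that $\omega_k^{(\alpha)}=(-1)^k\binom{\alpha}{k}\le 0$ for $k\ge 1$, and that the partial sums $W_n$ are strictly positive and monotonically decrease to $(1-1)^\alpha=0$; hence $W_n\in(0,1]$. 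Assuming inductively that $w^k\in[0,1]$ for $k<n$, the right-hand side is bounded below by $W_n\ge 0$ and above by $W_n+\sum_{j=1}^n|\omega_j^{(\alpha)}|=\omega_0^{(\alpha)}=1$, so dividing by $1+\tau^\alpha\lambda_j^h\ge 1$ forces $w^n\in[0,1]$. This closes the induction and delivers $\mu_j\in[0,1]$ uniformly in $j$, $\tau$, $h$, $N$, hence in $T=N\tau$ and $\gamma$.

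The main subtlety I anticipate is the combinatorial verification that the BE-CQ weights have the stated sign pattern and that the partial sums $W_n$ remain in $[0,1]$; these facts are classical but must be handled carefully if the constants are to be genuinely independent of $T$, $\tau$, $h$, and $\gamma$ as advertised. A more analytic alternative would be to deform the contour $\Gamma_{\theta,\sigma}^\tau$ and apply Lemma \ref{lem:delta} to bound $|\delta_\tau(e^{-z\tau})^\alpha+\lambda_j^h|$ from below in a sector containing the spectrum, yielding only $|\mu_j|\le c$; I would prefer the discrete-ODE route because it produces the sharp bound $\mu_j\in[0,1]$ together with the cleanest constants.
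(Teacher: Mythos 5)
Your proposal is correct, and it follows the same basic route as the proof the paper points to: since $a(\cdot,T)$ is symmetric, $A_h(T)$ is self-adjoint and positive definite on $X_h$, so $F_{h,\tau}^N(N)$ diagonalizes in its eigenbasis and both bounds reduce to showing that each eigenvalue $\mu_j$ of $F_{h,\tau}^N(N)$ lies in $[0,1]$ (the cited reference \cite{ZhangZhou:2021} does exactly this by spectral decomposition). Your identification $\mu_j=w^N$ with $w^n$ the CQ-BE solution of the scalar problem $\bar\partial_\tau^\alpha w^n+\lambda_j^h w^n=0$, $w^0=1$, is the specialization of \eqref{eqn:repre-U} to a frozen coefficient, and your induction is sound: $\omega_0^{(\alpha)}=1$, $\omega_k^{(\alpha)}<0$ for $k\ge1$, and $W_n=\sum_{k=0}^n\omega_k^{(\alpha)}$ decreases from $1$ to $0$, so $(1+\tau^\alpha\lambda_j^h)w^n=W_n-\sum_{k<n}\omega_{n-k}^{(\alpha)}w^k\in(0,1]$ under the inductive hypothesis, giving $\mu_j\in(0,1]$ and hence both estimates with $c=1$, uniformly in $T$, $h$, $\tau$, $\gamma$. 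What your elementary maximum-principle induction buys is a self-contained replacement for the positivity/comparison results for discrete fractional ODEs that the paper invokes elsewhere for the same scalar propagator (Appendix~B via \cite{LiWang:2019}, where $0<F_\tau^n(c)\le c/(1+ct_n^\alpha)$ is quoted); the contour-deformation alternative you mention would indeed only give $|\mu_j|\le c$, which is not enough for the first bound, so the sign information is essential and your route delivers it cleanly. One minor point: the paper's displayed definition of $\bar\partial_\tau^\alpha$ omits the factor $\tau^{-\alpha}$ in the weights (compare with $\delta_\tau(\xi)=(1-\xi)/\tau$ in \eqref{eqn:FE_ht-0}); your step of multiplying through by $\tau^\alpha$ silently uses the standard convention, which is the intended one, and the argument is insensitive to this since it only uses $1+\tau^\alpha\lambda_j^h\ge1$.
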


To show the error between $U_{h,\gamma}^{N,\delta}$ and $u_0$, we introduce an
auxiliary function  $\bar U_{h,\gamma}^n  \in X_h $ such that
\begin{equation}\label{eqn:fde:ful}
\bDal \bar U_{h,\gamma}^n +A_h(t_n)\bar U_{h,\gamma}^n = P_hf(t_n) 
~~\text{for all} ~~1\le n\le N,
\quad \text{with}~~ \bar U_{h,\gamma}^n = \tuh(0),
\end{equation}
Then we have the following error estimate for the direct problem, according to \cite[Theorem 4.1]{JinLiZhou:2019}.
\begin{lemma} \label{lem:err:ful:forward}
Let $\tuh(t)$ and $\bar U_{h,\gamma}^n$ be the solution to \eqref{eqn:fde-t-h-Ah} and \eqref{eqn:fde:ful} with $f\equiv 0$, then we have 
\begin{equation*}
\| A_h(0) (\bar U_{h,\gamma}^n-\tuh(t_n))\|_{L^2\II} \le  c \tau \log (n+1) \max(t_n^{-\al-1}, t_n^{-\al}  ) \|\tuh(0)\|_{L^2\II}.
\end{equation*}
\end{lemma}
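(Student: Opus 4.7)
The plan is to adapt the frozen-coefficient perturbation argument of \cite[Theorem 4.1]{JinLiZhou:2019}, since both $\tuh(t_n)$ and $\bar U_{h,\gamma}^n$ are forward solutions from the common data $\tuh(0)$ with $f\equiv 0$. Freezing the coefficient at $t_\ast=0$ in the representations \eqref{eqn:Sol-expr-uh} and \eqref{eqn:repre-U}, and writing $e^n:=\bar U_{h,\gamma}^n-\tuh(t_n)$, I would decompose the error as
\begin{equation*}
 e^n = \bigl(F_{h,\tau}^n(0)-F_h(t_n;0)\bigr)\tuh(0) + Q^n + \tau\sum_{k=1}^n E_{h,\tau}^{n-k}(0)(A_h(0)-A_h(t_k))\,e^k,
\end{equation*}
where $Q^n$ gathers the convolution-quadrature defect between the discrete sum $\tau\sum_{k=1}^n E_{h,\tau}^{n-k}(0)(A_h(0)-A_h(t_k))\tuh(t_k)$ and the continuous integral $\int_0^{t_n}E_h(t_n-s;0)(A_h(0)-A_h(s))\tuh(s)\d s$.

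First, Lemma \ref{lem:op-err:fully} (with $\beta=1$) yields $\|A_h(0)(F_{h,\tau}^n(0)-F_h(t_n;0))\tuh(0)\|_{L^2\II}\le c\tau t_n^{-1-\al}\|\tuh(0)\|_{L^2\II}$, producing the $t_n^{-\al-1}$ part of the target bound. Next I would bound $\|A_h(0)Q^n\|_{L^2\II}$ by splitting $Q^n$ into a quadrature-weight defect, controlled via the second estimate in Lemma \ref{lem:op-err:fully}, and an integrand defect $\int_{t_{k-1}}^{t_k}E_h(t_n-s;0)\bigl[(A_h(0)-A_h(t_k))\tuh(t_k)-(A_h(0)-A_h(s))\tuh(s)\bigr]\d s$. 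The latter I would estimate by combining Lemma \ref{lem:cond-Ah} (which gives $\|(A_h(0)-A_h(s))v\|_{L^2\II}\le cs\|v\|_{L^2\II}$ for small $s$) with the semidiscrete analogues of Lemma \ref{lemma:prior-estimate} for $\tuh$, namely $\|A_h(T)\tuh(s)\|_{L^2\II}\le ce^{cs}s^{-\al}\|\tuh(0)\|_{L^2\II}$ and $\|\partial_s\tuh(s)\|_{L^2\II}\le cs^{-1}\|\tuh(0)\|_{L^2\II}$. Summing $k$ from $1$ to $n$ after an $n/2$-split then yields a factor $\tau\log(n+1)\max(t_n^{-\al-1},t_n^{-\al})$.

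Finally, the self-feedback $\tau\sum_{k=1}^n E_{h,\tau}^{n-k}(0)(A_h(0)-A_h(t_k))e^k$ I would absorb by a weakly-singular discrete Gronwall inequality: since $\|A_h(0)E_{h,\tau}^{n-k}(0)\|\lesssim t_{n-k+1}^{-1}$ by Lemma \ref{lem:op:fully}(i) and $\|A_h(0)-A_h(t_k)\|\le c\min(1,t_k)$ by Lemma \ref{lem:cond-Ah}, the resulting recursion has an integrable kernel and the previous bound propagates to $\|A_h(0)e^n\|_{L^2\II}$ without loss.

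The hard part will be extracting exactly a single $\log(n+1)$, rather than a higher log power or a spurious $t_n^{-\varepsilon}$, when balancing the CQ singularity $(t_n-t_k+\tau)^{-2+\al}$ from Lemma \ref{lem:op-err:fully} against the smoothness factor $\min(1,s)\cdot s^{-\al}$ inside each subinterval. This demands a careful split of $\sum_{k=1}^n$ around $k\sim n/2$, tight control of the near-$k=n$ contribution where the kernel is tightened by the extra $A_h(0)$ prefactor, and an equally tight control of the near-$k=0$ contribution where the smoothness factor $s$ kills the boundary singularity; once this balance is done, the Gronwall closure is routine.
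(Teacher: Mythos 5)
Your skeleton (operator-approximation term for $F$, quadrature defect for $E$, feedback term in $e^k$) matches the paper's, but two of your choices break the argument. First, the freezing point: you freeze at $t_*=0$, whereas the paper freezes at the evaluation time $t_*=t_{n_*}$. This is not cosmetic. The whole estimate hinges on the cancellation $\|A_h(t_{n_*})^2E_{h,\tau}^{n_*-k}(n_*)\|\cdot\|I-A_h(t_k)A_h(t_{n_*})^{-1}\|\le c\,(t_{n_*}-t_k+\tau)^{-1-\alpha}\cdot\min(1,t_{n_*}-t_k)\le c\,(t_{n_*}-t_k+\tau)^{-\alpha}$: the perturbation factor vanishes exactly where the kernel is singular. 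With $t_*=0$ the perturbation factor is $\min(1,t_k)$, which is $O(1)$ near $k=n$, so nothing tames the upper-end singularity; e.g.\ in your quadrature-weight defect the bound from Lemma \ref{lem:op-err:fully} with $\beta=1$ is $c\tau^2(t_n-t_k+\tau)^{-2}$, and $\tau^2\sum_{k}(t_n-t_k+\tau)^{-2}\cdot O(1)=O(1)$, not $O(\tau\log(n+1)t_n^{-\alpha})$. The "extra $A_h(0)$ prefactor" you invoke near $k=n$ makes the kernel more singular, not less.

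Second, the closure. The feedback kernel is not integrable in the sense your Gronwall step needs: acting on $\|A_h e^k\|$ it behaves like $t_{n-k+1}^{-1}\min(1,\cdot)$, whose discrete sum is $\sim\log n$ (so Gronwall costs a power of $n$), and even with the correct freezing point a Gronwall iteration on $a_n=\|A_h(t_n)e_n\|$ with inhomogeneity $b_n\sim\tau\log(n+1)t_n^{-1-\alpha}$ loses a factor $\tau^{-\alpha}$, since $\tau\sum_k b_k\sim\tau^{1-\alpha}\log(n+1)\gg b_n$ for $t_n\sim T$. The paper avoids Gronwall entirely: it first imports the already known $L^2$ error bound $\|e_n\|_{\L2Om}\le c\tau t_n^{-1}\log(n+1)\|\tuh(0)\|_{\L2Om}$ from \cite[Theorem 4.1]{JinLiZhou:2019} and substitutes it \emph{directly} into the feedback term $I_3$ (and into the piece $I_{2,2}$ converting $\bar U_{h,\gamma}^k$ to $\tuh(t_k)$), pairing it with the second-order smoothing $\|A_h^2E_{h,\tau}^{n_*-k}\|$ and the $L^2$-level perturbation bound so that $\tau\sum_k(t_{n_*}-t_k)^{-\alpha}\|e_k\|_{\L2Om}\le c\tau\log(n_*+1)t_{n_*}^{-\alpha}\|\tuh(0)\|_{\L2Om}$. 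This a priori $L^2$ bound is the missing ingredient in your plan; without it (or an equivalent substitute) the self-referential term cannot be closed at the claimed rate. Your remaining ingredients (Lemma \ref{lem:op-err:fully} for $I_1$, the splitting of the integrand defect using $\|\partial_s\tuh(s)\|_{\L2Om}\le cs^{-1}\|\tuh(0)\|_{\L2Om}$ and $\|A_h\tuh(s)\|_{\L2Om}\le cs^{-\alpha}\|\tuh(0)\|_{\L2Om}$) do match the paper's treatment of $I_{2,3}$ and $Q_k$.
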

\begin{proof}
Let $e_n = \bar U_{h,\gamma}^n-\tuh(t_n)$. First of all, we recall \cite[Theorem 4.1]{JinLiZhou:2019} that
\begin{equation}\label{eqn:en41}
\| e_n \|_{L^2\II} \le  c\tau t_n^{-1}\log(n+1) \|\tuh(0)\|_{L^2\II}.
\end{equation}
We then use the solution representation \eqref{eqn:repre-U} to obtain 
\begin{equation*}
\bar U_{h,\gamma}^n = F_{h,\tau}^{n}(n_*) \tuh(0) 
+ \tau \sum_{k=1}^n E_{h,\tau}^{n-k}(n_*)(A_h(t_{n_*})-A_h(t_k)) \bar U_{h,\gamma}^k.
\end{equation*}
Then by means of \eqref{eqn:Sol-expr-uh}, we have for fixed $t_{n_*}$
\begin{align*}
&\quad A_h(t_{n_*})e_{n_*} = A_h(t_{n_*})(F_{h,\tau}^n(n_*)-F_h(t_n;t_{n_*}))\tuh(0) \\
&+ \tau\sum_{k=1}^{n_*} A_h(t_{n_*})E_{h,\tau}^{n_*-k}(n_*)(A_h(t_{n_*})-A_h(t_k)) \bar U_{h,\gamma}^k - \int_0^{t_{n_*}}A_h(t_{n_*}) 
E_h(t_{n_*}-s;t_{n_*})(A_h(t_{n_*})-A_h(s))\tuh(s) \, \d s\\
&+\tau\sum_{k=1}^{n_*}A_h(t_{n_*})E_{h,\tau}^{n_*-k}(n_*)(A_h(t_{n_*})-A_h(t_k))(\bar U_{h,\gamma}^k-\tuh(t_k))
 = I_1+I_2+I_3.
\end{align*}
 Lemma \ref{lem:op-err:fully} immediately implies the bound for $I_1$: 
$$\|I_1\|_{L^2\II}\le c\tau t_{n_*}^{-1-\al} \| \tuh(0)\|_{L^2\II}.$$
A slightly modification of \cite[Lemma 4.4]{JinLiZhou:2019} leads to a bound for $I_2$. In particular, we observe
\begin{align*}
I_2 &= \sum_{k=1}^{n_*} \Big[\tau A_h(t_{n_*})E_{h,\tau}^{n_*-k}(n_*)(A_h(t_{n_*})-A_h(t_k))- \int_{t_{k-1}}^{t_{k}}A_h(t_{n_*}) 
E_h(t_{n_*}-s;t_{n_*})(A_h(t_{n_*})-A_h(s))\d s\Big]\bar U_{h,\gamma}^k\\
& + \sum_{k=1}^{n_*} \int_{t_{k-1}}^{t_k}A_h(t_{n_*}) 
E_h(t_{n_*}-s;t_{n_*})(A_h(t_{n_*})-A_h(s)) \, \d s\,  e_k\\
& + \sum_{k=1}^{n_*} \int_{t_{k-1}}^{t_k}A_h(t_{n_*}) 
E_h(t_{n_*}-s;t_{n_*})(A_h(t_{n_*})-A_h(s))(\tuh(t_k)-\tuh(s)) \, \d s:=  I_{2,1}+I_{2,2}+I_{2,3},
\end{align*}
For $I_{2,1}$, by means of Lemma \ref{lem:op-err:fully} with $\beta =1$, \ref{lem:cond-Ah} and \ref{lem:op:fully} (i) with the solution representation \eqref{eqn:repre-U}, we arrive at 
\begin{align*}
\|I_{2,1}\|_{L^2\II}&\le \sum_{k=1}^{n_*} \|\Big[\tau A_h(t_{n_*})E_{h,\tau}^{n_*-k}(n_*)(I-A_h(t_k)A_h(t_{n_*})^{-1})\\
&\qquad - \int_{t_{k-1}}^{t_{k}}A_h(t_{n_*}) 
E_h(t_{n_*}-s;t_{n_*})(I-A_h(s)A_h(t_{n_*})^{-1})\d s\Big]\|\,\|A_h(t_{n_*})\bar U_{h,\gamma}^k\|_{L^2\II}\\
&\le c\sum_{k=1}^{n_*} \tau^2(t_{n_*}-t_k+\tau)^{-1} t_k^{-\al} \|\tuh(0)\|_{L^2\II} \\
&\le c \tau \log(n_*+1)t_n^{-\alpha} \|\tuh(0)\|_{L^2\II}.
\end{align*}
For $I_{2,2}$ we apply Lemmas \ref{lem:op-d}  (i) with $k=2$, Lemma \ref{lem:cond-Ah} and \textsl{a priori} estimate \eqref{eqn:en41} to derive
\begin{align*}
\|I_{2,2}\|_{L^2\II}&\le  c\tau t_{n_*}^{-\al-1}\log(n_*+1)\|\tuh(0)\|_{L^2\II}.
\end{align*}
Last, for the erm $I_{2,3}$, we denote 
\begin{equation*}
Q_k = \int_{t_{k-1}}^{t_k}A_h(t_{n_*}) 
E_h(t_{n_*}-s;t_{n_*})(A_h(t_{n_*})-A_h(s))(\tuh(t_k)-\tuh(s)).
\end{equation*} 
For $k=1$, we apply Lemmas \ref{lem:op-d} and \ref{lem:cond-Ah} to derive the bound
\begin{align*}
\|Q_1\|_{L^2\II}&\le \|\int_0^\tau A_h(t_{n_*}) 
E_h(t_{n_*}-s;t_{n_*})(A_h(t_{n_*})-A_h(s))\tuh(\tau)\,\d s\|_{L^2\II}\\
&\qquad +\|\int_0^\tau A_h(t_{n_*}) 
E_h(t_{n_*}-s;t_{n_*})(A_h(t_{n_*})-A_h(s))\tuh(s)\,\d s\|_{L^2\II}\\
& \le c\int_0^\tau (t_{n_*}-s)^{-\al}\d s\|\tuh(0)\|_{L^2\II} \le c\tau t_{n_*}^{-\al} \|\tuh(0)\|_{L^2\II}.
\end{align*}
Meanwhile, for $k\ge 2$, there holds that 
\begin{align*}
Q_k =  \int_{t_{k-1}}^{t_k}  A_h(t_{n_*}) 
E_h(t_{n_*}-s;t_{n_*})(A_h(t_{n_*})-A_h(s))\int_{s}^{t_k} \tuh'(\xi)\d \xi\,\d s.
\end{align*}
The discrete analogue to Theorem \ref{thm:reg-u}(i) (see detail proof in \cite[Theorem 2.3(i)]{JinLiZhou:2019}), $\tuh'(t)$ can be bounded by 
\begin{equation}
\label{eqn:est:tuh'}
\|\tuh'(t)\|_{L^2\II}\le ct^{-1}\|\tuh(0)\|_{L^2\II}.
\end{equation}
 Then by Lemmas \ref{lem:op-d}, \ref{lem:cond-Ah} and regularity estimate \eqref{eqn:est:tuh'} there holds 
\begin{align*}
\|Q_k\|_{L^2\II}&\le c \int_{t_{k-1}}^{t_k} \|A_h(t_{n_*}) 
E_h(t_{n_*}-s;t_{n_*})(A_h(t_{n_*})-A_h(s))\|\,\int_{s}^{t_k} \xi^{-1}\d \xi\,\d s\|\tuh(0)\|_{L^2\II}\\
&\le c\int_{t_{k-1}}^{t_k} (t_{n_*}-s)^{-\al}\int_s^{t_k} \xi^{-1}\d \xi\,\d s \|\tuh(0)\|_{L^2\II}\\
&\le c \tau \int_{t_{k-1}}^{t_k} (t_{n_*}-s)^{-\al}s^{-1}\d s \|\tuh(0)\|_{L^2\II}.
\end{align*}
Summing those terms from $k=2$ to $k=n_*$, we obtain
\begin{align*}
\sum_{k=2}^{n_*} \|Q_k\|_{L^2\II}&\le c\tau \|\tuh(0)\|_{L^2\II} \int_\tau^{t_{n_*}}(t_{n_*}-s)^{-\al}s^{-1}\d s\\
&\le  c\tau t_{n_*}^{-\al-1}\log(n_*+1)\|\tuh(0)\|_{L^2\II}.
\end{align*}
As a result, we arrive at
\begin{equation*}
\|I_2\|_{L^2\II} \le c\tau t_{n_*}^{-\alpha-1}  \log (n_*+1) \|\tuh(0)\|_{L^2\II}.
\end{equation*}
Finally, Lemmas \ref{lem:op:fully}, \ref{lem:cond-Ah} and the estimate \eqref{eqn:en41} imply that 
\begin{align*}
\|I_3\|_{L^2\II}&\le c \tau \sum_{k=1}^{n_*} \|A_h(t_{n_*})^2 
E_{h,\tau}^{{n_*}-k}(n_*)\|\,\|I-A_h(t_k)A_h(t_{n_*})^{-1}\|\, \| e_k\|_{L^2\II}\\
&\le c\tau \sum_{k=1}^{n_*}(t_{n_*} - t_k)^{-\alpha}\| e_k\|_{L^2\II} \\
&\le c \tau^2 \sum_{k=1}^{n_*}(t_{n_*} - t_k)^{-\alpha}t_k^{-1}\log(k+1) \|\tuh(0)\|_{L^2\II}\\
&\le c\tau \log(n_*+1) t_n^{-\alpha}\|\tuh(0)\|_{L^2\II}.
\end{align*}
This completes the proof of the lemma.
\end{proof}
Next, we introduce an auxiliary function
\begin{equation}\label{eqn:full-back-g}
\bDal U_{h,\gamma}^n +A_h(t_n)U_{h,\gamma}^n = 0
~~\text{for all} ~~1\le n\le N,
\quad \text{with}~~ 
\gamma U_{h,\gamma}^0+U_{h,\gamma}^N = P_hg.
\end{equation}
Then $U_{h,\gamma}^{0}$ can be written as 
\begin{equation}\label{eqn:repre-g}
\begin{aligned}
U_{h,\gamma}^{0} &= (\gamma I+F_{h,\tau}^N(N))^{-1}\Big[P_hg-\tau \sum_{k=1}^N F_{h,\tau}^{N-k}(N)(A_h(T)-A_h(t_k))U_{h,\gamma}^{k}\Big].
\end{aligned}
\end{equation}

Then the next lemma provides an estimate for $U_{h,\gamma}^{0,\delta}-U_{h,\gamma}^{0}$.
\begin{lemma}
\label{lem:err-ful-noise}
Let $U_{h,\gamma}^{n,\delta}$ and $U_{h,\gamma}^{n}$ be the solution to problems 
\eqref{eqn:back-fully} and \eqref{eqn:full-back-g} respectively.
Suppose Assumption \ref{ass} is valid. 
Then there holds 
\begin{equation*}
\|U_{h,\gamma}^{0,\delta}-U_{h,\gamma}^{0}\|_{L^2\II}\le c\delta\gamma^{-1},
\end{equation*}
where the constant $c$ is independent on $\gamma$, $h$, $\tau$ and $t$.
\end{lemma}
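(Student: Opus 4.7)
The plan is to mimic, in the fully discrete setting, the stability argument of Theorems \ref{thm:stab-small} and \ref{thm:stab-big} (whose semidiscrete counterpart already appears in Lemma \ref{lem:semi-err}). First, I set $\theta^n := U_{h,\gamma}^{n,\delta} - U_{h,\gamma}^n$. Subtracting \eqref{eqn:full-back-g} from \eqref{eqn:back-fully} shows that
\begin{equation*}
\bDal \theta^n + A_h(t_n)\theta^n = 0, \quad n=1,\ldots,N, \qquad \gamma \theta^0 + \theta^N = P_h(g_\delta - g),
\end{equation*}
and the discrete Duhamel representations \eqref{eqn:repre-Udga} and \eqref{eqn:repre-g} combine to yield
\begin{equation*}
\theta^0 = (\gamma I + F_{h,\tau}^N(N))^{-1}\Big[P_h(g_\delta - g) - \tau \sum_{k=1}^N E_{h,\tau}^{N-k}(N)\big(A_h(T) - A_h(t_k)\big)\theta^k\Big].
\end{equation*}

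Second, I split the $L^2\II$-norm of $\theta^0$ into a noise contribution and a perturbation contribution. The noise part is handled directly by Lemma \ref{lem:op-reg-fully} together with the $L^2$-stability of $P_h$, giving $\|(\gamma I+F_{h,\tau}^N(N))^{-1}P_h(g_\delta - g)\|_{L^2\II} \le c\gamma^{-1}\delta$. For the perturbation sum, I factor
\begin{equation*}
(\gamma I+F_{h,\tau}^N(N))^{-1} = F_{h,\tau}^N(N)^{-1}\cdot F_{h,\tau}^N(N)(\gamma I+F_{h,\tau}^N(N))^{-1},
\end{equation*}
use the uniform bound of the second factor from Lemma \ref{lem:op-reg-fully}, and turn $F_{h,\tau}^N(N)^{-1}$ into the operator $c(1+T^\alpha)A_h(T)$ by Lemma \ref{lem:op:fully}(iii).

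Third, the remaining job is to estimate $\tau \sum_{k=1}^N \|A_h(T) E_{h,\tau}^{N-k}(N)(A_h(T) - A_h(t_k))\theta^k\|_{L^2\II}$. Under Assumption \ref{ass}(i) I would combine Lemma \ref{lem:op:fully}(i) (for $\|A_h(T) E_{h,\tau}^{N-k}(N)\|$) with Lemma \ref{lem:cond-Ah}, rewriting $A_h(T) - A_h(t_k) = (I - A_h(t_k)A_h(T)^{-1})A_h(T)$ to obtain $\|(A_h(T)-A_h(t_k))\theta^k\|_{L^2\II} \le c\min(1,T-t_k)\|\theta^k\|_{\dH 2}$. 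The product yields a uniformly bounded kernel, so the full sum is controlled by $c\tau \sum_{k=1}^N \|\theta^k\|_{\dH 2}$. The discrete a priori estimate from Lemma \ref{lem:ful:a-priori} gives $\|\theta^k\|_{\dH 2} \le c e^{ct_k}t_k^{-\alpha}\|\theta^0\|_{L^2\II}$, whence the contribution is bounded by $c e^{cT}T^{1-\alpha}\|\theta^0\|_{L^2\II}$. By the choice of $T_0$ in \eqref{cond:T0}, this prefactor is strictly less than $\tfrac12$ and is absorbed into the left-hand side. Under Assumption \ref{ass}(ii) I would split the sum at $k \sim N/2$ and run the discrete analogue of the Theorem \ref{thm:stab-big} argument, replacing Lemma \ref{lem:cond-Ah} by Lemma \ref{lem:cond-AhT} and exploiting the refined decay $\|A_h(T)\theta^k\|_{L^2\II} \le c t_k^{-(1-\epsilon)\alpha}\|\theta^0\|_{L^2\II}$ from Lemma \ref{lem:ful:a-priori}; for $T \ge T_1$ the resulting prefactor is again at most $\tfrac12$ and is absorbed.

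The main obstacle I foresee is verifying that the discrete convolution sums faithfully reproduce the continuous convolution integrals appearing in the proofs of Theorems \ref{thm:stab-small} and \ref{thm:stab-big}, without spurious $\tau^{-\kappa}$ or $\log(N)$ factors coming from the endpoint $k=N$; the uniform (in $\tau$, $h$) resolvent bounds collected in Lemmas \ref{lem:op:fully} and \ref{lem:op-reg-fully} are precisely what rules this out. Once the absorption succeeds, combining the noise contribution with the absorbed perturbation contribution gives $\|\theta^0\|_{L^2\II} \le c\delta\gamma^{-1}$, as claimed.
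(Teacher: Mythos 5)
Your proposal is correct and follows essentially the same route as the paper: the same error equation and discrete Duhamel representation for $\theta^0$, the bound $c\gamma^{-1}\delta$ for the noise term via Lemma \ref{lem:op-reg-fully}, the factorization through $F_{h,\tau}^N(N)^{-1}$ and Lemma \ref{lem:op:fully}(iii), and absorption of the perturbation sum by rerunning the discrete analogue of the Theorems \ref{thm:stab-small}--\ref{thm:stab-big} argument with the \textsl{a priori} bounds of Lemma \ref{lem:ful:a-priori} (the paper states this last step only by reference, whereas you spell it out). The only cosmetic point is that the quantity controlled by Lemma \ref{lem:ful:a-priori} and fed into the kernel bound should be written as $\|A_h(T)\theta^k\|_{L^2\II}$ rather than $\|\theta^k\|_{\dH 2}$.
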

\begin{proof}
Let $e_n = U_{h,\gamma}^{n,\delta} - U_{h,\gamma}^{n}$.
Then $e_n$ satisfies the relation that
\begin{equation}\label{eqn:en-1}
\bDal e_n +A_h(t_n) e_n  =0 ~~\text{for all} ~~1\le n\le N,
\quad \text{with}~~ 
 \gamma  e_0+ e_N = P_h(g_\delta -g)
\end{equation}
Using representations \eqref{eqn:repre-Udga} and \eqref{eqn:repre-g} we obtain 
\begin{equation*}
\begin{aligned}
 e_0&=(\gamma I+F_{h,\tau}^{N}(N))^{-1} \Big[P_h(g_\delta-g) - \tau \sum_{k=1}^N E_{h,\tau}^{N-k}(N)(A_h(T)-A_h(t_k))  e_k\Big].
\end{aligned}
\end{equation*}
Now we apply Lemmas \ref{lem:op:fully} and \ref{lem:ful:a-priori} to obtain
\begin{equation*}
\begin{aligned}
&\| e_0\|_{L^2\II}\le c\delta\gamma^{-1} + \|F_h^\tau(N;N)^{-1}\tau\sum_{k=1}^NE_h^\tau(N-k;N)(A_h(T)-A_h(t_k)) e_k\|_{L^2\II}\\
&\le c\delta\gamma^{-1} + c(1+T^\al)\sum_{k=1}^N\|\tau A_h(T)E_h^\tau(N-t_k;N)(A_h(T)-A_h(t_k)) e_k\|_{L^2\II}.
\end{aligned}
\end{equation*}
Then the desired results follows immediately from the a priori estimate in Lemma \ref{lem:ful:a-priori}
and the same argument in theorems \ref{thm:stab-small} and \ref{thm:stab-big}.
\end{proof}

Time discretization would give the following fully error estimate.
\begin{lemma}\label{lem:err-reg:ful} 
Let $\tuh(t)$ and $U_{h,\gamma}^{n}$ be the solutions to \eqref{eqn:fde-h-reg} and \eqref{eqn:fde:ful} respectively.
Suppose Assumption \ref{ass} is valid. 
Then there holds 
$$ \|\tuh(0)- U_{h,\gamma}^{0} \| \le c \tau |\log\tau| (h^2\gamma^{-1}+1)  \| u_0 \|_{L^2\II},$$
where the constant $c$ is independent on $\gamma$, $h$ and $t$.
\end{lemma}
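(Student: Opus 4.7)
The plan is to carry out a discrete version of the stability argument used in Theorems \ref{thm:stab-small} and \ref{thm:stab-big}, viewing the fully discrete backward solution as a perturbation of the semidiscrete one. Let $\bar U_{h,\gamma}^n$ be the auxiliary sequence from \eqref{eqn:fde:ful} (the forward fully discrete scheme started from $\bar U_{h,\gamma}^0=\tuh(0)$) and set $w_n:=U_{h,\gamma}^n-\bar U_{h,\gamma}^n$ and $r_N:=\bar U_{h,\gamma}^N-\tuh(T)$. Then $w_0=U_{h,\gamma}^0-\tuh(0)$ is exactly what we want to bound, and $w_n$ satisfies the homogeneous forward scheme $\bDal w_n+A_h(t_n)w_n=0$; subtracting the boundary conditions of \eqref{eqn:full-back-g} and \eqref{eqn:fde-h-reg} gives $\gamma w_0+w_N=-r_N$. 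Inserting the representation \eqref{eqn:repre-U} at $n=n_*=N$ and eliminating $w_N$ yields
\begin{equation*}
w_0=-(\gamma I+F_{h,\tau}^N(N))^{-1}\Bigl[r_N+\tau\sum_{k=1}^N E_{h,\tau}^{N-k}(N)(A_h(T)-A_h(t_k))w_k\Bigr].
\end{equation*}

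For the $r_N$ piece, I exploit that $F_{h,\tau}^N(N)$, $(\gamma I+F_{h,\tau}^N(N))^{-1}$ and $A_h(T)$ all commute on $X_h$ because they are functions of the single frozen operator $A_h(T)$. Combining $\|F_{h,\tau}^N(N)(\gamma I+F_{h,\tau}^N(N))^{-1}\|\le c$ (Lemma \ref{lem:op-reg-fully}) with the inverse bound in Lemma \ref{lem:op:fully}(iii) yields $\|(\gamma I+F_{h,\tau}^N(N))^{-1}v\|_{L^2\II}\le c(1+T^\alpha)\|A_h(T)v\|_{L^2\II}$ for every $v\in X_h$. Applied to $r_N$, together with the equivalence of the $A_h(T)$- and $A_h(0)$-norms on $X_h$ and with Lemma \ref{lem:err:ful:forward} (using $\log(N+1)\le c|\log\tau|$), this produces $\|(\gamma I+F_{h,\tau}^N(N))^{-1}r_N\|_{L^2\II}\le c\tau|\log\tau|\,\|\tuh(0)\|_{L^2\II}$. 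The remaining factor $\|\tuh(0)\|_{L^2\II}$ is controlled by the triangle inequality $\|\tuh(0)\|_{L^2\II}\le\|\tu(0)\|_{L^2\II}+\|\tuh(0)-\tu(0)\|_{L^2\II}$ together with Lemma \ref{lem:reg-back} ($p=0$) and Lemma \ref{lem:semi-err}, giving $\|\tuh(0)\|_{L^2\II}\le c(1+h^2\gamma^{-1})\|u_0\|_{L^2\II}$; this already accounts for the full $c\tau|\log\tau|(h^2\gamma^{-1}+1)\|u_0\|_{L^2\II}$ bound claimed.

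For the summation piece, pushing $A_h(T)$ through $(\gamma I+F_{h,\tau}^N(N))^{-1}$ by the same commutativity reduces matters to estimating $c(1+T^\alpha)\tau\sum_k \|A_h(T)^2 E_{h,\tau}^{N-k}(N)\|\,\|(I-A_h(T)^{-1}A_h(t_k))w_k\|_{L^2\II}$, which is controlled term by term by Lemma \ref{lem:op:fully}(i) and by Lemma \ref{lem:cond-Ah} (small-$T$) or Lemma \ref{lem:cond-AhT} (large-$T$). Since $w_n$ solves the forward fully discrete equation starting from $w_0$, Lemma \ref{lem:ful:a-priori} supplies $\|w_n\|_{L^2\II}\le c\min(1,t_n^{-\alpha})\|w_0\|_{L^2\II}$ and $\|A_h(T)w_n\|_{L^2\II}\le c\, t_n^{-(1-\epsilon)\alpha}\|w_0\|_{L^2\II}$. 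Mimicking the proofs of Theorems \ref{thm:stab-small} and \ref{thm:stab-big} line by line at the discrete level—in particular, in the large-$T$ case splitting the sum at $k\sim N/2$ and applying the sharper perturbation bound of Lemma \ref{lem:cond-AhT} together with the $\epsilon$-improved a priori bound on the second half—Assumption \ref{ass} forces the whole sum to be dominated by $\tfrac12\|w_0\|_{L^2\II}$, which can then be absorbed into the left-hand side to close the estimate. The main obstacle is precisely this faithful transfer of the large-$T$ stability argument to the discrete setting, since the dyadic split and sharpened perturbation bound must be re-executed on sums (not integrals) and combined with the $\tau$-uniform smoothing of $A_h(T)^2 E_{h,\tau}^{N-k}(N)$, exactly paralleling the proof of Theorem \ref{thm:stab-big} rather than invoking it as a black box.
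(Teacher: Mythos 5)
Your proposal is correct and follows essentially the same route as the paper: the same splitting through the auxiliary forward sequence $\bar U_{h,\gamma}^n$ (your $w_n$ is the paper's $e_n$ up to sign), the same discrete representation formula, Lemma \ref{lem:err:ful:forward} for the terminal defect, Lemmas \ref{lem:reg-back} and \ref{lem:semi-err} to convert $\|\tuh(0)\|_{L^2\II}$ into $(1+h^2\gamma^{-1})\|u_0\|_{L^2\II}$, and the discrete re-run of the stability argument of Theorems \ref{thm:stab-small} and \ref{thm:stab-big} with Lemma \ref{lem:ful:a-priori} to absorb the perturbation sum. Your explicit remarks on the commutativity of $(\gamma I+F_{h,\tau}^N(N))^{-1}$ with $A_h(T)$ and on the equivalence of the $A_h(0)$- and $A_h(T)$-norms only make precise steps the paper leaves implicit.
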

\begin{proof}
Let $\bar U_{h,\gamma}^n$ be the solution to \eqref{eqn:fde:ful} 
and $e_n = \bar U_{h,\gamma}^n- U_{h,\gamma}^n$, which satisfies the following equation 
\begin{equation}
\label{eqn:ful:err}
\bDal e_n  +A_h(t_n)e_n =0 ~~\text{for all} ~~1\le n\le N,
\quad \text{with}~~ 
\gamma e_0 + e_N =  \bar U_{h,\gamma}^N -  \tuh(T)=:Q.
\end{equation}
Then we apply the  representation of fully discrete scheme to derive
\begin{equation}
\label{eqn:repre:err:ful}
e_0 = (\gamma I+F_{h,\tau}^N(N)))^{-1}\Big[Q-\sum_{k=1}^N \tau E_{h,\tau}^{N-k}(N)(A_h(T)-A_h(t_k))e_k\Big].
\end{equation}
Lemmas \ref{lem:op:fully} and  \ref{lem:op-reg-fully} give that 
\begin{align*}
\|e_0\|_{L^2\II}
&\le  \Big\| F_{h,\tau}^N(N)^{-1}
\Big[Q-\sum_{k=1}^N \tau \E_h^\tau (N-k;N)(A_h(T)-A_h(t_k))e_k\Big]\Big\|_{L^2\II}\\
&\le c_T\|A_h(T)Q\|_{L^2\II} + c(1+T^\al)\|\sum_{k=1}^N\tau A_h(T) E_{h,\tau}^{N-k}(N) (A_h(T)-A_h(t_k))e_k\|_{L^2\II}.
\end{align*}
This combined with Lemma \ref{lem:err:ful:forward} leads to
 \begin{equation*}
\|e_0\|_{L^2\II} \le c_T \tau |\log\tau| \|\tuh(0)\|_{L^2\II}
+ c(1+T^\al)\|\sum_{k=1}^N\tau A_h(T) E_{h,\tau}^{N-k}(N) (A_h(T)-A_h(t_k))e_k\|_{L^2\II}.
\end{equation*}
Then by applying the \textsl{a priori} estimate in Lemma \ref{lem:ful:a-priori}
and the same argument in Theorems \ref{thm:stab-small} and \ref{thm:stab-big}, we derive
$$\|e_0\|_{L^2\II}  \le c_T \tau |\log\tau| \|\tuh(0)\|_{L^2\II}.$$
Finally, the Lemmas \ref{lem:reg-back} and \ref{lem:semi-err}
leads to the desired result.
\end{proof}

%

Now we are ready to state the main theorem showing the error of the numerical reconstruction from noisy data.
The proof is a direct result of  Lemma \ref{lem:err-reg0}, \ref{lem:semi-err}, \ref{lem:err-ful-noise} and \ref{lem:err-reg:ful}.
\begin{theorem}\label{thm:fully-err}
Let $U_{h,\gamma}^{0,\delta}$ be the numerical reconstructed initial data 
using the fully discrete scheme \eqref{eqn:back-fully},
and $u_0$ be the exact initial data.
Suppose Assumption \ref{ass} is valid. Then if $\| u_0 \|_{\dH q}\le c$ with $q\in (0,2]$ there holds 
\begin{equation*}
\|U_{h,\gamma}^{0,\delta}-u_0\|_{L^2\II}\le c\Big(\gamma^\frac q2+ \delta\gamma^{-1} +
h^2\gamma^{-1}+\tau {|\log\tau|} (h^2\gamma^{-1}+1)  \Big) \end{equation*}
Moreover, for $u_0\in L^2\II$, there holds
\begin{equation*}
\| U_{h,\gamma}^{0,\delta} -  u(0)\|_{L^2\II} \rightarrow 0\quad \text{as}~~\delta,\gamma,h,\tau\rightarrow0,
~~\frac\delta\gamma\rightarrow0~~\text{and}~~ \frac{h^2}{\gamma} \rightarrow0.
\end{equation*}
\end{theorem}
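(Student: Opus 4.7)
The plan is to prove the error estimate by triangle inequality, splitting the total error into four pieces that are each controlled by one of the previously established lemmas. Specifically, I would write
\begin{equation*}
U_{h,\gamma}^{0,\delta} - u_0 = \bigl(U_{h,\gamma}^{0,\delta} - U_{h,\gamma}^{0}\bigr) + \bigl(U_{h,\gamma}^{0} - \tuh(0)\bigr) + \bigl(\tuh(0) - \tu(0)\bigr) + \bigl(\tu(0) - u_0\bigr),
\end{equation*}
where $U_{h,\gamma}^{0}$ is the fully discrete reconstruction with exact data $g$, $\tuh(0)$ is the semidiscrete regularized reconstruction, and $\tu(0)$ is the continuous regularized reconstruction. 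Each term corresponds precisely to one error source (noise, time discretization, spatial discretization, regularization).

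Next, I would invoke the four lemmas in order. Lemma \ref{lem:err-ful-noise} handles the noise propagation, giving $\|U_{h,\gamma}^{0,\delta} - U_{h,\gamma}^{0}\|_{L^2\II} \le c\delta\gamma^{-1}$. Lemma \ref{lem:err-reg:ful} handles the temporal discretization error, yielding $\|U_{h,\gamma}^{0} - \tuh(0)\|_{L^2\II} \le c\tau|\log\tau|(h^2\gamma^{-1}+1)\|u_0\|_{L^2\II}$. Lemma \ref{lem:semi-err} controls the spatial semidiscretization error by $ch^2\gamma^{-1}\|u_0\|_{L^2\II}$. Finally, Lemma \ref{lem:err-reg0} yields the regularization bias $c\gamma^{q/2}\|u_0\|_{\dH q}$ when $u_0\in\dH q$. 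Adding the four bounds and using $\|u_0\|_{\dH q}\le c$ gives the claimed estimate.

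For the convergence statement when $u_0\in L^2\II$, I would mimic the density argument already used in the proof of Lemma \ref{lem:err-reg0}. Fix $\varepsilon>0$ and choose $\tilde u_0\in\dH 2$ with $\|u_0-\tilde u_0\|_{L^2\II}<\varepsilon$. Let $\tilde U_{h,\gamma}^{0,\delta}$ denote the corresponding fully discrete reconstruction starting from terminal data for $\tilde u_0$. The quantitative part just proved bounds $\|\tilde U_{h,\gamma}^{0,\delta}-\tilde u_0\|_{L^2\II}$ by the stated right-hand side with $q=2$, which tends to zero under the stated regime on $\delta,\gamma,h,\tau$. The remaining difference $\|U_{h,\gamma}^{0,\delta}-\tilde U_{h,\gamma}^{0,\delta}\|_{L^2\II}$ is handled by the stability of the fully discrete backward scheme with respect to the initial datum, which follows from the same arguments (through the representation \eqref{eqn:repre-Udga} and Lemma \ref{lem:op-reg-fully}) and gives a bound proportional to $\|u_0-\tilde u_0\|_{L^2\II}$ uniformly in the discretization and regularization parameters. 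Triangle inequality then closes the argument.

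The main obstacle is essentially bookkeeping: there is no new analytical ingredient, but one must verify that the constants in the four preceding lemmas indeed combine cleanly, in particular that the $\|u_0\|_{L^2\II}$-dependence on the right-hand sides of Lemmas \ref{lem:semi-err} and \ref{lem:err-reg:ful} is consistent with the $\|u_0\|_{\dH q}$-dependence in Lemma \ref{lem:err-reg0} after applying the embedding $\dH q\hookrightarrow L^2\II$. A minor subtlety in the convergence statement is ensuring uniform-in-$(h,\tau,\gamma)$ stability of the discrete regularized backward map, but this is a discrete analogue of the stability estimate already established in Theorems \ref{thm:stab-small} and \ref{thm:stab-big} and requires no fundamentally new work.
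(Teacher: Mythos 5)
Your proposal is correct and is essentially the paper's own argument: the paper proves this theorem precisely by the four-way triangle-inequality splitting you describe, citing Lemmas \ref{lem:err-ful-noise}, \ref{lem:err-reg:ful}, \ref{lem:semi-err} and \ref{lem:err-reg0} for the noise, temporal, spatial and regularization contributions respectively. The only (harmless) deviation is in the $L^2(\Omega)$ convergence statement, where your extra density argument at the fully discrete level is not needed: the second part of Lemma \ref{lem:err-reg0} already treats $u_0\in L^2(\Omega)$ via density at the continuous level, and the other three terms of the same splitting vanish under the stated parameter regime because those lemmas require only $u_0\in L^2(\Omega)$, so the limit follows directly without re-establishing uniform stability of the discrete regularized backward map.
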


The \textsl{a priori} error estimate in Theorem \ref{thm:fully-err} give a useful guideline to choose the regularization
parameter $\gamma$ and the discretization parameters $h$ and $tau$ according to the noise level $\delta$.
In particular, if $u_0 \in \dH q$, by choosing
\begin{equation*}
\gamma \sim \delta^{\frac{2}{q+2}},~~h\sim \delta^\frac 12~~\text{and}~~\tau |\log \tau| \sim \delta^{\frac{q}{q+2}},
\end{equation*}
we obtain the optimal approximation error
\begin{equation*}
\|U_{h,\gamma}^{0,\delta} - u(0)\|_{L^2\II}\le c\delta^{\frac{q}{q+2}}.
\end{equation*}
 
\section{Numerical Experiments}\label{sec:5}

Now we test several two dimensional examples with $\Omega=(0,1)^2$ in order to illustrate our theoretical results. 
Throughout the section, we apply the standard Galerkin piecewise linear  FEM with uniform mesh size $h=1/(M+1)$ for the space discretization, and the backward Euler convolution quadrature method with uniform mesh size $\tau = T/N$ for time discretization. We solve the direct problem to obtain the exact observation data by using fine meshes, i.e. $h=1/100$, $\tau = T/500$. Then we compute the noisy observational data by
\begin{equation*}
g_\delta = u(T)+ \varepsilon\delta\sup_{x\in\Omega} u(x,T)
\end{equation*}
where $\varepsilon$ is generated from standard Gaussian distribution and $\delta$ denotes the related noisy level.

We begin with the following time-dependent diffusion coefficient:
\begin{equation*}
a_1(x,y,t) = \begin{pmatrix}
y\sin((1+t)^{0.5})+2 & -0.1 \\
-0.1 & \sin(\pi x)(t+1.2)^{-0.8}+2
\end{pmatrix},
\end{equation*} 
satisfying conditions \eqref{Cond-1-t}-\eqref{Cond-2-t} and Assumption \ref{ass:large-t}. 
We solve the linear system \eqref{eqn:back-fully} by using  the conjugate gradient method.
\vskip5pt
\paragraph{\bf Smooth initial data}
We begin with a smooth initial data:
$$u_0 = \sin(2\pi x)\sin(2\pi y) \in \dH2.$$
According to Theorem \ref{thm:fully-err}, 
we compute $U_{h,\gamma}^{0,\delta}$ with $\gamma \sim \sqrt{\delta}$ and $h,\tau \sim \sqrt{\delta}$, and expect a convergence of order $O(\sqrt{\delta})$. 
Numerical results presented in Figure \ref{fig:fully:smooth:err1} 
fully support the theoretical result. On the other hand, our numerical results indicate that 
the recovery is stable for all $T$, might be neither very large nor very small.
This interesting phenomenon warrants further investigation in the future.
In Figure \ref{fig:profile:case1}, we present profiles of solutions and errors  with different noise level.
\begin{figure}[htbp]
\centering
\begin{subfigure}{.33\textwidth}
\centering
\includegraphics[scale=0.33]{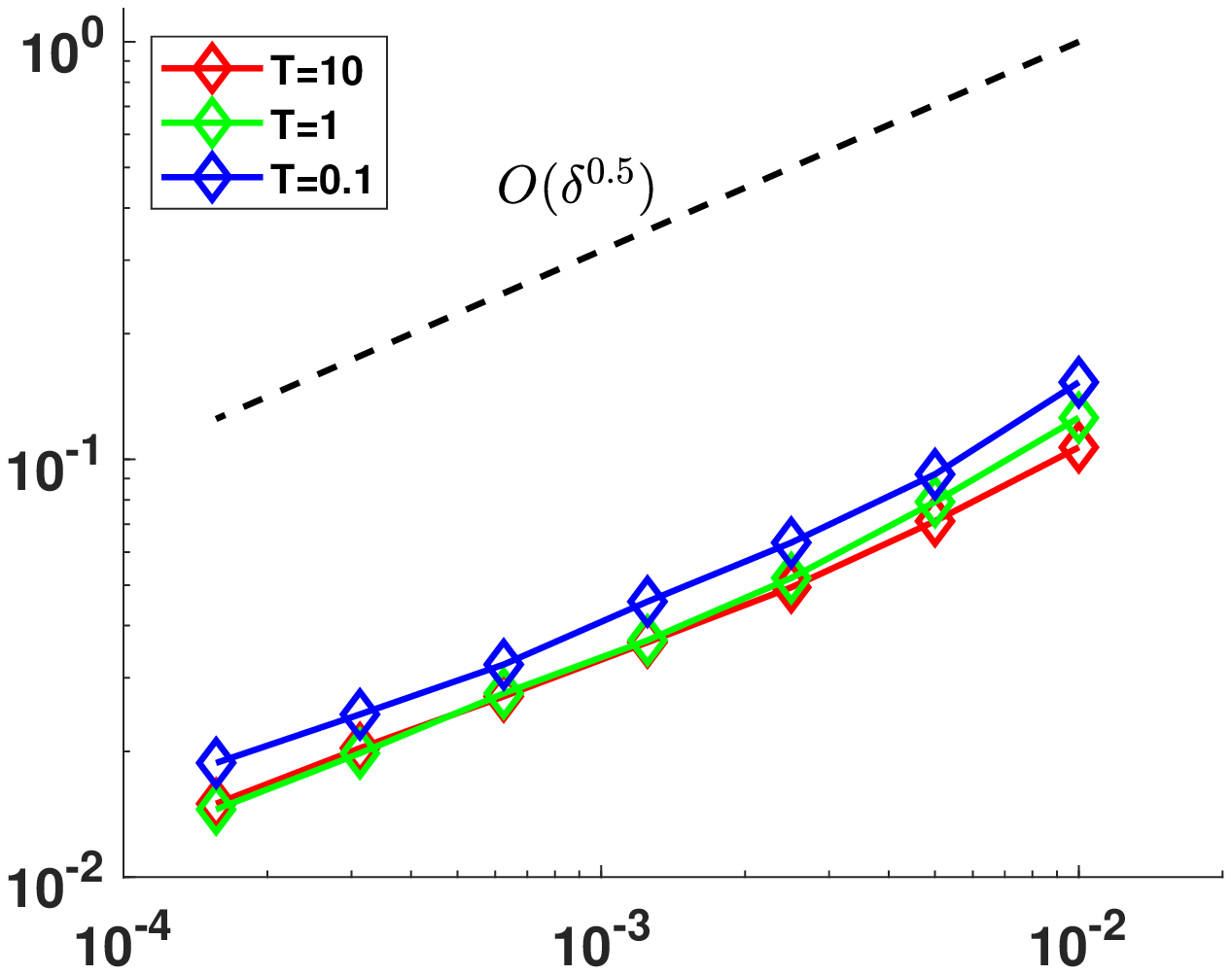}
\caption{$\alpha = 0.25$.}
\end{subfigure}%
\begin{subfigure}{.33\textwidth}
\centering
\includegraphics[scale=0.33]{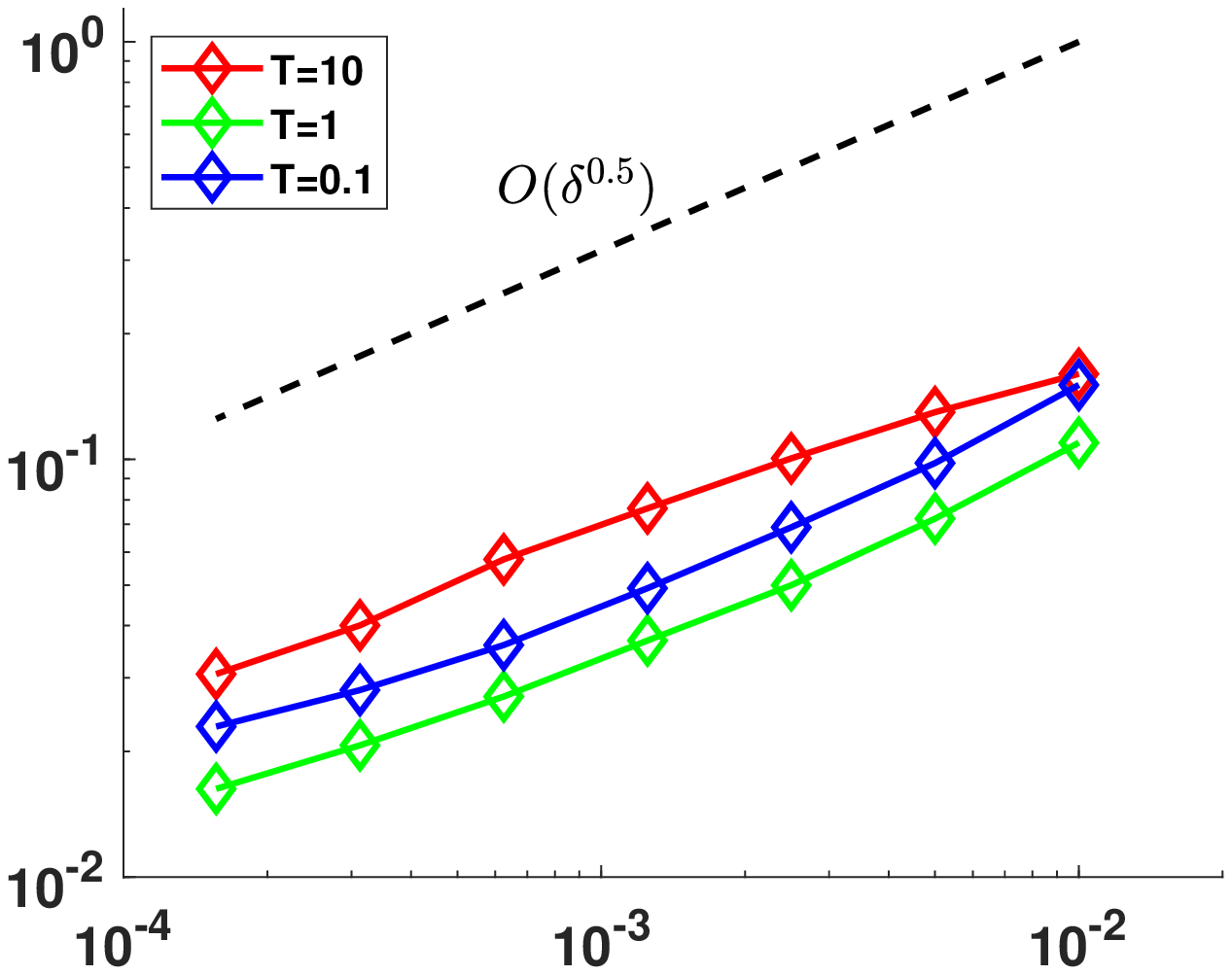}
\caption{$\alpha = 0.5$.}
\end{subfigure}%
\begin{subfigure}{.33\textwidth}
\centering
\includegraphics[scale=0.33]{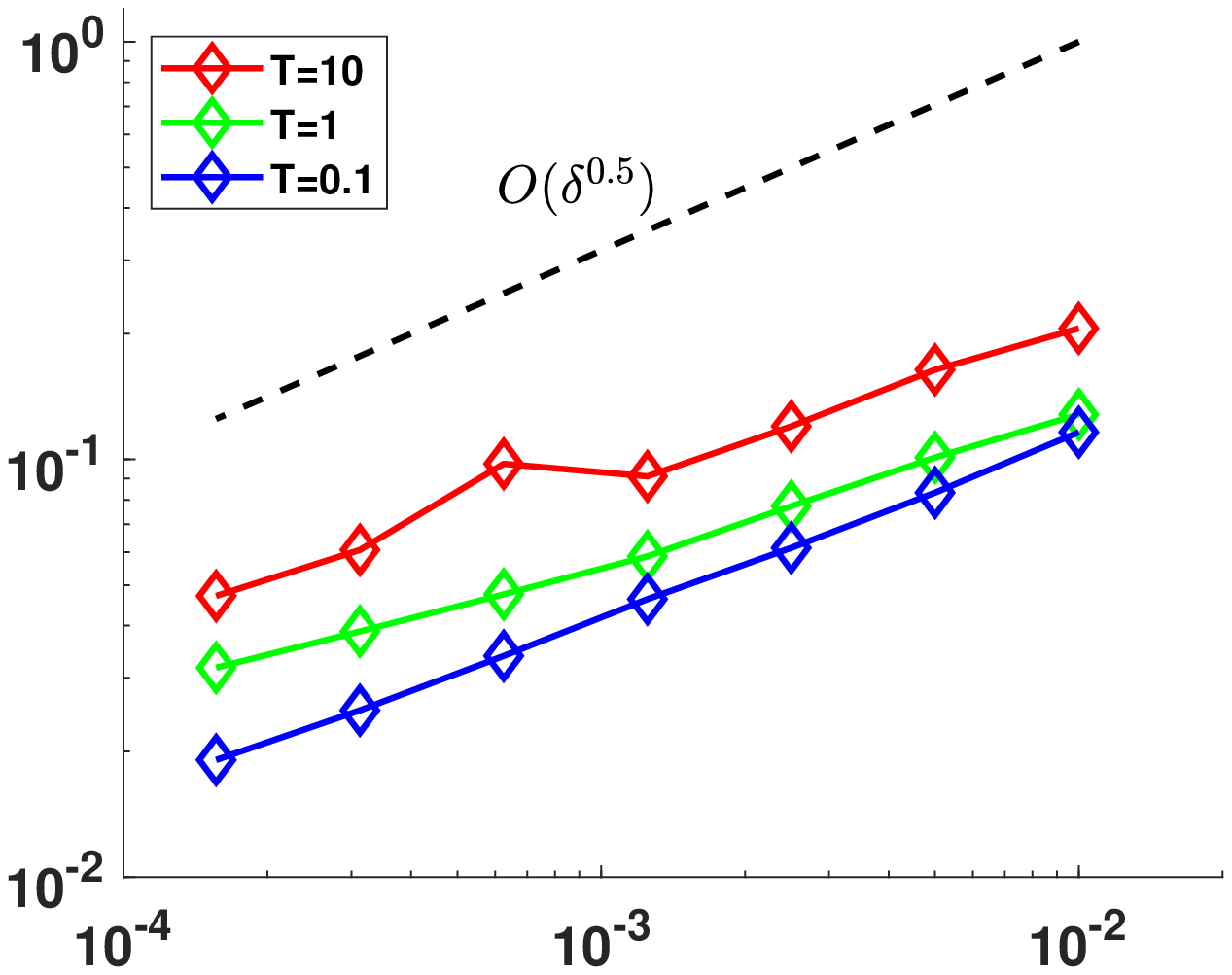}
\caption{$\alpha =0.75$.}
\end{subfigure}%
\caption{Plot of error: $a_1(x,t)$ and smooth initial data; $h=\sqrt{\delta}$, $\tau = \sqrt{\delta}/5$, $\gamma = \sqrt{\delta}/350$.}
\label{fig:fully:smooth:err1}
\end{figure}

\begin{figure}[htbp]
\begin{subfigure}{.24\textwidth}
\centering
\includegraphics[scale=0.25]{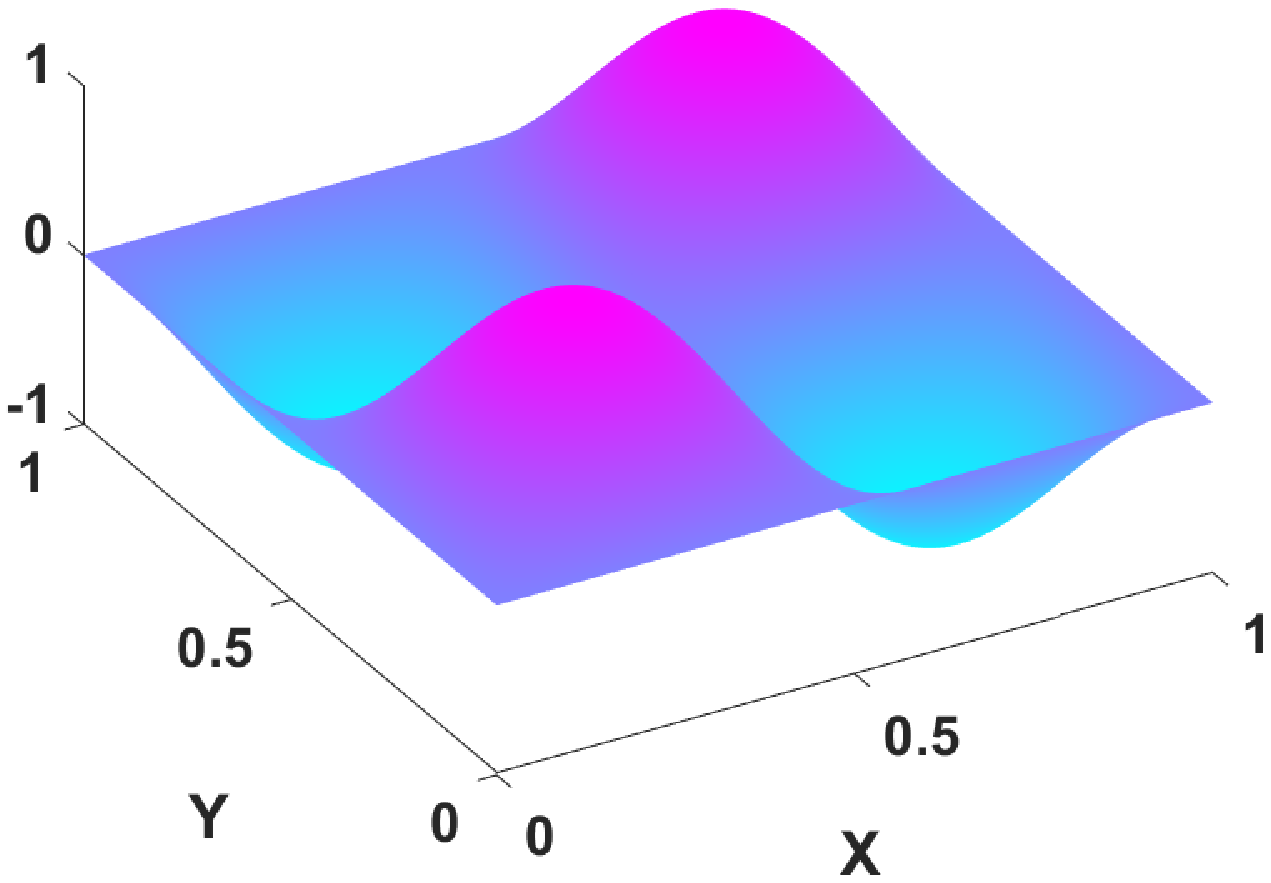}
\end{subfigure}%
\begin{subfigure}{.24\textwidth}
\centering
\includegraphics[scale=0.25]{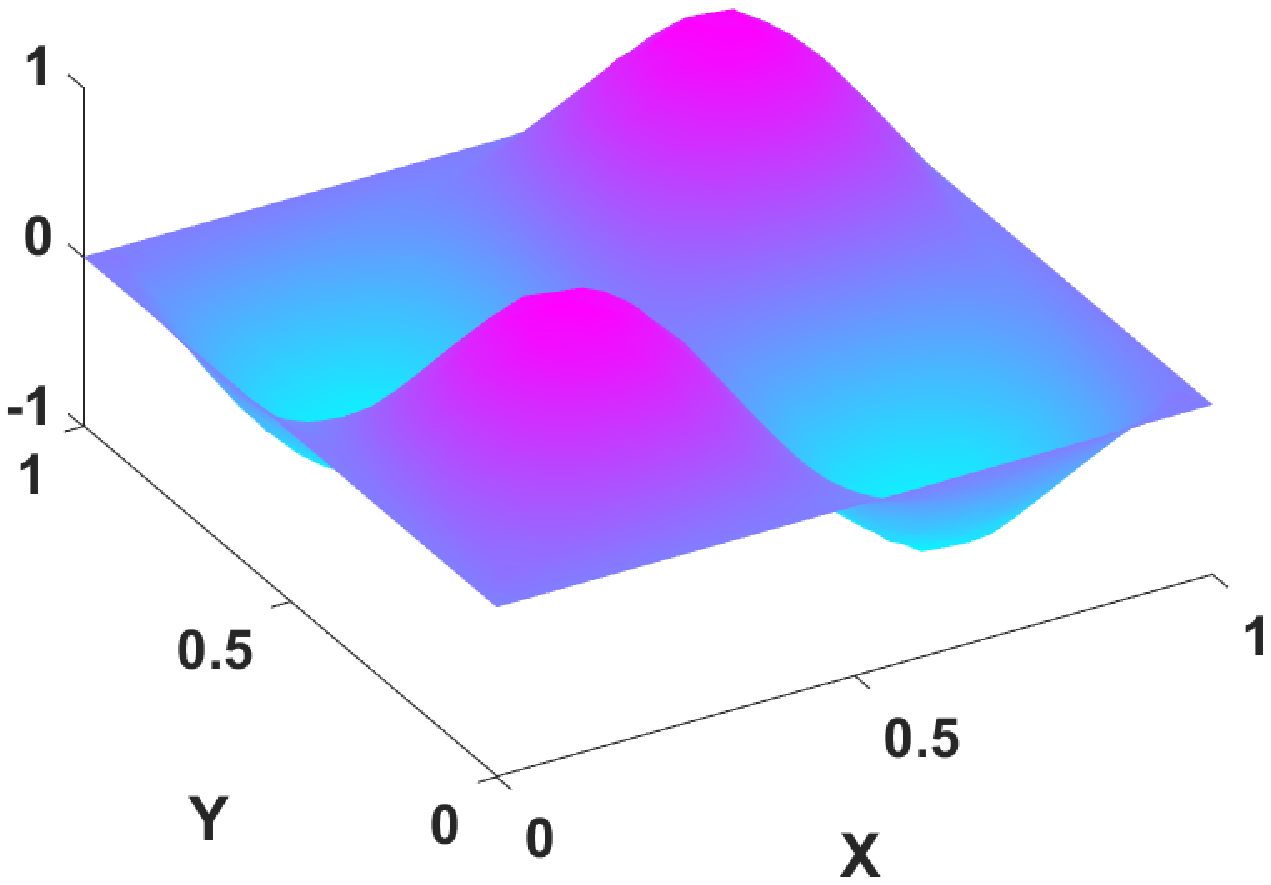}
\end{subfigure}%
\begin{subfigure}{.24\textwidth}
\centering
\includegraphics[scale=0.25]{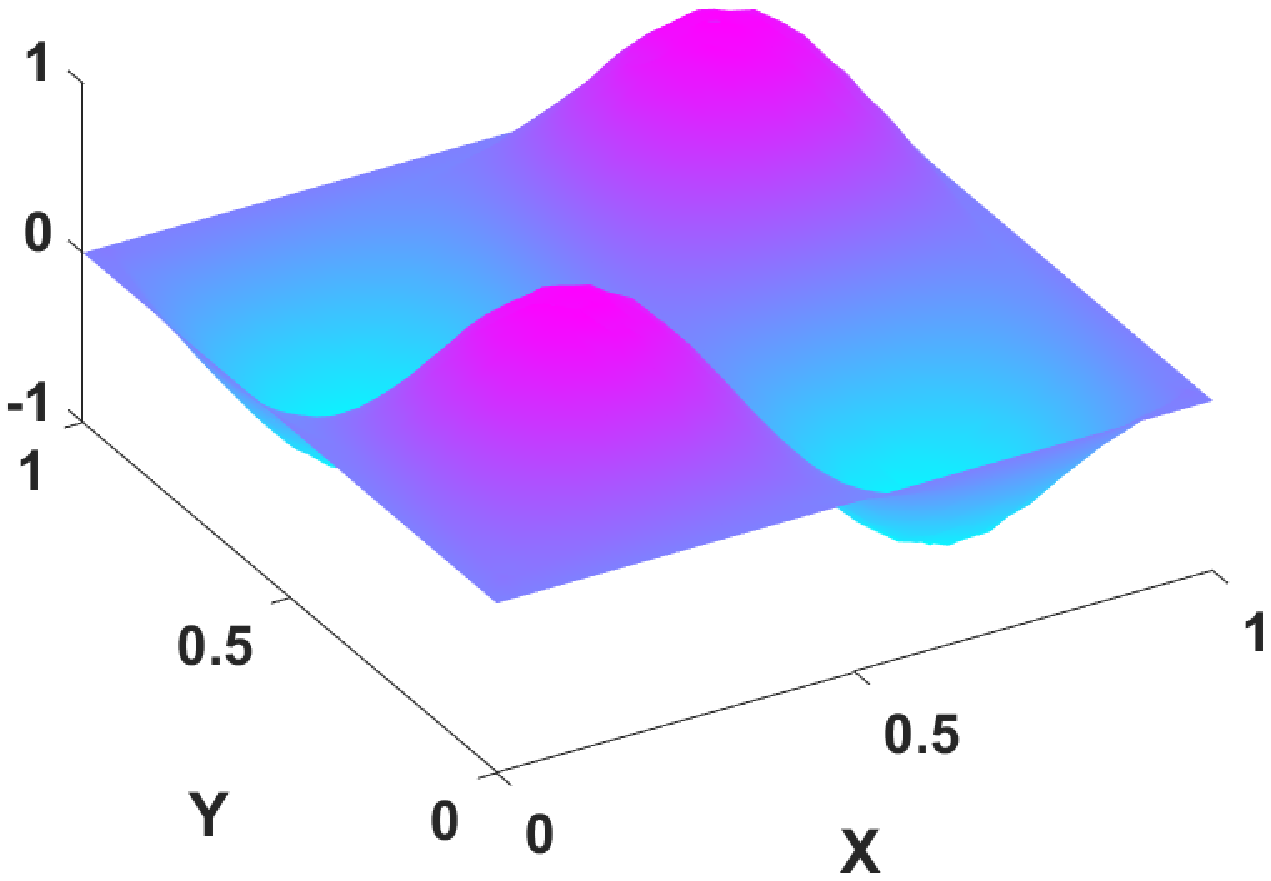}
\end{subfigure}
\begin{subfigure}{.24\textwidth}
\centering
\includegraphics[scale=0.25]{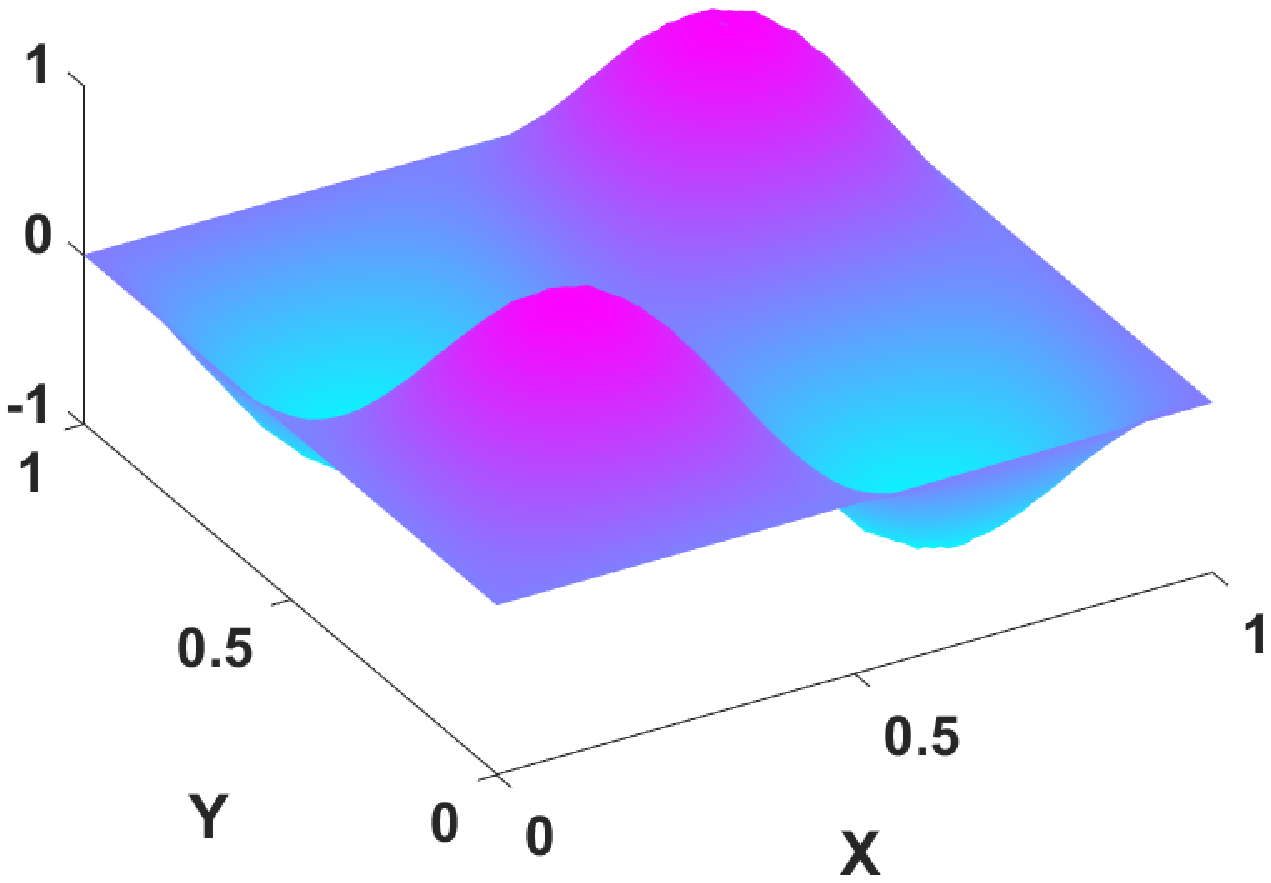}
\end{subfigure}
\newline
\raggedleft
\begin{subfigure}{.24\textwidth}
\centering
\includegraphics[scale=0.25]{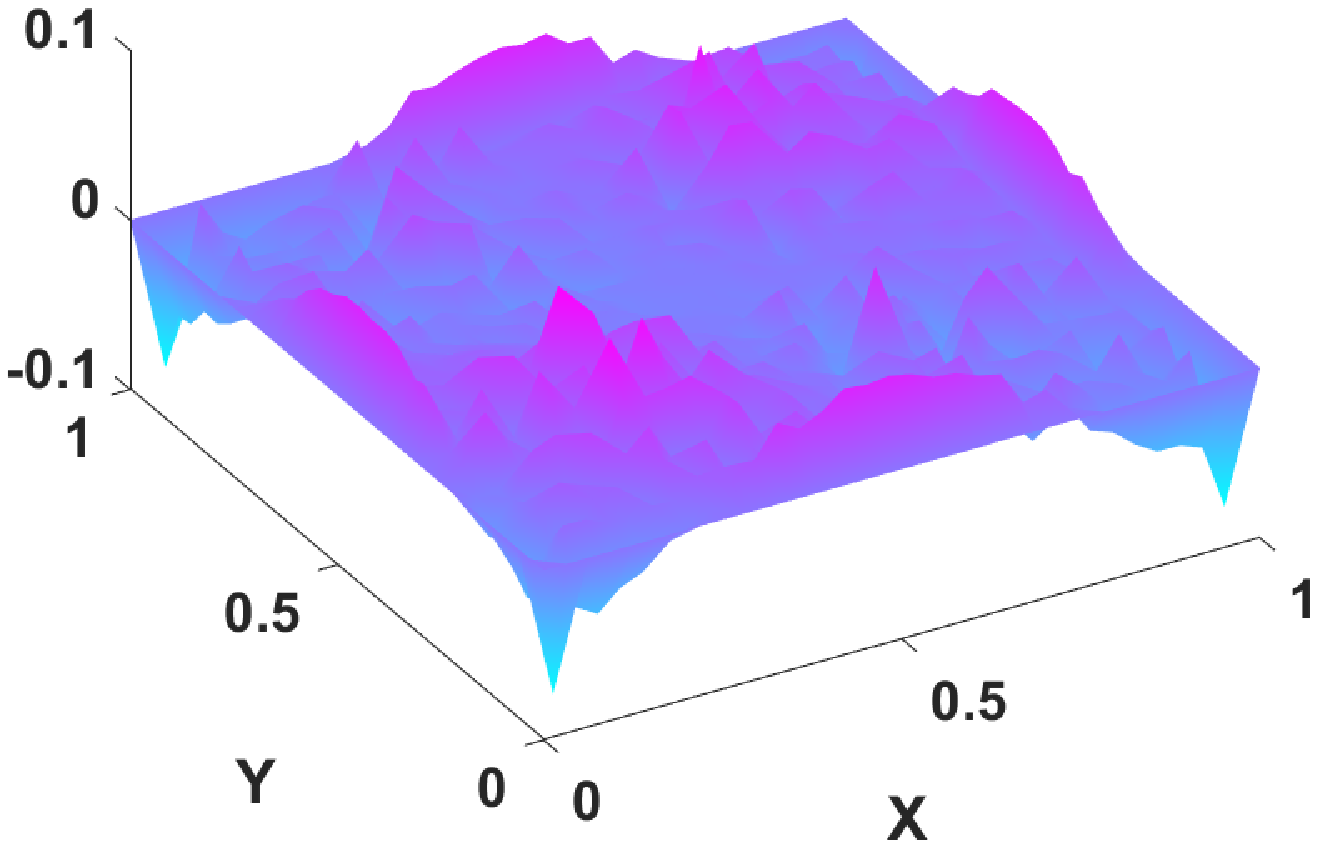}
\caption{$\delta =1e-2$.}
\end{subfigure}%
\begin{subfigure}{.24\textwidth}
\centering
\includegraphics[scale=0.25]{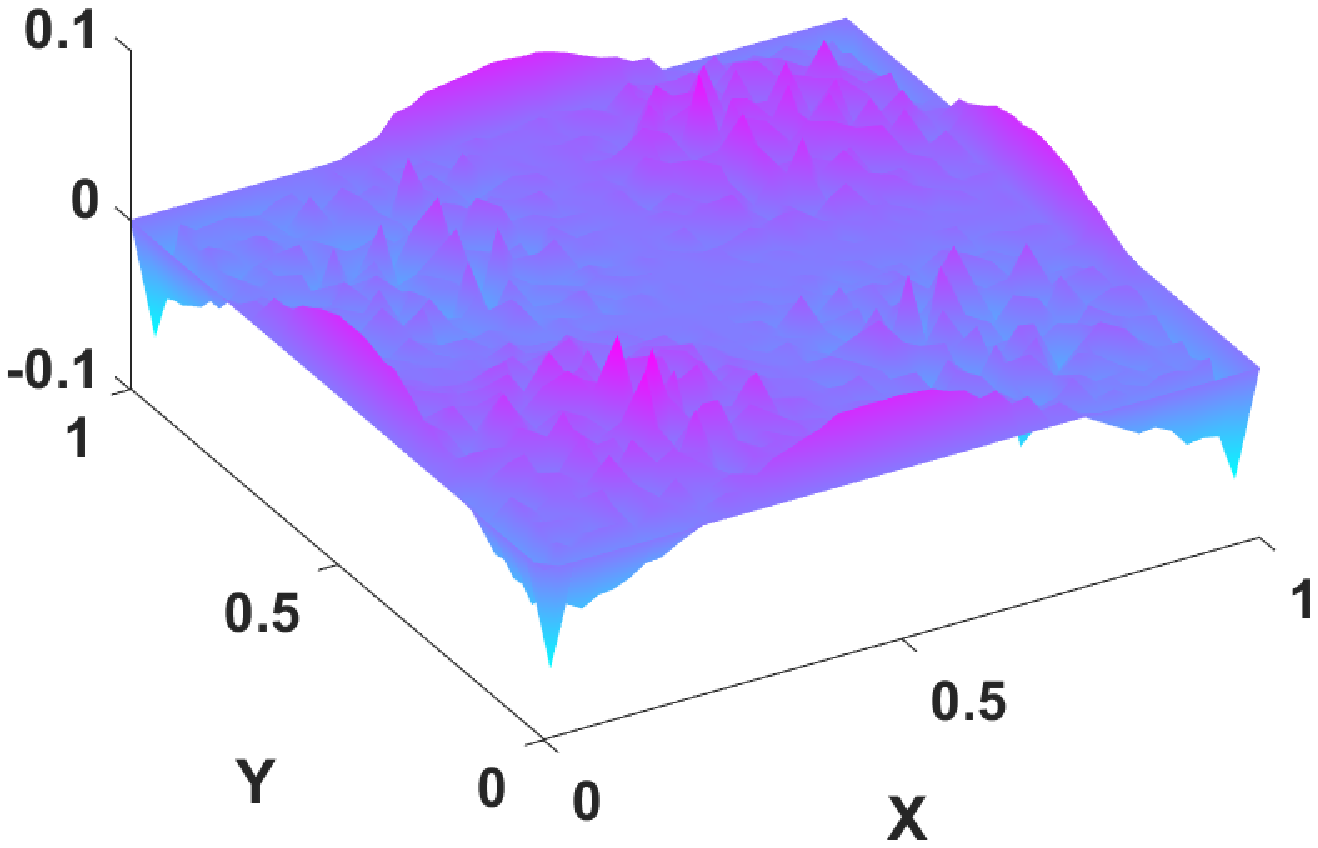}
\caption{$\delta = 5e-3$.}
\end{subfigure}
\begin{subfigure}{.24\textwidth}
\centering
\includegraphics[scale=0.25]{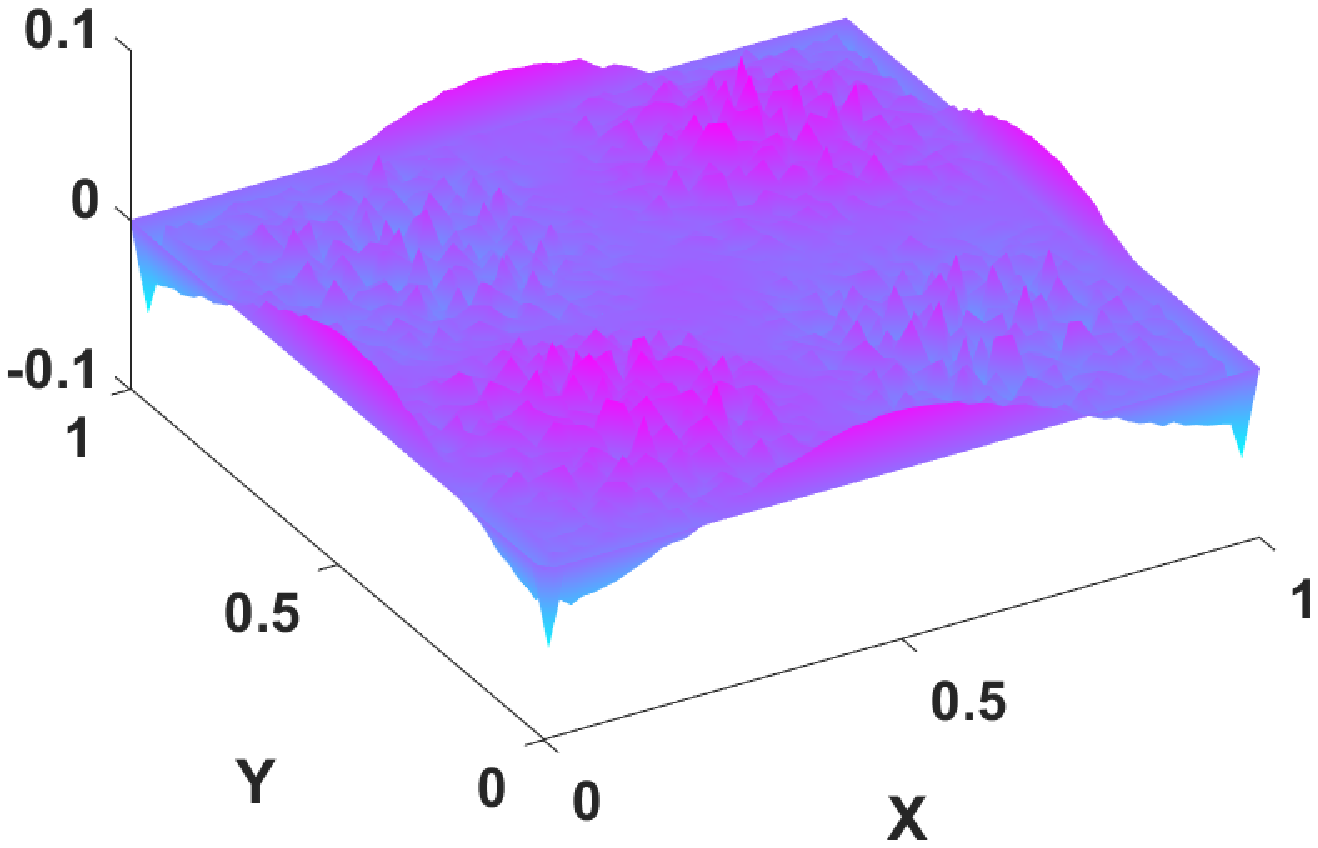}
\caption{$\delta= 2.5e-3$.}
\end{subfigure}
\caption{Profiles of Top left: Exact initial data $u_0$. Recover with $a_1(x,t)$, $\alpha =0.5$, $T=1$. The remain three columns are profiles of numerical reconstructions $\tU_0$ and theirs errors, with $h=\sqrt{\delta}$, $\tau = \sqrt{\delta}/5$, $\gamma=\sqrt{\delta}/350$.}
\label{fig:profile:case1}
\end{figure} 

\paragraph{\textbf{Nonsmooth initial data.}} In this example we consider the following nonsmooth initial condition
\begin{equation*}
u_0 = \begin{cases}
1,~~\text{if}~0.5 \le x \le 1,\\
0,~~\text{otherwise}
\end{cases}
\end{equation*}
Note that $u_0 \in \dH {\frac12-\varepsilon}$  for any $\varepsilon\in (0,\frac 12)$.
Then Theorem \ref{thm:fully-err} indicate that the optimal convergence rate  is almost $O(\delta^{0.2})$ provided that 
$\gamma =O(\delta^{0.8})$, $h = O(\sqrt{\delta})$ and $\tau = O(\delta^{0.2})$.
This is fully supported by the numerical results presented in Figure \ref{fig:fully:err:case3}. 
In Figure \ref{fig:profile:case3} we plot the profiles of solutions and errors, which also confirm that the numerical recovery is reliable.
\begin{figure}[htbp]
\centering
\begin{subfigure}{.33\textwidth}
\centering
\includegraphics[scale=0.33]{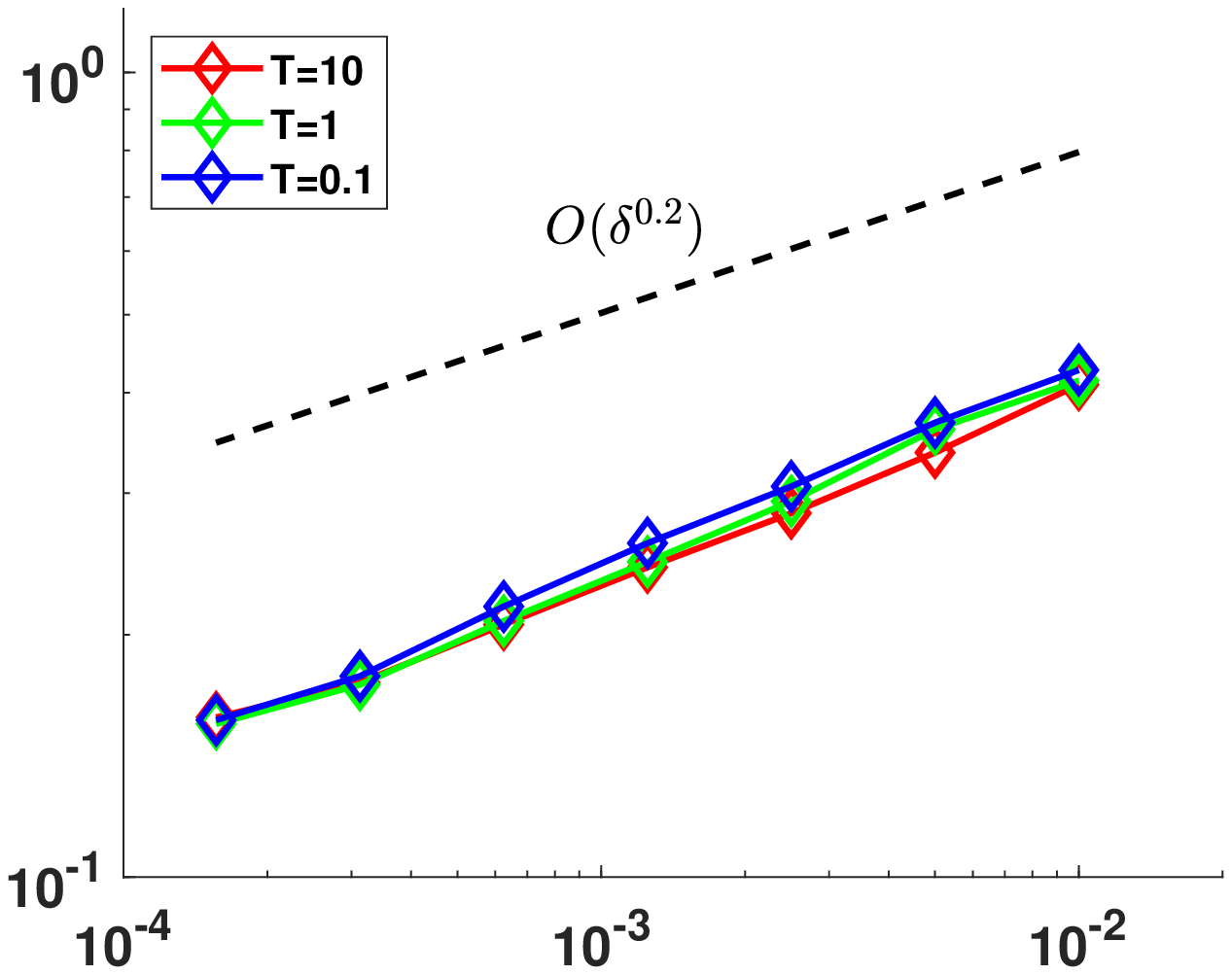}
\caption{$\alpha = 0.25$.}
\end{subfigure}%
\begin{subfigure}{.33\textwidth}
\centering
\includegraphics[scale=0.33]{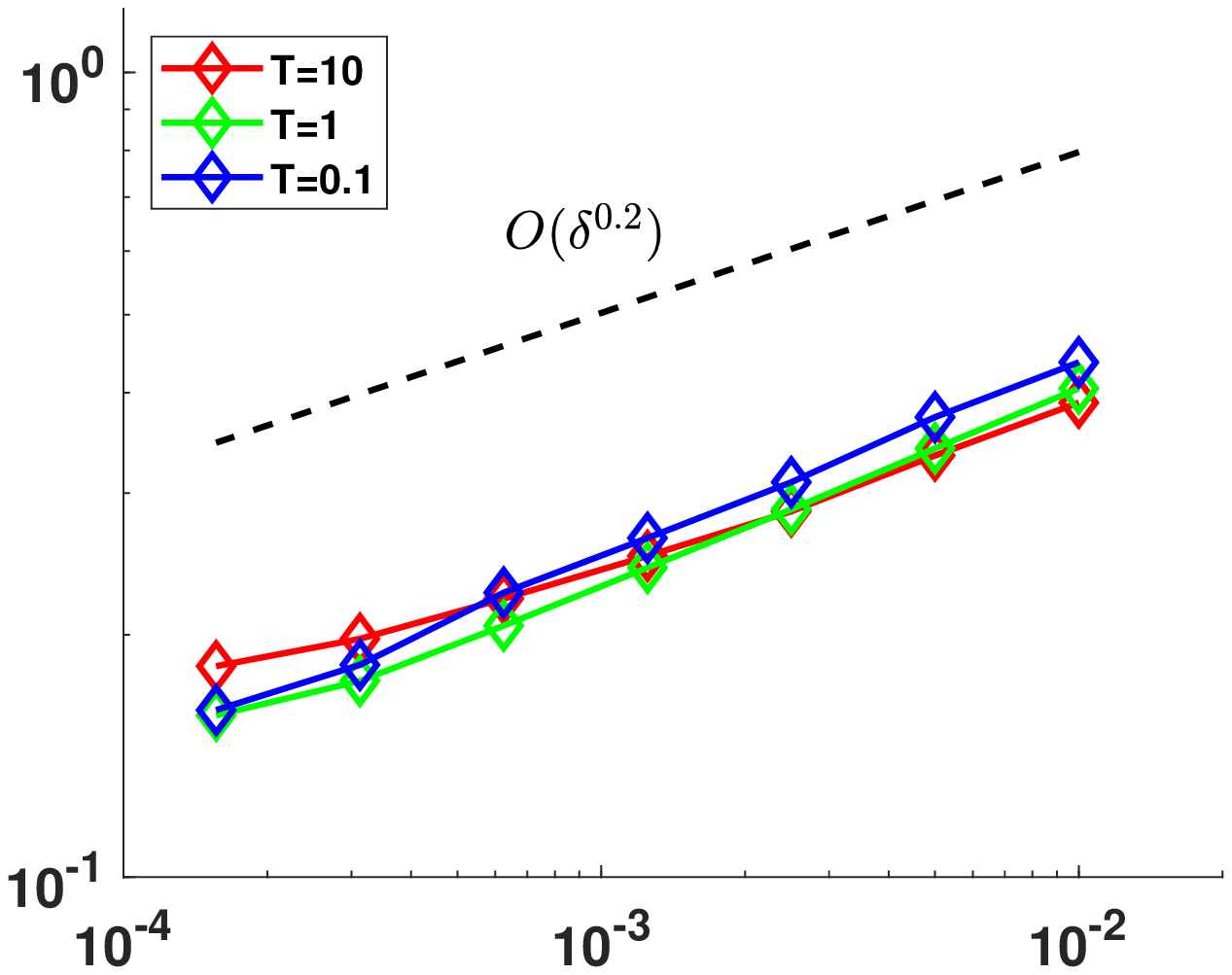}
\caption{$\alpha = 0.5$.}
\end{subfigure}%
\begin{subfigure}{.33\textwidth}
\centering
\includegraphics[scale=0.33]{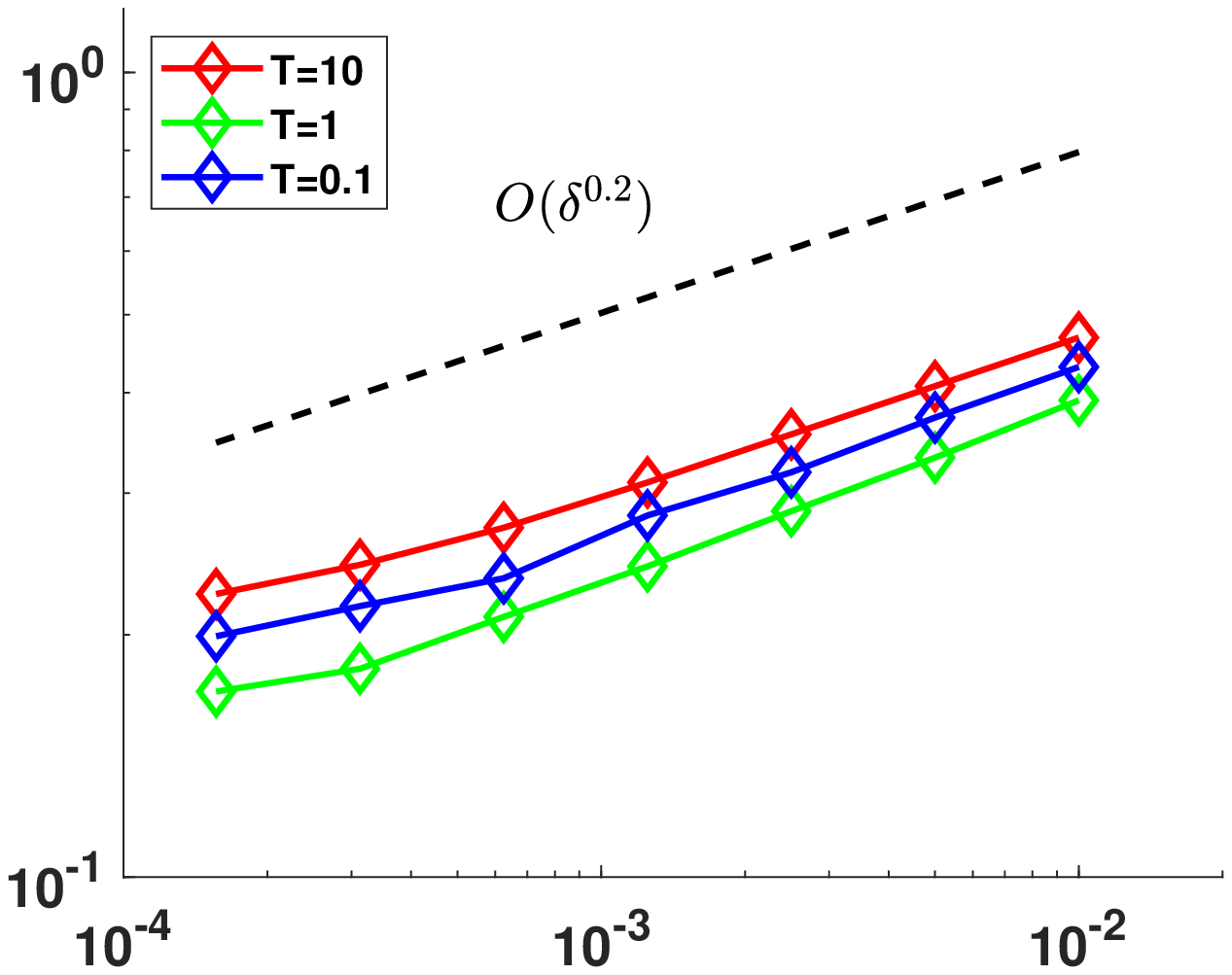}
\caption{$\alpha =0.75$.}
\end{subfigure}%
\caption{Plot of error: $a_1(x,t)$ and smooth initial data; $h=\sqrt{\delta}$, $\tau = \delta^{0.2}/20$, $\gamma = \delta^{0.8}/200$.}
\label{fig:fully:err:case3}
\end{figure}
\begin{figure}[htbp]
\begin{subfigure}{.24\textwidth}
\centering
\includegraphics[scale=0.25]{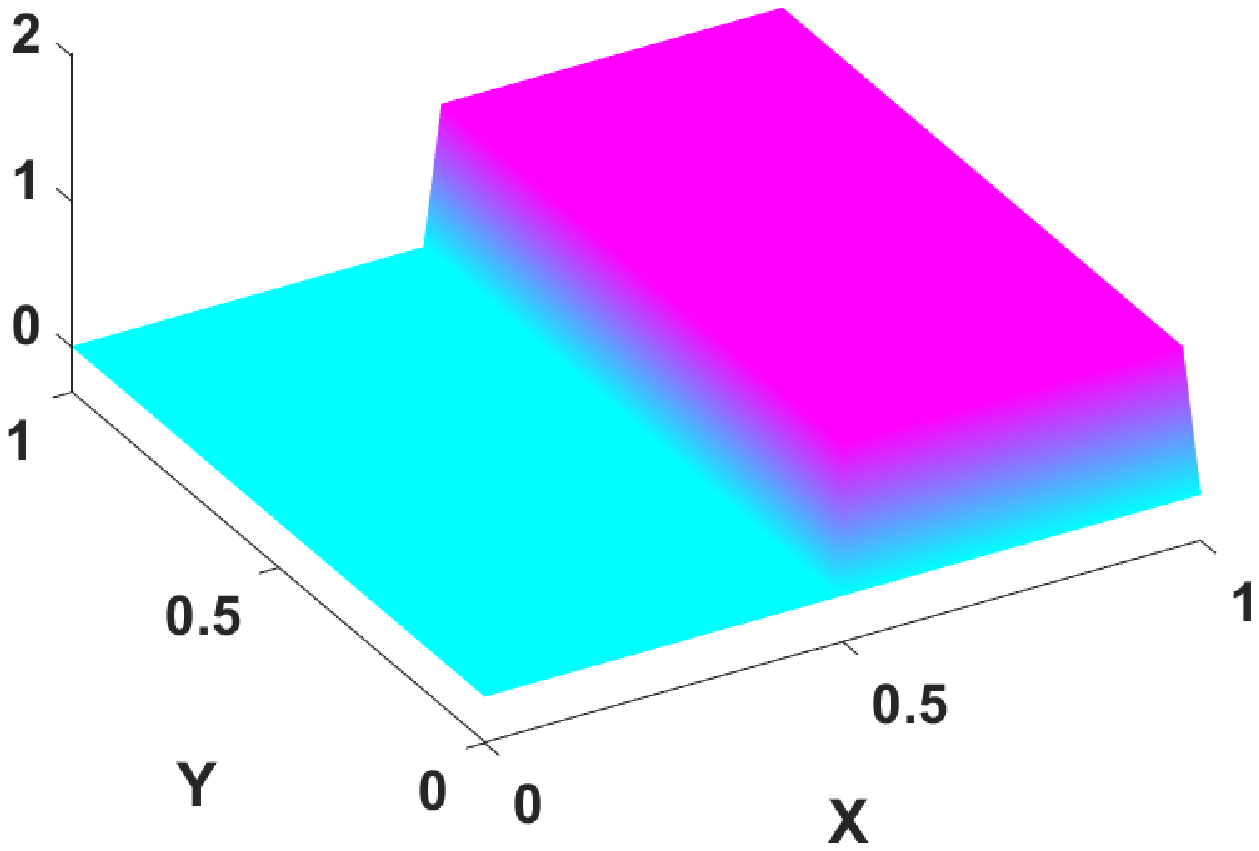}
\end{subfigure}%
\begin{subfigure}{.24\textwidth}
\centering
\includegraphics[scale=0.25]{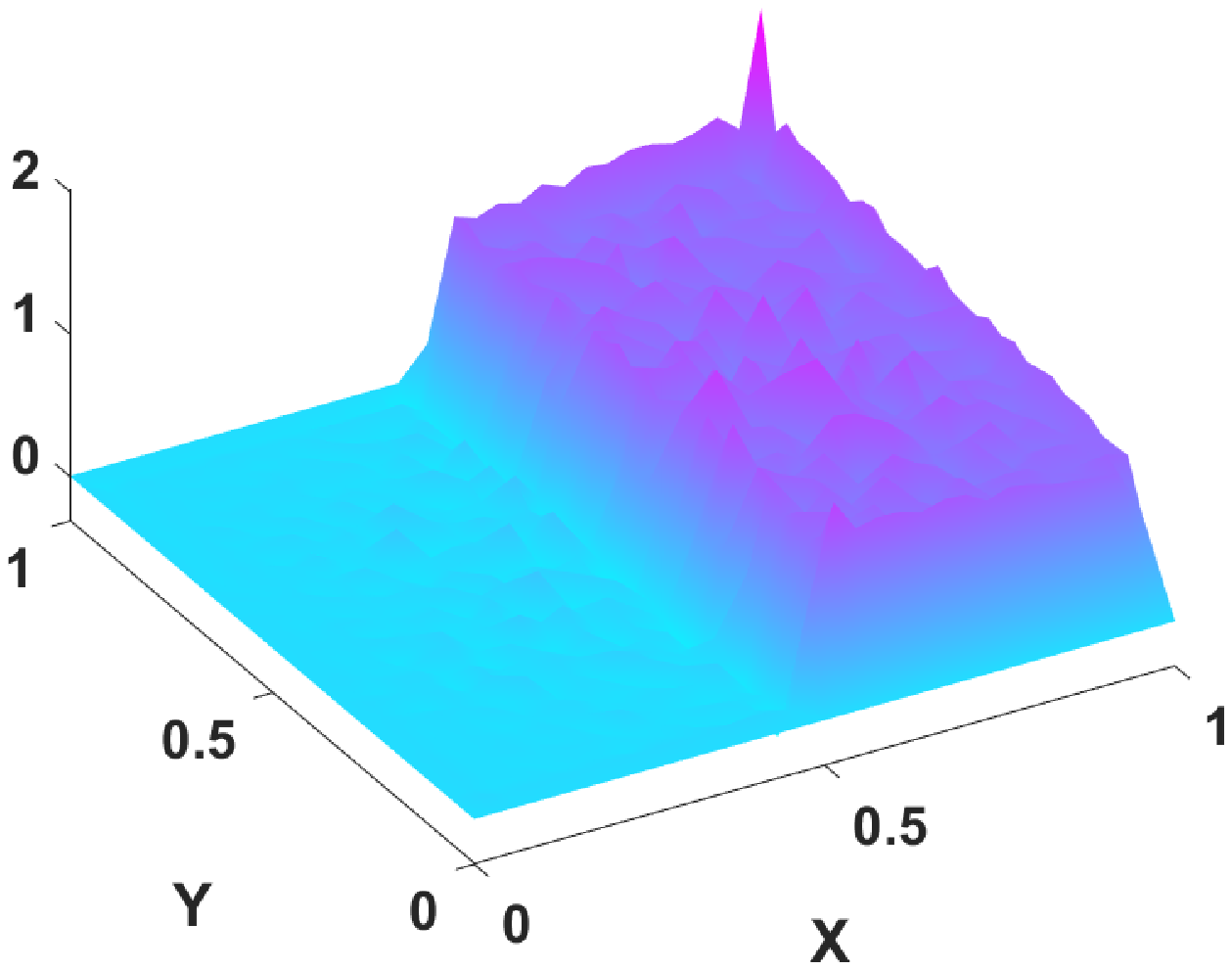}
\end{subfigure}%
\begin{subfigure}{.24\textwidth}
\centering
\includegraphics[scale=0.25]{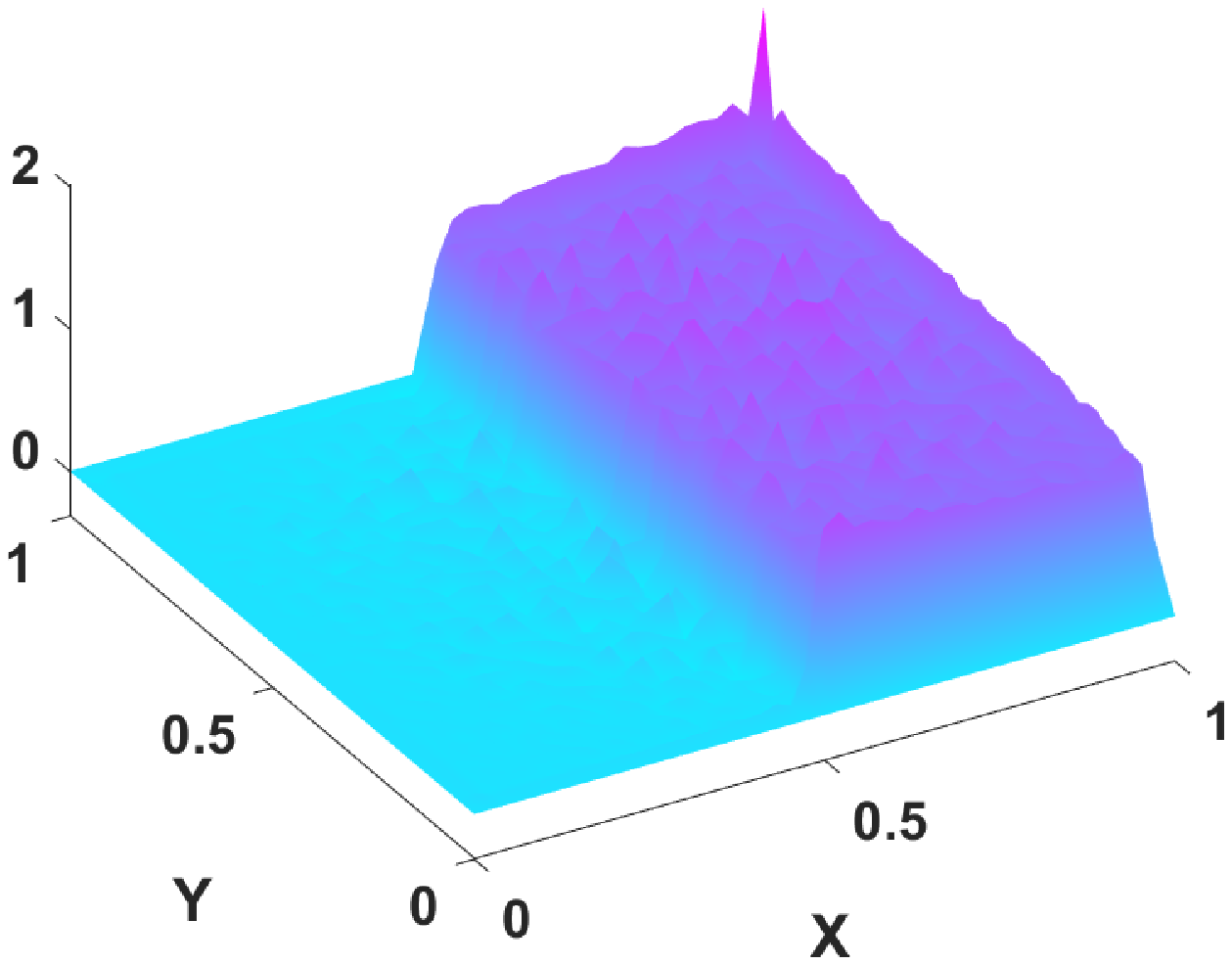}
\end{subfigure}
\begin{subfigure}{.24\textwidth}
\centering
\includegraphics[scale=0.25]{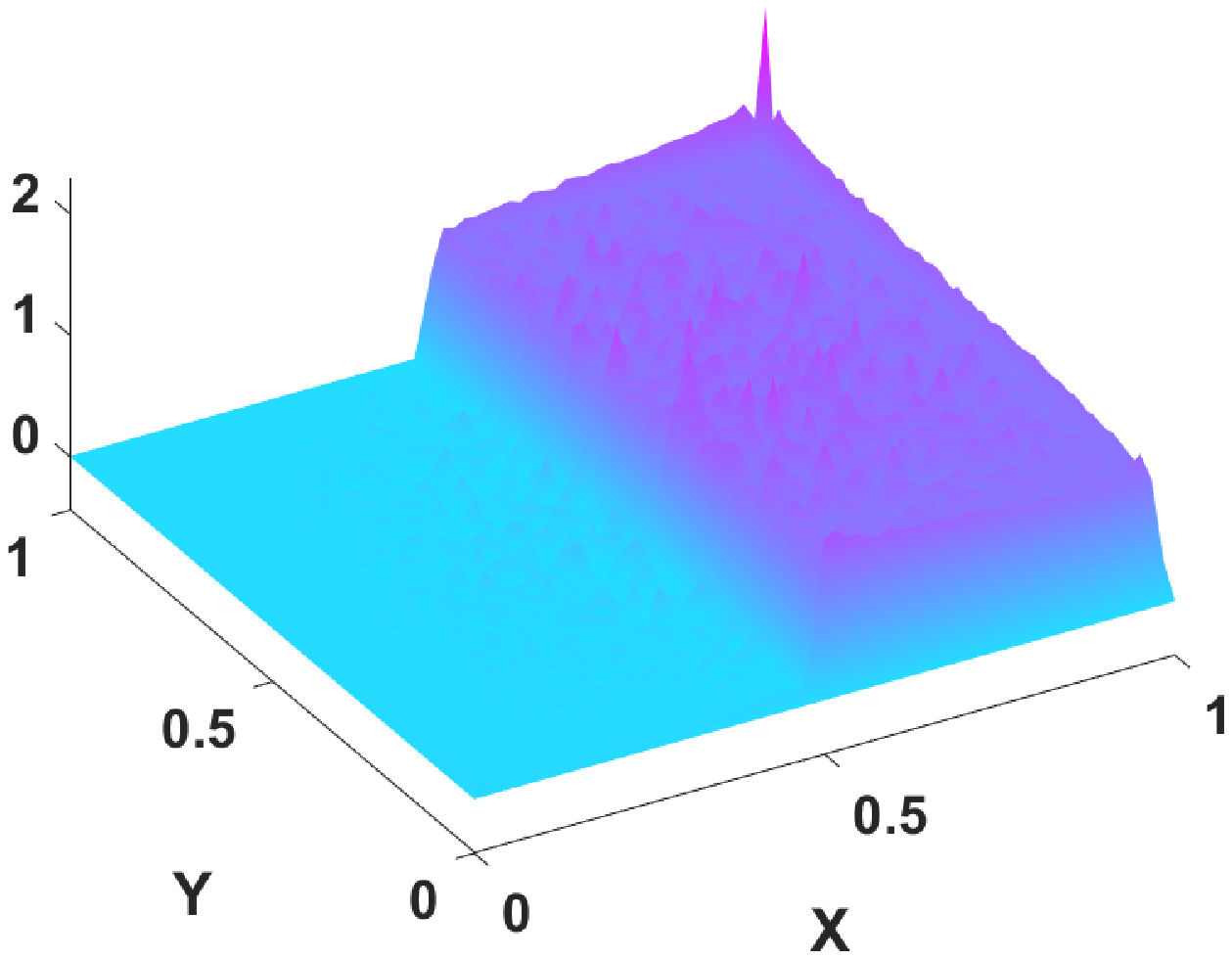}
\end{subfigure}
\newline
\raggedleft
\begin{subfigure}{.24\textwidth}
\centering
\includegraphics[scale=0.25]{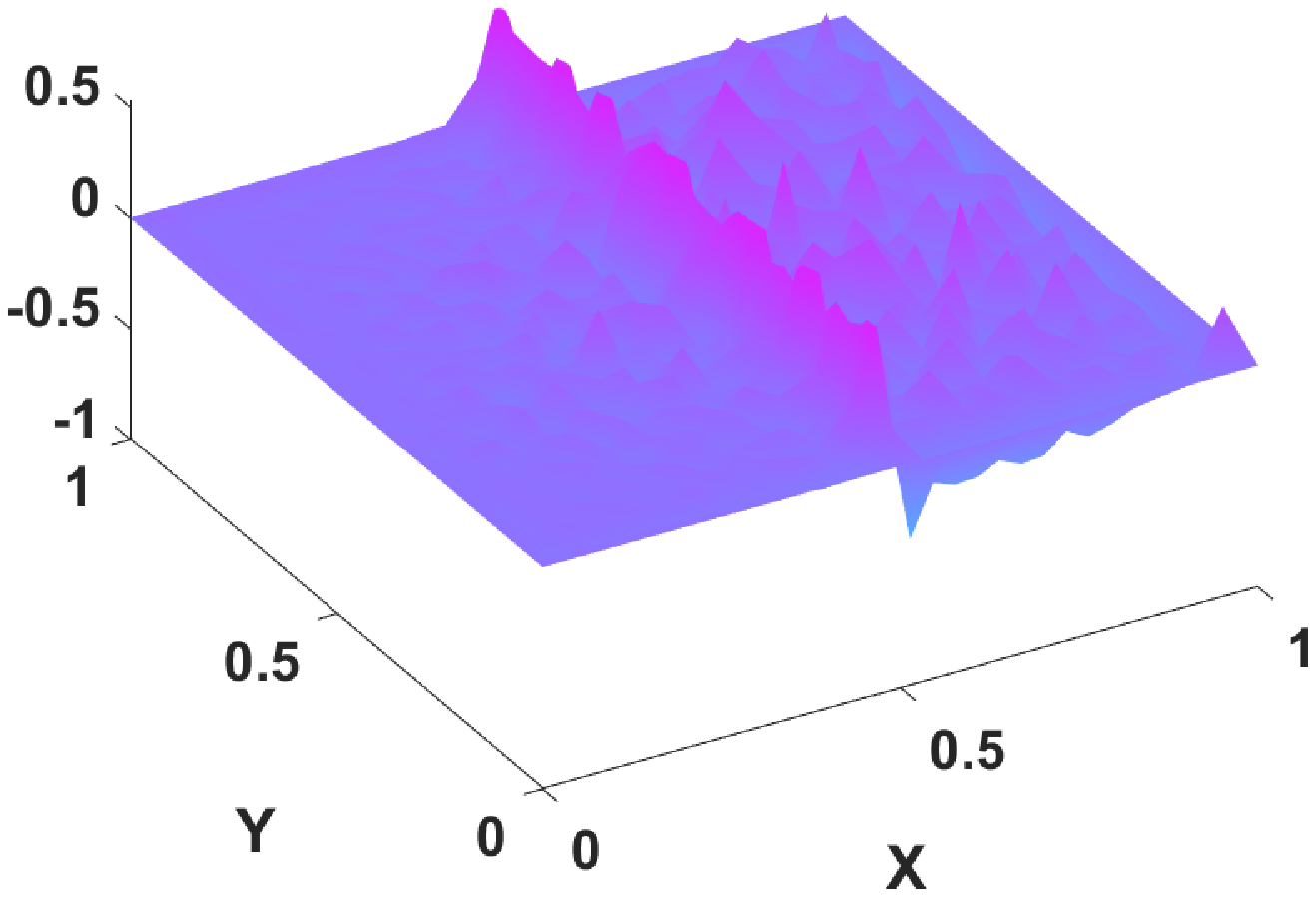}
\caption{$\delta =1e-2$.}
\end{subfigure}%
\begin{subfigure}{.24\textwidth}
\centering
\includegraphics[scale=0.25]{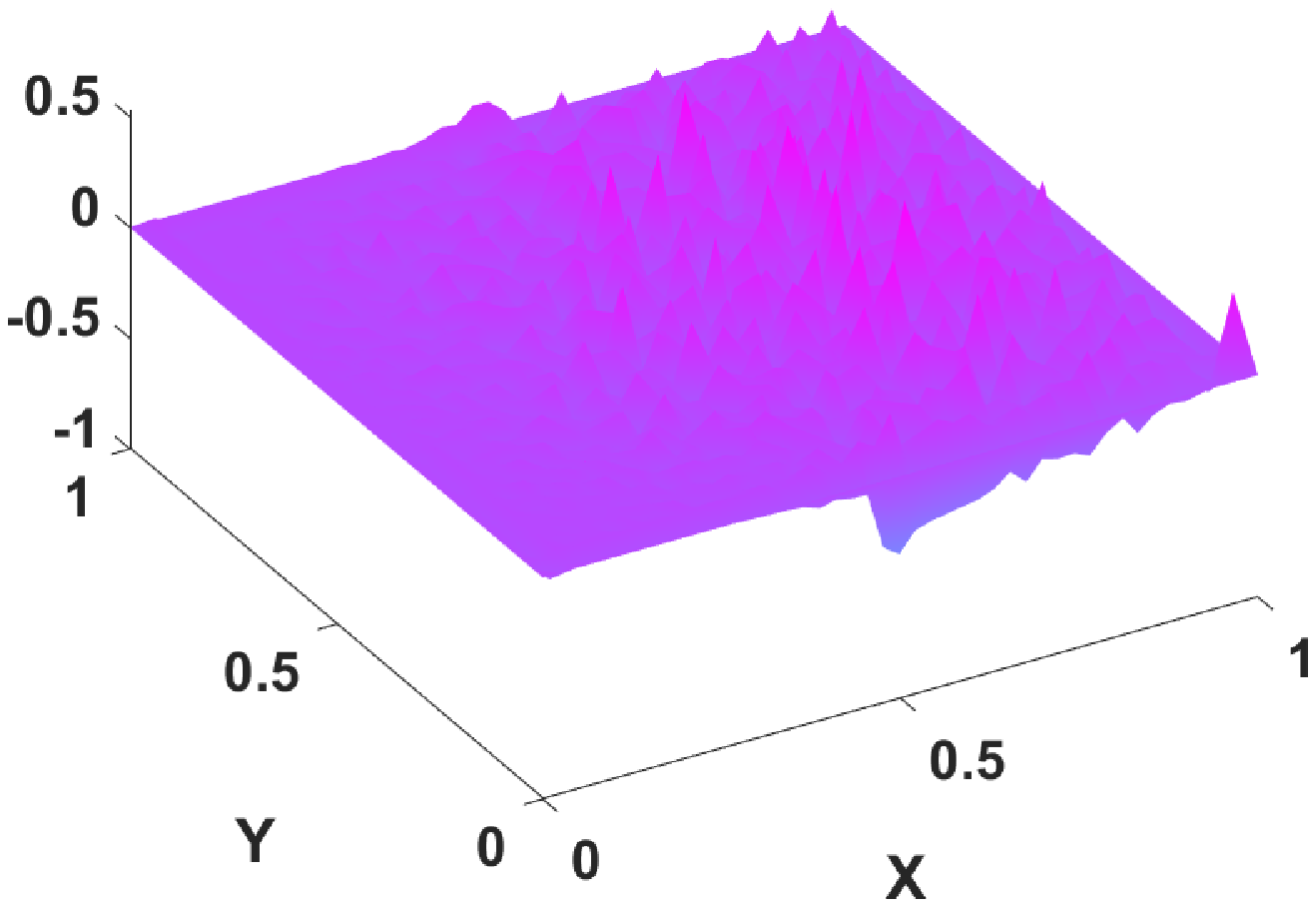}
\caption{$\delta = 5e-3$.}
\end{subfigure}
\begin{subfigure}{.24\textwidth}
\centering
\includegraphics[scale=0.25]{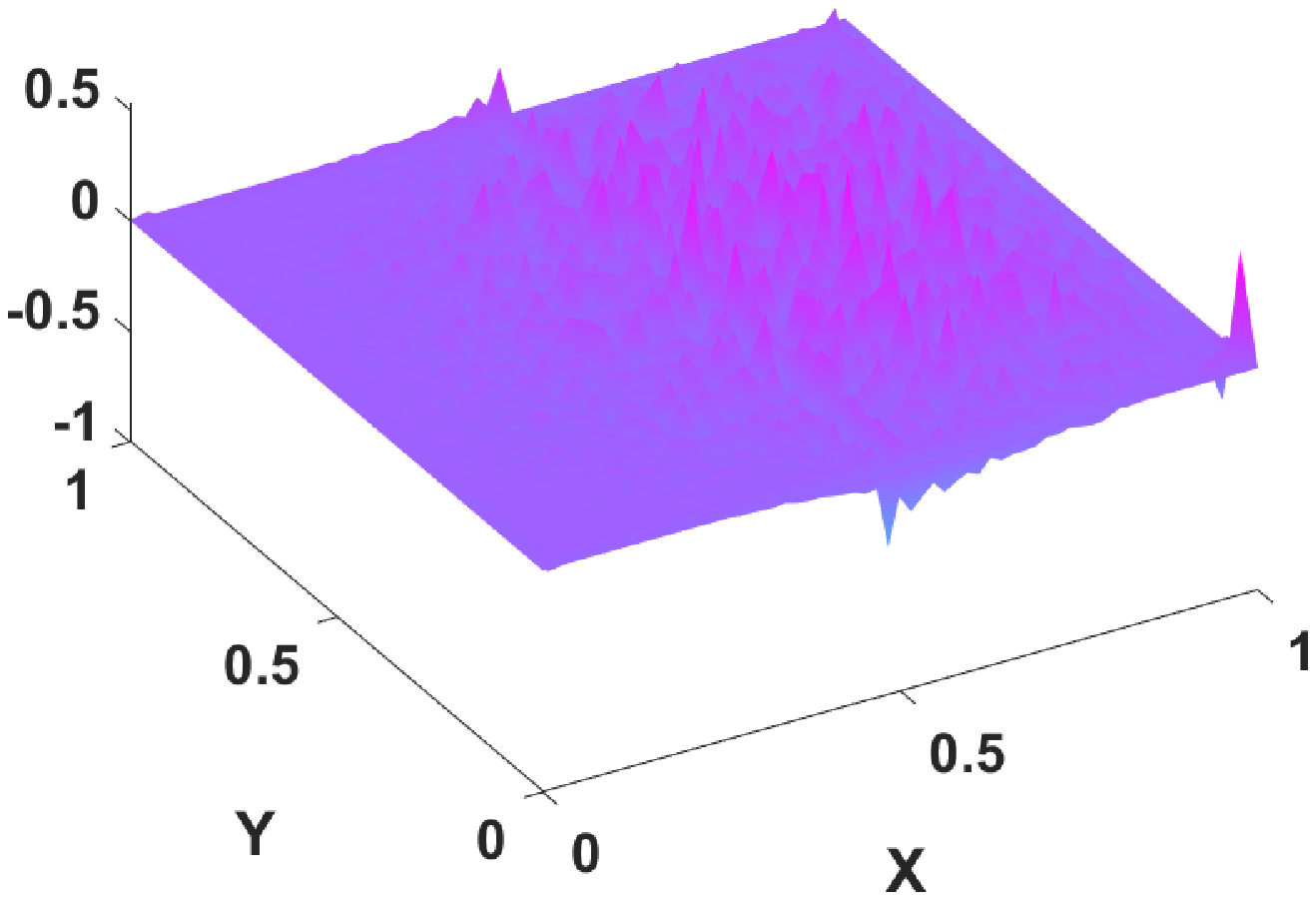}
\caption{$\delta= 2.5e-3$.}
\end{subfigure}
\caption{Top left: Exact initial data $u_0$. Recover with $a_1(x,t)$, $\alpha =0.5$, $T=1$. The remain three columns are profiles of numerical reconstructions $\tU_0$ and theirs errors, with $h=\sqrt{\delta}$, $\tau = \delta^{0.2}/20$, $\gamma=\delta^{0.8}/200$.}
\label{fig:profile:case3}
\end{figure} 
\paragraph{\textbf{Example violating Assumption \ref{ass:large-t}.}}
We also test the following diffusion coefficient
\begin{equation*}
a_2(x,y,t) = \begin{pmatrix}
e^{-x}\cos(t)+2 & (1.5-(t+1)^{-0.2})/10\\
(1.5-(t+1)^{-0.2})/10 & \cos(\pi y)\sin(t)+2
\end{pmatrix}.
\end{equation*}
Note that $a_2$ satisfies conditions \eqref{Cond-1-t} and \eqref{Cond-2-t}, but 
Assumption \ref{ass:large-t} is not fullfilled. 

Numerical experiments show that the numerical reconstruction via the fully discrete scheme \eqref{eqn:back-fully} 
still converges under proper parameter choices.
For example, we test the smooth initial data $u_0 = \sin(2\pi x)\sin(2\pi y)$ and large terminal time $T=10$. 
We choose $\gamma,h,\tau\sim\sqrt{\delta}$, and observe a convergence rate around $O(\sqrt{\delta})$, see cf. Figure \ref{fig:fully:err:case2}.
We will continue to consider the general case in our future studies.
\begin{figure}
\centering
\includegraphics[scale=0.5]{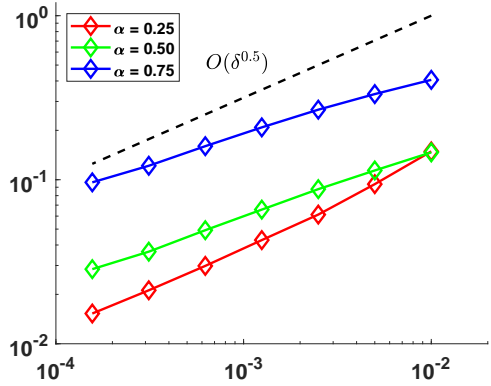}
\caption{Plot of error: $a_2(x,t)$ and smooth initial data; $T=10$, with $h=\sqrt{\delta}$, $\tau = \delta^{0.5}/5$, $\gamma = \delta^{0.5}/350$ for $\alpha=0.25, 0.5$ and $\gamma = \delta^{0.5}/150$ for $\alpha=0.75$.}
\label{fig:fully:err:case2}
\end{figure}
%

\section*{Acknowledgement}
This work is partly supported by Hong Kong Research Grants Council (Project No. 15304420) and 
an internal grant of Hong Kong Polytechnic University (Project ID: P0031041, Work Programme: ZZKS).

\section*{Appendix}\label{sec:appendix} \textbf{Appendix A. Proof of Lemma \ref{lem:cond-AT}} \label{append-A}\vskip5pt
For $p=0$,  Conditions \eqref{Cond-1-t} and \eqref{Cond-2-t} and Assumption \ref{ass:large-t} imply
\begin{align*}
\|(A(t)-A(s))v\|_{L^2\II} &\le c\big(\|\nabla(a(t)-a(s))\|_{L^\infty\II}+\|a(t)-a(s)\|_{L^\infty\II}\big)\|v\|_{\dH2}\\
&\le c \min(1,\min(t,s)^{-\kappa}|t-s|)\|v\|_{\dH 2}.
\end{align*}
For $p=-2$, from using the duality argument, we have 
\begin{align*}
\|(A(t)-A(s))v\|_{\dH{-2}} &= \sup_{\fy\in \dH 2} \frac{\langle(A(t)-A(s))v,\fy\rangle}{\|\fy\|_{\dH 2}} = \sup_{\fy\in \dH 2} \frac{(v,(A(t)-A(s))\fy)}{\|\fy\|_{\dH 2}}  \\
&  \le   \sup_{\fy\in \dH 2}  \frac{\| v \|_{L^2\II} \| (A(t)-A(s))\fy  \|_{L^2\II}}{\|\fy\|_{\dH 2}}\\
&\le  c \min(1,\min(t,s)^{-\kappa}|t-s|)  \sup_{\fy\in \dH 2}\| v \|_{L^2\II}
\end{align*}
This completes the proof of the lemma.\vskip15pt

\textbf{Appendix B. Proof of Lemma \ref{lem:ful:a-priori}} \label{append-B}

Recalling the fact that \cite[Lemma 3.3]{JinZhou:2021sicon}
$$ U_h^n\bDal U_h^n  \geq \frac 12 \bDal  |U_h^n|^2 $$ 
Therefore like Lemma \ref{lemma:prior-estimate} we define an operator $\underline{A_h}= -c_0 \Delta_h$.
Condition \ref{Cond-1-t} gives that  the operator $A_h(t) - \underline{A_h}$
is selfadjoint and positive semidefinite for all $n\ge 1$. Rewrite the equation  \eqref{eqn:fully} as
$$  \bDal (U_h^n-U_h^0) +  \underline{A_h} U_n =  (\underline{A_h}-A_h(t)) U_h^n  \quad \text{for all}~~  1\le n\le N.  $$
Taking inner product with $U_h^n$ on the above equation and by definition of $-\Delta_h$ and $A_h(t)$ , we obtain
$$  (\bDal (U_h^n-U_h^0), U_h^n) +  c_0 \| \nabla U_h^n \|_{L^2\II}^2
=  \big((c_0-a(\cdot,t)) \nabla U_h^n, \nabla U_h^n\big)\le 0 \quad \text{for all}~~  1\le n\le N.  $$
Using the above inequality and Poincar\'e inequality we arrive at\begin{align*}
&\bDal(\|U_h^n\|_{L^2\II}-\|U_h^0\|_{L^2\II})+c\|U_h^n\|_{L^2\II}\\
&\le   \bDal\Big[( \|U_h^n\|_{L^\II}-\|U_h^0\|_{L^2\II})(1+\|U_h^0\|_{L^2\II}/\|U_h^n\|_{L^2\II}) \Big] +  c  \|U_h^n\|_{L^2\II}   \\
&\le 0 \quad \text{for all}~~ n\ge 1, 
\end{align*} 
for some constant $c$ uniform in $t_n$. Then  the comparison principle for discrete fractional ODEs \cite{LiWang:2019} leads to
$$  \|U_h^n\|_{L^2\II} \le F_\tau^n(c)\| U_h^0 \|_{L^2\II} \le \frac{c}{1+ct_n^\alpha} \| U_h^0\|_{L^2\II}. $$
where the definition of  $F_\tau^n(c)$  can be found in \cite{ZhangZhou:2020, ZhangZhou:2021}.  This immediately leads to the desired result.

Next by solution representation  \eqref{eqn:repre-U} we have 
\begin{align*}
&\|A_h(t_{n_*})U_h^n\|_{L^2\II}\\
& \le  \|A_h(t_{n_*})F_{h,\tau}^n (n_*)U_h^0\|_{L^2\II} + \tau\| \sum_{k=1}^n A_h(t_{n_*})E_{h,\tau}^{n-k} (n_*)(I-A_h(t_k)A_h(t_{n_*})^{-1})A_h(t_{n_*})U_h^k\|_{L^2\II}\\
&\le c t_n^{-\al} \|U_h^0\|_{L^2\II} +   \sum_{k=1}^n \|\tau A_h(t_{n_*})E_{h,\tau}^{n-k} (n_*)(I-A_h(t_k)A_h(t_{n_*})^{-1})\|\,\|A_h(t_{n_*})U_h^k\|_{L^2\II},
\end{align*}
lemma \ref{lem:op:fully} and \ref{lem:cond-Ah} show that  
\begin{align*}
\|A_h(t_{n_*})U_h^n\|_{L^2\II}\le c t_n^{-\al} \|U_h^0\|_{L^2\II} +   \sum_{k=1}^n c\tau\|A_h(t_{n_*})U_h^k\|_{L^2\II},
\end{align*}
the discrete version of Gronwall's inequality \cite[Lemma 10.5]{Thomee:2006} gives that 
\begin{equation*}
\|A_h(t_{n_*})U_h^n\|_{L^2\II}\le c \exp(ct_n) t_n^{-\al} \|U_h^0\|_{L^2\II} 
\end{equation*}
here $c$ is uniform in $n$ , $\tau$ and $t_n$.

Meanwhile, in the other hand  $\| I - A(t_*)^{-1} A(s) \| \le c |t_*-s|^{\beta}$ for any $\beta \in [0,1]$.
Then if $\beta=(1+\epsilon)\alpha$ with $\epsilon\in(0,1/\alpha-1)$ we can derive that 
\begin{align*}
&\|A_h(t_{n_*})U_h^n\|_{L^2\II}\\
& \le  \|A_h(t_{n_*})F_{h,\tau}^n (n_*)U_h^0\|_{L^2\II} + \tau\| \sum_{k=1}^n A_h(t_{n_*})E_{h,\tau}^{n-k} (n_*)(I-A_h(t_k)A_h(t_{n_*})^{-1})A_h(t_{n_*})U_h^k\|_{L^2\II}\\
&\le c t_n^{-\al} \|U_h^0\|_{L^2\II} +   \sum_{k=1}^n \|\tau A_h^2(t_{n_*})E_{h,\tau}^{n-k} (n_*)(I-A_h(t_{n_*})^{-1}A_h(t_k))\|\,\|U_h^k\|_{L^2\II}\\
&\le c t_n^{-\al} \|U_h^0\|_{L^2\II} +c\tau \sum_{k=1}^n (t_{n_*}-t_k)^{-1+\varepsilon \al} t_k^{-\al} \|U_h^0\|_{L^2\II}\\
&\le  c t_n^{-\al} \|U_h^0\|_{L^2\II} +c\int_0^{t_n}(t_{n_*}-s)^{-1+\epsilon \al}s^{-\al}\d s \|U_h^0\|_{L^2\II}\le ct_n^{-(1-\epsilon)\al} \|U_h^0\|_{L^2\II}
\end{align*}

\bibliographystyle{abbrv}

\end{document}